\theoremstyle{plain}
\newtheorem{theorem}{Theorem}[section]
\newtheorem{lemma}[theorem]{Lemma}
\newtheorem{proposition}[theorem]{Proposition}
\newtheorem{corollary}[theorem]{Corollary}
\theoremstyle{remark}
\newtheorem{definition}{Definition}
\newtheorem{remark}{Remark}[section]
\begin{document}
\setcounter{section}{0}

\thispagestyle{empty}

\newcommand{\QQ}{\mathbb{Q}}
\newcommand{\RR}{\mathbb{R}}
\newcommand{\ZZ}{\mathbb{Z}}
\newcommand{\NN}{\mathbb{N}}
\newcommand{\Nor}{\mathscr{N}}
\newcommand{\CC}{\mathbb{C}}
\newcommand{\HH}{\mathbb{H}}
\newcommand{\EE}{\mathbb{E}}
\newcommand{\Var}{\operatorname{Var}}
\newcommand{\PP}{\mathbb{P}}
\newcommand{\Rd}{\mathbb{R}^d}
\newcommand{\Rn}{\mathbb{R}^n}
\newcommand{\XX}{\mathcal{X}}
\newcommand{\YY}{\mathcal{Y}}
\newcommand{\MM}{\FF}
\newcommand{\BHH}{\overline{\mathbb{H}}}
\newcommand{\XB}{( \mathcal{X},\mathscr{B} )}
\newcommand{\BB}{\mathscr{B}}
\newcommand{\system}{(\Omega,\mathcal{F},\mu,T)}
\newcommand{\FF}{\mathcal{F}}
\newcommand{\MBS}{(\Omega,\mathcal{F})}
\newcommand{\MBSE}{(E,\mathscr{E})}
\newcommand{\MS}{(\Omega,\mathcal{F},\mu)}
\newcommand{\PS}{(\Omega,\mathcal{F},\mathbb{P})}
\newcommand{\LDP}{LDP(\mu_n, r_n, I)}
\newcommand{\Def}{\overset{\text{def}}{=}}
\newcommand{\Series}[2]{#1_1,\cdots,#1_#2}
\newcommand{\independent}{\perp\mkern-9.5mu\perp}
\def\avint{\mathop{\,\rlap{-}\!\!\int\!\!\llap{-}}\nolimits}

\author[Shuo Qin]{Shuo Qin}
\address[Shuo Qin]{Courant Institute of Mathematical Sciences \& NYU-ECNU Institute of Mathematical Sciences at NYU Shanghai}
\email{sq646@nyu.edu}

\author[Pierre Tarrès]{Pierre Tarrès}
\address[Pierre Tarrès]{Courant Institute of Mathematical Sciences \& NYU-ECNU Institute of Mathematical Sciences at NYU Shanghai}
\email{tarres@nyu.edu}

\title{CONTINOUS-TIME VERTEX-REINFORCED RANDOM WALKS ON COMPLETE-LIKE GRAPHS}
\date{}

  \begin{abstract}
    We introduce the continuous-time vertex-reinforced random walk (cVRRW) as a continuous-time version of the vertex-reinforced random walk (VRRW), which might open a new perspective on the
    study of the VRRW.
    
    It has been proved by Limic and Volkov \cite{MR2759737} that for the VRRW on a complete-like graph $K_d \cup \partial K_d$, the asymptotic frequency of visits is uniform over the non-leaf vertices. We give short proofs of those results by establishing a stochastic approximation result for the cVRRW on complete-like graphs. We also prove that almost surely, the number of visits to each leaf up to time $n$ divided by $n^{\frac{1}{d-1}}$ converges to a non-zero limit. We solve a conjecture by Limic and Volkov \cite{MR2759737} on the rate of convergence in the case of the complete graph.
  \end{abstract}
\maketitle

\section{General introduction}

\subsection{Definitions and main results}
Let $G=(V, E)$ be a locally finite connected graph. Each vertex $i \in V$ is equipped with a positive weight $W_i$. We write $i \sim j$ if $(i, j) \in E$. We will sometimes abuse notation, denote the vertex set $V$ by $G$, and identify arbitrary subset $R$ of $G$ to the corresponding subgraph $(R, \sim)$. The outer boundary of $R$ is denoted by
$\partial R:=\{j \in G \backslash R: j \sim R\}$.

Let $X=\left(X_{t}\right)_{t \geq 0}$ be a continuous-time process defined on a probability space $\PS$ and taking values in $V$. For $i \in V$, let $T_i(t)$ be the time $X$ spends at $i$ up to time $t$, that is,
$$
T_{i}(t):=\int_{0}^{t} \mathds{1}_{\{X_{u}=i\}} d u
$$ 
It is often called the local time process of $X$ at $i$. Denote by $\mathbf{T}(t)=(T_i(t))_{i \in V}$ the vector of local times of $X$ at time $t$. We write $T_i(\infty):=\lim_{t \to \infty}T_i(t)$.

For any $\mathbf{T}=\left(T_{i}\right)_{i \in V} \in \RR^V$ and $j \in V$, let $N_{j}(\mathbf{T}):=\sum_{k:k \sim j} T_{k}$. When $G$ is finite, we define
\begin{equation}
  \label{statidisHt}
 \pi_i(\mathbf{T}):=\frac{W_ie^{N_i(\mathbf{T})}}{\sum_{x \in V} W_xe^{N_x(\mathbf{T})}} , \quad i\in V
\end{equation}

\begin{definition}[cVRRW]
  \label{defcvrrw}
  $\left(X_{t}\right)_{t \geqslant 0}$ is called a continuous-time vertex-reinforced random walk (cVRRW) on $G$ with weights $\left(W_{i}\right)_{i \in V}$ and starting point $v_{0} \in V$ if $X_{0}=v_{0}$ and if, at time $t$, $X_t=i\in V$, conditional on the past, it jumps from $i$ to a neighbor $j$ of $i$ at rate $W_{j} e^{N_{j}(\mathbf{T}(t))}$. 
\end{definition}

We often omit “with weights $\left(W_{i}\right)_{i \in V}$" to simplify expressions. The definition of the cVRRW is motivated by its connection to the vertex-reinforced random walk introduced by Diaconis and Pemantle \cite{pemantle1988thesis}.
 
 \begin{definition}[VRRW]
   Given a positive sequence $(a_i)_{i\in V}$, a discrete-time random walk $\left(X_{n}\right)_{n \in \mathbb{N}}$ on $G$ is called a vertex-reinforced random walk (VRRW) with starting point $v_{0} \in V$ and initial local times $(a_i)_{i\in V}$ if $X_{0}=v_{0}$ and, for all $n \in \mathbb{N}$, 
   $$
 \mathbb{P}\left(X_{n+1}=j \mid \mathcal{F}_{n}\right)=\mathds{1}_{\left\{j \sim X_{n}\right\}} \frac{ Z_j(n)}{\sum_{k \sim X_{n}}  Z_k(n)}
 $$
 where $\mathcal{F}_{n}=\sigma\left(X_{0}, \ldots, X_{n}\right)$ and $ Z_i(n):=a_i+\sum_{m=0}^{n} \mathds{1}_{\left\{X_{m}=i\right\}}$.
 \end{definition}

 The observation that the VRRW is a mixture of cVRRWs was first made by Tarr{\`e}s in Lemma 4.7 \cite{tarres2011localization}.
 
 \begin{proposition}[\cite{tarres2011localization}]
   \label{vrrwmixcvrrw}
  A VRRW with initial local times $(a_i)_{i\in V}$ is equal in law to the discrete-time process associated with a cVRRW with independent random weights $W_{i} \sim \operatorname{Gamma}\left(a_{i}, 1\right) .$
 \end{proposition}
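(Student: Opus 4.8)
The plan is to study the embedded jump chain of the cVRRW and identify it, via an exponential--clock (Rubin-type) construction, with the VRRW. Throughout, let $0=\tau_0<\tau_1<\cdots$ denote the successive jump times of the cVRRW and $Y_n:=X_{\tau_n}$ the associated discrete-time process; since each holding time is a.s.\ finite and there are a.s.\ infinitely many jumps, $(Y_n)_{n\ge 0}$ is well defined, and the claim is that its law equals that of the VRRW.

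\textbf{Step 1: the embedded chain.} The first observation is that while $X$ sits at a vertex $i$ on an interval $[\tau_m,\tau_{m+1})$, only $T_i$ increases, and at unit speed; hence $\tfrac{d}{dt}N_j(\mathbf T(t))=\mathds 1_{\{X_t\sim j\}}=1$ for every neighbour $j$ of $i$, so $N_j(\mathbf T(t))=N_j(\mathbf T(\tau_m))+(t-\tau_m)$ and all the jump rates $W_je^{N_j(\mathbf T(t))}$ out of $i$ carry the common time factor $e^{t-\tau_m}$. After the time change $s=e^{t-\tau_m}-1$ the competing clocks become homogeneous Poisson with rates $\beta_j(m):=W_je^{N_j(\mathbf T(\tau_m))}$, so, conditionally on the past and on $\{Y_m=i\}$: (a) the next vertex is $j$ with probability $\beta_j(m)/\sum_{k\sim i}\beta_k(m)$, independently of the holding time; and (b) $\big(\sum_{k\sim i}\beta_k(m)\big)\big(e^{\tau_{m+1}-\tau_m}-1\big)$ is standard exponential, independent of the chosen $j$. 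Consequently $\beta_j(0)=W_j$, and $\beta_j(m+1)=\beta_j(m)e^{\tau_{m+1}-\tau_m}$ when $j\sim Y_m$, $\beta_j(m+1)=\beta_j(m)$ otherwise, and $\mathbb P\big(Y_{n+1}=j\mid Y_0,\dots,Y_n\big)=\mathbb E\big[\beta_j(n)\big/\sum_{k\sim Y_n}\beta_k(n)\;\big|\;Y_0,\dots,Y_n\big]$. So it suffices to compute these conditional expectations and match them with $Z_j(n)\big/\sum_{k\sim Y_n}Z_k(n)$.

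\textbf{Step 2: Rubin-type clocks.} Because $\tfrac{d}{du}N_j(\mathbf T(u))=\mathds 1_{\{X_u\sim j\}}$, the cumulative intensity for jumping into $j$ up to time $t$ is $\int_0^t W_je^{N_j(\mathbf T(u))}\mathds 1_{\{X_u\sim j\}}\,du=W_j\int_0^t e^{N_j}\,dN_j=W_j\big(e^{N_j(\mathbf T(t))}-1\big)=\beta_j(t)-W_j$. Hence the cVRRW can be realized by taking, for each vertex $j$, an independent i.i.d.\ $\operatorname{Exp}(1)$ sequence $(E_j^{(k)})_{k\ge 1}$, independent of $(W_j)$, and letting the $k$-th jump into $j$ occur exactly when $\beta_j(t)$ first reaches $\Gamma_j^{(k)}:=W_j+E_j^{(1)}+\cdots+E_j^{(k)}\sim\operatorname{Gamma}(a_j+k,1)$, with $\Gamma_j^{(0)}:=W_j$; memorylessness makes this a genuine construction and shows that, at any step $n$, for each $j\sim Y_n$ the ``remaining fuel'' $\Gamma_j^{(L_j(n)+1)}-\beta_j(n)$ is a fresh $\operatorname{Exp}(1)$ given the past, where $L_j(n)$ is the number of jumps into $j$ so far; this is consistent with Step 1 and makes the partial sums of independent $\operatorname{Exp}(1)$'s (shifted by $W_j$) the natural $\operatorname{Gamma}(a_j+k,1)$ ``levels''.

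\textbf{Step 3: identification with the VRRW, and the main obstacle.} I would conclude by induction on $n$, proving that $\mathbb P(Y_{n+1}=\cdot\mid Y_0,\dots,Y_n)$ equals the VRRW transition probability. This requires controlling the conditional joint law, given the realized path $Y_0=x_0,\dots,Y_n=x_n$, of the vector $(\beta_j(n))_j$. That law is \emph{not} simply a product of $\operatorname{Gamma}$ distributions: $\beta_j(n)$ has in general drifted above the last level $\Gamma_j^{(L_j(n))}$ it crossed (it keeps growing while $X$ visits neighbours of $j$), and conditioning on the path reweights the coordinates. The bulk of the work is to show that the conditional law is nonetheless of a form for which renormalizing $(\beta_j(n))_{j\sim x_n}$ reproduces exactly the P\'olya-urn weights $Z_j(n)=a_j+\#\{0\le\ell\le n:x_\ell=j\}$. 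The tools are: the Dirichlet-marginal identity $\mathbb E\big[G_j/\sum_{k\in S}G_k\big]=b_j/\sum_{k\in S}b_k$ for independent $G_k\sim\operatorname{Gamma}(b_k,1)$; the fact that size-biasing $\operatorname{Gamma}(b,1)$ yields $\operatorname{Gamma}(b+1,1)$ and that, among independent Gammas, size-biasing one coordinate leaves it conditionally independent of the others; and the memorylessness from Step 2, which feeds a fresh $\operatorname{Exp}(1)$ into the drifted coordinate after each jump into it. I expect the delicate point---the main obstacle---to be the bookkeeping of this ``overshoot'': one must verify that after conditioning on the trajectory it is distributed precisely so as to restore the clean urn structure, and in particular that the starting vertex $v_0$---which is visited at time $0$ but never jumped into then, so $\beta_{v_0}(0)=W_{v_0}\sim\operatorname{Gamma}(a_{v_0},1)$ carries no ``$+1$''---ends up with the correct weight $a_{v_0}+\#\{0\le\ell\le n:x_\ell=v_0\}$, the extra unit being supplied through the drift accumulated during the first sojourn. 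Matching the initial distributions then completes the identification of the two laws.
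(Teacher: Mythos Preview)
Your Steps~1 and~2 are correct and essentially reproduce the paper's timelines construction: Step~1 is the embedded-chain computation, and Step~2 is Corollary~\ref{wevexpsum} (your levels $\Gamma_j^{(k)}=W_j+E_j^{(1)}+\cdots+E_j^{(k)}$ are exactly $W_j e^{\tilde\Theta_k^{(j)}}$ there). Where you diverge from the paper is in Step~3, and the route you choose is genuinely harder and left incomplete.

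The paper does not try to compute the conditional law of $(\beta_j(n))_j$ given the path. Instead it builds a \emph{second} timelines representation, now for the VRRW (Lemma~\ref{lemtimelinevrrw}): the jump chain of the process with alarm times $\Theta_k^{(i)}=\sum_{l=0}^{k-1}\xi_l^{(i)}/(a_i+l)$ is the VRRW, so in particular the VRRW alarm increments are independent $\operatorname{Exp}(a_i+k-1)$. The identification is then a one-line Beta--Gamma computation: since $\Gamma_j^{(k-1)}\sim\operatorname{Gamma}(a_j+k-1,1)$ and $E_j^{(k)}\sim\operatorname{Exp}(1)$ are independent, $\Gamma_j^{(k-1)}/\Gamma_j^{(k)}\sim\operatorname{Beta}(a_j+k-1,1)$ is independent of $\Gamma_j^{(k)}$, whence the cVRRW alarm increments
\[
\tilde\Theta_k^{(j)}-\tilde\Theta_{k-1}^{(j)}=\log\Gamma_j^{(k)}-\log\Gamma_j^{(k-1)}=-\log\bigl(\Gamma_j^{(k-1)}/\Gamma_j^{(k)}\bigr)
\]
are independent $\operatorname{Exp}(a_j+k-1)$ as well. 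Thus the two alarm arrays $(\Theta_k^{(i)})$ and $(\tilde\Theta_k^{(i)})$ have the same joint law, and since the jump chain is a deterministic functional of the alarms, the discrete-time processes coincide. No overshoot bookkeeping is needed.

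By contrast, your Step~3 aims to show directly that $\mathbb E\bigl[\beta_j(n)/\sum_{k\sim x_n}\beta_k(n)\mid Y_0=x_0,\dots,Y_n=x_n\bigr]=Z_j(n)/\sum_{k\sim x_n}Z_k(n)$. This can be made to work, but the invariant you would need to carry through the induction is not simply ``the $\beta_j(n)$ are independent $\operatorname{Gamma}(Z_j(n),1)$'' (as you note, the starting vertex already breaks this at $n=0$), and the conditional law of the drifted coordinates given only the path is genuinely intricate: it mixes the holding-time randomness with the selection bias from the observed transitions. The tools you list (Dirichlet marginals, Gamma size-biasing, memorylessness) are the right ingredients, but you have not stated the invariant nor closed the induction, and you correctly identify this as the main obstacle. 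The paper's alarm-matching argument sidesteps it entirely; you are in fact one observation away from it, since your $\Gamma_j^{(k)}$ from Step~2 already have the required log-increments.
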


 Volkov \cite{volkov2001vertex} conjectured that for a broad class of triangle-free graphs with bounded degree, the VRRW a.s. localizes on a complete $d$-partite graph with possible leaves (see Definition \ref{defEs}). This is our major motivation to investigate the asymptotic behavior of the cVRRW and the VRRW on complete-like graphs or complete $d$-partite graphs with possible leaves: Once that conjecture is proved, the results in this paper could be generalized to a large class of graphs.

A graph $G$ is called a complete-like graph if $G=S\cup \partial S$ where $S=K_d$ $(d\geq 3)$, i.e. complete graph on $d$ vertices, and any $j\in \partial S$ has exactly one neighbor.  We denote its neighbor by $n(j) \in S$. $j$ is then called a leaf of $n(j)$. Note that $\partial S = \emptyset$ is possible, i.e. $G=K_d$. For the cVRRW on a complete-like graph, the main result is the following.

\begin{theorem}
  \label{cvrrwcomlike}
  Let $X$ be a cVRRW on a complete-like graph $G=S \cup \partial S$ with $S=K_d$. Then almost surely, 
  \begin{enumerate}[topsep=0pt, partopsep=0pt, label=(\roman*)]
   \item  $T_j(\infty)<\infty$ and $\lim_{t\to \infty}\pi_j(\mathbf{T}(t))=0$ for all $j\in \partial S$
\item for any $i\in S$, $ \lim_{t\to \infty}\pi_i(\mathbf{T}(t))=\frac{1}{d}$ and $T_i(t)-t/d$ converges.
  \end{enumerate}
\end{theorem}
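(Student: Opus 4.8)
The plan is to reduce the continuous-time dynamics to a stochastic approximation scheme read off at the jump times, analyse the associated averaged ODE, and then translate the conclusion back into statements about the local times. First I would record a structural identity special to complete-like graphs. Set $\sigma_i(t):=W_ie^{N_i(\mathbf{T}(t))}$, so that $M(t):=\sum_{x\in V}\sigma_x(t)$ satisfies $\pi_i(\mathbf{T}(t))=\sigma_i(t)/M(t)$ and the jump rate out of $X_t$ equals $\sum_{x\sim X_t}\sigma_x(t)$. Since $\frac{d}{dt}N_k(\mathbf{T}(t))=\mathds{1}_{\{X_t\sim k\}}$, while $X_t=l$ the weight $\sigma_l$ is constant (no self-loops), each $\sigma_k$ with $k\sim l$ grows at rate $\sigma_k$, and all other $\sigma_k$ stay constant; consequently $\frac{d}{dt}M(t)$ equals the total jump rate, so in the time-change $\theta:=M(t)-M(0)$ the jumps form a rate-one Poisson process. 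Writing $s_0<s_1<\cdots$ for the jump times, $Y_n:=X_{s_n}$, $M_n:=M(s_n)$ and $q^{(n)}:=(\sigma_i(s_n)/M_n)_{i\in V}$, this yields an explicit recursion: $M_{n+1}=M_n+G_n$ with $(G_n)$ i.i.d.\ $\mathrm{Exp}(1)$ and $G_n$ independent of $Y_{n+1}$ given the past, the total $\sigma$-mass on the neighbours of $Y_n$ is raised by $G_n$ and redistributed proportionally, the remaining coordinates are unchanged, and $Y_{n+1}$ is chosen among the neighbours of $Y_n$ proportionally to $q^{(n)}$. Since $M_n/n\to 1$ a.s., a Taylor expansion turns this into a Robbins--Monro scheme
\[
q^{(n+1)}=q^{(n)}+\frac{1}{M_n}\,F\bigl(q^{(n)},Y_n\bigr)+O\bigl(M_n^{-2}\bigr),\qquad F_k(q,l)=q_k\Bigl(\frac{\mathds{1}_{\{k\sim l\}}}{\bar q_l}-1\Bigr),\quad \bar q_l:=\sum_{k\sim l}q_k,
\]
with step size $\asymp 1/n$ and ``fast'' driving chain $(Y_n)$.

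Next I would study the averaged ODE. With $q$ frozen, $(Y_n)$ is the irreducible chain on $V$ that jumps from $l$ to $j\sim l$ with probability $q_j/\bar q_l$; let $\nu_q$ be its stationary law and $h(q):=\sum_l\nu_q(l)F(q,l)$ the averaged field. The points to establish, all by short computations, are: (a) $q^*:=(1/d,\dots,1/d,0,\dots,0)$, uniform on $S=K_d$ and zero on $\partial S$, is a zero of $h$; (b) for a leaf $j$ with neighbour $i=n(j)$ one has $h_j(q)=q_j\bigl(\nu_q(i)/\bar q_i-1\bigr)$, equal near $q^*$ to $q_j\bigl(\tfrac{1}{d-1}-1\bigr)=-\tfrac{d-2}{d-1}q_j<0$ because $d\ge 3$; and (c) once the leaf coordinates are negligible the ODE restricted to $S$ is the symmetric system $\dot q_i=q_i\bigl(\sum_{l\in S\setminus\{i\}}\nu_q(l)/(1-q_l)-1\bigr)$, for which the only equilibrium with all coordinates positive is the uniform one --- imposing $h|_S\equiv 0$ forces $\nu_q(i)/(1-q_i)$ to be independent of $i$, after which stationarity of $\nu_q$ forces $q_i\equiv 1/d$ --- while the equilibria on the boundary of the simplex are linearly repelling; hence $q^*$ is the unique attractor.

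With these ingredients, the proof of Theorem~\ref{cvrrwcomlike} proceeds as follows. I would first show $\sum_{j\in\partial S}q^{(n)}_j\to 0$: using (b) and the fact that the symmetric contracting structure on $S$ keeps $\min_{i\in S}q^{(n)}_i$ bounded away from $0$ and $\max_{i\in S}q^{(n)}_i$ bounded away from $1$, the quantity $\sum_{j\in\partial S}q^{(n)}_j$ has conditional drift bounded above by $-c\,M_n^{-1}\sum_{j\in\partial S}q^{(n)}_j$ once it is small, which forces it to $0$, indeed at a polynomial rate. This gives part~(i) for the $\pi_j$; moreover, since the length of the $n$-th visit to a leaf $j$ equals $\log\bigl(1+G_n/\sigma_{n(j)}(s_n)\bigr)\le G_n/(M_nq^{(n)}_{n(j)})$ with $q^{(n)}_{n(j)}$ bounded below, summing over visits and using the polynomial decay of $q^{(n)}_j$ gives $\sum_n\mathds{1}_{\{Y_n=j\}}(s_{n+1}-s_n)<\infty$ a.s., i.e.\ $T_j(\infty)<\infty$. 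Then, the leaves being negligible, I would invoke a standard convergence theorem for stochastic approximation with Markov dynamics --- together with the non-convergence-to-unstable-equilibria principle to exclude the repelling boundary points --- to conclude $q^{(n)}_i\to 1/d$ for $i\in S$; a short interpolation between consecutive jump times upgrades this to $\pi_i(\mathbf{T}(t))\to 1/d$. Finally I would translate back: $\pi_i(\mathbf{T}(t))\to 1/d$ for all $i\in S$ yields $\sigma_i(t)/\sigma_{i'}(t)\to 1$, i.e.\ $N_i(\mathbf{T}(t))-N_{i'}(\mathbf{T}(t))\to\log(W_{i'}/W_i)$; combining this with $\sum_{k\in S}T_k(t)=t-\sum_{j\in\partial S}T_j(t)=t-O(1)$ and the identity $N_i(\mathbf{T}(t))=t-T_i(t)-\sum_{j\in\partial S:\,n(j)\ne i}T_j(t)$ shows that each difference $T_i(t)-T_{i'}(t)$ converges, hence so does $T_i(t)-t/d$.

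The step I expect to be the main obstacle is the convergence of the $S$-coordinates together with the simultaneous control of the leaf coordinates: one has to run the stochastic-approximation machinery for a scheme whose noise is a Markov chain whose transition kernel itself depends on the slow variable, confine the trajectory to a region where the averaged field behaves well (in particular keep $\min_{i\in S}q^{(n)}_i$ from degenerating and prevent $\sum_{j\in\partial S}q^{(n)}_j$ from staying bounded away from $0$), and exclude the unstable boundary equilibria. The high degree of symmetry of $K_d$, which makes the uniform measure the unique attractor of the limiting ODE, is what renders this manageable and accounts for the brevity of the argument.
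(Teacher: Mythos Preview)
Your route is genuinely different from the paper's. You discretise at jump times, set up a Robbins--Monro scheme with state-dependent Markovian noise $(Y_n)$, compute the averaged field (which is, up to time change, the replicator dynamics $h_k(q)=q_k\bigl(N_k(q)/H(q)-1\bigr)$), and plan to conclude via general stochastic-approximation-with-Markov-noise theorems plus a non-convergence-to-unstable-equilibria principle. The paper instead works entirely in continuous time: it first disposes of the leaves by a direct timelines/martingale argument (Propositions~\ref{proptimeline1} and~\ref{propeFtwi}) that uses only the crude bound $\PP(B_n\mid\FF_{\sigma_n})\le W_j/\sum_{x\in S:x\sim n_j}W_x$ and needs no information on how mass is distributed within $S$; it then applies Proposition~\ref{propfTstoapp} with the single test function $V(\mathbf{T})=\tfrac{1}{d}\sum_{i\in S}\log\pi_i(\mathbf{T})$, bounding the error terms explicitly through the hitting-time estimates on $Q(\mathbf{T})$ in Lemma~\ref{mjpcomplike}. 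Since $-\log d-V\ge 0$ and the drift $1-\tfrac{1}{d}-H\ge 0$, convergence of the error terms forces $\pi\to z^*$ without any appeal to an abstract limit-set theorem.

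Your outline has a real gap precisely where you flag the obstacle. To invoke stochastic approximation with Markov noise you need the Poisson equation for the fast chain $(Y_n)$ to be uniformly controlled along the trajectory, and this fails if $\min_{i\in S}q^{(n)}_i$ can approach $0$ (the mixing time of $(Y_n)$ then blows up). You assert that ``the symmetric contracting structure on $S$'' prevents this, but give no argument; ruling out such degeneration is exactly the job of the entropy-type Lyapunov function $V$, and symmetry alone does not do it. Your leaf step is also circular: the drift computation in (b) is valid only near $q^*$, so you already need $q$ confined before you can force the leaf coordinates down --- whereas the paper's order is the reverse, leaves first by an argument independent of the $S$-behaviour, after which \eqref{goodu} supplies the exponential lower bound on $\min_{i\in S}W_ie^{N_i}$ needed to close \eqref{VKdlikestoapp}. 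If you introduced $V$ as a test function in your discrete scheme and bounded the Poisson-equation correction directly (rather than invoking a black-box theorem whose hypotheses you have not verified near the boundary), your argument would go through --- but it would then essentially be the paper's proof read off at jump times.
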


By Proposition \ref{vrrwmixcvrrw}, we can apply Theorem \ref{cvrrwcomlike} to the VRRW.  For a VRRW on a complete-like graph, let $z(n)=(z_i(n))_{i\in V}$ with $z_i(n):=Z_i(n)/n$.

\begin{corollary}
  \label{VRRWcomlikeas}
  Let $(X_n)_{n\in \NN}$ be a VRRW on a complete-like graph $G=S \cup \partial S$ with $S=K_d$. Then, a.s.
   \begin{enumerate}[topsep=0pt, partopsep=0pt, label=(\roman*)]
\item (Theorem 1, \cite{MR2759737}) for any $i\in S$, $\lim_{n\to \infty}z_i(n)=1/d$,
 \item for any $j \in \partial S$, $\lim_{n\to \infty}n^{-\frac{1}{d-1}}Z_j(n)$ exists and is non-zero.
   \end{enumerate} 
\end{corollary}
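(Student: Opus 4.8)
The starting point is Proposition \ref{vrrwmixcvrrw}: the given VRRW is equal in law to the embedded jump chain $(X_{\sigma_n})_{n\ge 0}$ of a cVRRW $X=(X_t)_{t\ge 0}$ on $G$ with independent weights $W_i\sim\mathrm{Gamma}(a_i,1)$, where $\sigma_0=0<\sigma_1<\sigma_2<\cdots$ are its jump times; so it suffices to prove the two assertions for this jump chain, and in particular every $W_i$ is a.s.\ in $(0,\infty)$. For $v\in V$ let $V_v(t)$ be the number of visits of $X$ to $v$ during $[0,t]$, so that $Z_i(n)=a_i+V_i(\sigma_n)+O(1)$. The first observation is the deterministic identity $N_v(\mathbf T(t))=\int_0^t\mathds 1_{\{X_s\sim v\}}\,ds=:L_v(t)$, seen by differentiating (both sides vanish at $t=0$ because $\mathbf T(0)=0$). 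Feeding in Theorem \ref{cvrrwcomlike}, we get, almost surely, $N_i(\mathbf T(t))=L_i(t)=\tfrac{d-1}{d}\,t+\lambda_i+o(1)$ for $i\in S$, and $N_j(\mathbf T(t))=L_j(t)=T_{n(j)}(t)=\tfrac1d\,t+\gamma_{n(j)}+o(1)$ for $j\in\partial S$, with $\lambda_i$ and $\gamma_{n(j)}$ finite (the $\lambda_i$ being built from the finite limits of $T_k(t)-t/d$ for $k\in S\setminus\{i\}$ and the finite leaf local times).

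The heart of the argument is the a.s.\ asymptotics $V_v(t)\sim W_v\,e^{N_v(\mathbf T(t))}$ for every $v\in V$. Indeed, the counting process $t\mapsto\#\{\text{jumps of }X\text{ into }v\text{ in }[0,t]\}$ has intensity $W_v\,e^{N_v(\mathbf T(s))}\mathds 1_{\{X_s\sim v\}}$, hence, by the identity above, predictable compensator $W_v\int_0^t e^{L_v(s)}\,dL_v(s)=W_v(e^{L_v(t)}-1)$. Thus $M_v(t):=V_v(t)-\mathds 1_{\{X_0=v\}}-W_v(e^{L_v(t)}-1)$ is a martingale whose predictable bracket equals its compensator $W_v(e^{L_v(t)}-1)$, which tends to $\infty$ since $L_v(t)\to\infty$. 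A standard $L^2$-maximal-inequality argument along the stopping times at which $L_v$ hits a geometric sequence of levels, combined with the monotonicity of $V_v$ and the continuity of $L_v$, then gives $V_v(t)/\big(W_v e^{L_v(t)}\big)\to1$ a.s. Substituting the two asymptotic regimes of $L_v$, this yields $V_i(t)\sim W_i e^{\lambda_i}e^{(d-1)t/d}$ for $i\in S$ and $V_j(t)\sim W_j e^{\gamma_{n(j)}}e^{t/d}$ for $j\in\partial S$.

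It remains to identify the constants and change time. Because $\pi_i(\mathbf T(t))\to1/d$ for each $i\in S$ while the denominator $\sum_{x\in V}W_x e^{N_x(\mathbf T(t))}$ is asymptotic to its $S$-part (each leaf term is $O(e^{t/d})=o(e^{(d-1)t/d})$ since $d\ge3$), we obtain $W_i e^{\lambda_i}\big/\sum_{i'\in S}W_{i'}e^{\lambda_{i'}}=1/d$ for every $i\in S$; hence $B:=W_i e^{\lambda_i}$ does not depend on $i\in S$ and $\sum_{i'\in S}W_{i'}e^{\lambda_{i'}}=dB\in(0,\infty)$. The total number of jumps in $[0,t]$ equals $\sum_{v\in V}V_v(t)+O(1)\sim dB\,e^{(d-1)t/d}$; evaluating at $t=\sigma_n$ gives $n\sim dB\,e^{(d-1)\sigma_n/d}$, i.e.\ $e^{(d-1)\sigma_n/d}\sim n/(dB)$. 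Therefore $Z_i(n)=a_i+V_i(\sigma_n)+O(1)\sim B\,e^{(d-1)\sigma_n/d}\sim n/d$, proving (i); and, writing $e^{\sigma_n/d}=\big(e^{(d-1)\sigma_n/d}\big)^{1/(d-1)}$, we get $Z_j(n)\sim W_j e^{\gamma_{n(j)}}e^{\sigma_n/d}\sim W_j e^{\gamma_{n(j)}}(dB)^{-1/(d-1)}\,n^{1/(d-1)}$, so $n^{-1/(d-1)}Z_j(n)$ converges a.s.\ to $W_j e^{\gamma_{n(j)}}(dB)^{-1/(d-1)}$, which is a.s.\ finite and strictly positive; this is (ii).

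The step I expect to be the real obstacle is the asymptotics $V_v(t)\sim W_v e^{N_v(\mathbf T(t))}$: one must first notice the identity $N_v(\mathbf T(t))=L_v(t)$ (without it the compensator $W_v\int_0^t e^{N_v(\mathbf T(s))}\mathds 1_{\{X_s\sim v\}}\,ds$ has no closed form), and then control a martingale whose bracket grows exponentially fast, which forces some care in the choice of the sampling times and in passing from $L^2$- to a.s.-convergence for a monotone process. Everything after that is bookkeeping: transporting the a.s.\ statements of Theorem \ref{cvrrwcomlike} through the random time-change $n\leftrightarrow\sigma_n$, and using the convergence of $\pi_i(\mathbf T(t))$ only to fix the proportionality constant at $1/d$.
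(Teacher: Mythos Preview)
Your argument is correct and follows the same skeleton as the paper: pass to the cVRRW via Proposition~\ref{vrrwmixcvrrw}, establish $V_v(t)\sim W_v\,e^{N_v(\mathbf T(t))}$, feed in the asymptotics from Theorem~\ref{cvrrwcomlike}, and take ratios. The one place where you work harder than necessary is precisely the step you flag as the ``real obstacle'': the paper's Corollary~\ref{wevexpsum} already identifies the visit count to $v$ as $U_v\!\big(W_v(e^{N_v(\mathbf T(t))}-1)\big)$ for an independent unit-rate Poisson process $U_v$, so $V_v(t)/\big(W_v(e^{N_v(\mathbf T(t))}-1)\big)\to 1$ is just the strong law for a Poisson process---no compensator computation, no geometric sampling, no monotonicity argument needed. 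Your identity $N_v(\mathbf T(t))=\int_0^t\mathds 1_{\{X_s\sim v\}}\,ds$ together with the compensator calculation is, in effect, a rediscovery of that corollary from the martingale side; it works, but the Poisson embedding makes it a one-liner.
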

\begin{remark}
  The result (ii) for the leaf vertices is new and improves Lemma 9 and Equation (1.4) in Theorem 2 in \cite{MR2759737}.
\end{remark}

These results can be generalized to complete $d$-partite graphs with possible leaves.

\begin{definition}
  \label{defEs}
  Given $d \geq 2$, $(G, \sim)$ will be called a complete $d$-partite graph, if $G=V_{1} \cup \cdots \cup V_{d}$ with 
  \begin{enumerate}[topsep=0pt, partopsep=0pt, label=(\roman*)]
\item $\forall p \in\{1, \ldots, d\}, \forall i, j \in V_{p}$, if $i \neq j$ then $i \nsim j$
\item $\forall p, q \in\{1, \ldots, d\}, p \neq q, \forall i \in V_{p}, \forall j \in V_{q}, i \sim j$
  \end{enumerate} 
$G$ is called a complete d-partite graph with (possible) leaves, if $G=S\cup \partial S$ where $S=V_{1} \cup \cdots \cup V_{d}$ is a d-partite graph and any $j\in \partial S$ has exactly one neighbor. 

\end{definition}

\begin{theorem}
  \label{comdpartthm}
  Let $G=S \cup \partial S$ be a complete d-partite $(d\geq 3)$ graph with possible leaves and $X$ be a cVRRW on $G$. Then, almost surely, 
  \begin{enumerate}[topsep=0pt, partopsep=0pt, label=(\roman*)]
\item $T_j(\infty)<\infty$ and $\lim_{t\to \infty}\pi_j(\mathbf{T}(t))=0$ for all $j\in \partial S$. 
\item for any $i \in S$, $z_i:= \lim_{t\to \infty}\pi_i(\mathbf{T}(t))$ exists and is positive. In addition, for any $p\in \{1,2,\cdots,d\}$, $\sum_{i\in V_p}z_i=1/d$. 
  \end{enumerate}  
Moreover, let $(X_n)_{n\in \NN}$ be a VRRW on $G$. Then, a.s., 
\begin{enumerate}[topsep=0pt, partopsep=0pt, label=(\Roman*)]
\item (Section 4, \cite{MR2759737}) for any $i\in S$, $\tilde{z}_i:=\lim_{n\to \infty}Z_i(n)/n$ exists and is positive. For any $p\in \{1,2,\cdots,d\}$, $\sum_{i\in V_p}\tilde{z}_i=1/d$.
\item for any $j \in \partial S$, $$\lim_{n\to \infty} \frac{\log Z_j(n)}{\log n}  = \frac{d }{d-1}\tilde{z}_{n_j}$$
\end{enumerate}   
\end{theorem}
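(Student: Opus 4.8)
The plan is to reduce the $d$-partite case to a stochastic-approximation analysis that parallels the complete-like case (Theorem \ref{cvrrwcomlike}), then transfer to the VRRW via Proposition \ref{vrrwmixcvrrw}. First I would set up the dynamics for the cVRRW on $G = S \cup \partial S$. For a vertex $i \in V_p$, the quantity $N_i(\mathbf{T}(t)) = \sum_{k \sim i} T_k(t)$ only involves local times of vertices outside $V_p$ (plus the leaf of $i$, whose local time is bounded — this needs to be shown first). So up to a bounded correction, $N_i(\mathbf{T}(t))$ depends only on $p$: writing $\sigma_p(t) := \sum_{q \neq p} \sum_{k \in V_q} T_k(t) = t - \sum_{k \in V_p} T_k(t) - (\text{leaf times})$, we have $N_i(\mathbf{T}(t)) \approx \sigma_p(t)$ for all $i \in V_p$. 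The key structural observation — the analogue of what drives the complete graph case — is that the jump rates out of any vertex in $V_p$ toward $V_q$ are governed by $e^{\sigma_q(t)}$, so the process sees essentially $d$ "super-vertices." I would first establish (i), that $T_j(\infty) < \infty$ for each leaf $j$: since $X$ spends most of its time on $S$ and the rate of jumping back from a leaf $j$ to $n(j)$ is $W_{n(j)} e^{N_{n(j)}(\mathbf{T}(t))} \to \infty$ while the rate of reaching $j$ stays bounded by the time spent at $n(j)$, a Borel–Cantelli / comparison argument shows the total time at each leaf is finite, whence $\pi_j(\mathbf{T}(t)) \to 0$.

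With the leaves controlled, part (ii) for the cVRRW is the heart of the matter. I would show that $\pi_i(\mathbf{T}(t))$ converges by exhibiting the vector $(\pi_i(\mathbf{T}(t)))_{i \in S}$ as an asymptotic pseudo-trajectory of a deterministic flow, exactly as in the proof of Theorem \ref{cvrrwcomlike}. The natural candidate is a replicator-type ODE on the simplex whose rest points satisfy the balance condition $\sum_{i \in V_p} z_i = 1/d$ for every $p$; indeed, if the walk spends asymptotic fraction $m_p := \sum_{i \in V_p} T_i(t)/t$ of its time in part $V_p$, then detailed-balance-type reasoning forces $e^{\sigma_p}$ to be comparable across $p$ up to the weights, which pins down $m_p = 1/d$. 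The individual limits $z_i$ within a part are then determined by the weights $W_i$ and need not be equal — this is the genuine new feature versus the complete-like case. I would invoke a Benaïm-style stochastic approximation theorem (the same machinery the paper uses for Theorem \ref{cvrrwcomlike}) to conclude convergence to a single rest point; ruling out convergence to unstable equilibria can be handled as in the earlier argument or, if the linearization is degenerate, by a Lyapunov function of the form $\sum_i z_i \log z_i$ adapted to the partition constraints.

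For the VRRW statements (I) and (II), I would apply Proposition \ref{vrrwmixcvrrw}: the VRRW equals in law the discrete skeleton of a cVRRW with independent $W_i \sim \operatorname{Gamma}(a_i,1)$, which are a.s. positive and finite, so the cVRRW conclusions apply for almost every realization of the weights. Statement (I) follows since $Z_i(n)/n$ for the discrete walk converges to the same limit as $T_i(t)/t$ for the continuous walk (the number of discrete steps at $i$ and the continuous local time at $i$ are comparable because jump rates out of $S$-vertices stay bounded away from $0$ and $\infty$ once the $\pi_i$ have converged). For statement (II), the rate for a leaf $j$: the number of visits $Z_j(n)$ grows because the rate of arriving at $j$ from $n(j)$ is proportional to $W_j$ (bounded), while each excursion to $j$ is short; integrating, $\log Z_j(n)$ should track $\int_0^{t(n)} (\text{arrival rate at } j)\,du$, and since the arrival rate at $j$ is $\asymp$ (time spent at $n(j)$ per unit time) $= z_{n(j)} \cdot e^{N_{n(j)}}/(\ldots)$, while $N_{n(j)}(\mathbf{T}) \sim \sigma_{p}(t) \sim t(d-1)/d$ on the relevant scale — wait, more carefully: the total rate of leaving $S$ is what controls $t$ versus $n$, and the leaf-visit count satisfies a recursion giving $Z_j(n) \approx n^{\alpha_j}$ with $\alpha_j = \frac{d}{d-1} \tilde z_{n(j)}$; I would make this precise by writing a differential equation for $\log Z_j$ in terms of $\log n$ and identifying the slope from the asymptotic visit frequency at $n(j)$, mirroring the $n^{1/(d-1)}$ computation of Corollary \ref{VRRWcomlikeas}(ii) with $\tilde z_{n(j)} = 1/d$ recovering that exponent.

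The main obstacle I anticipate is part (ii)/(I): unlike the complete-graph case, the rest point of the limiting ODE is no longer a single symmetric point but a whole face of equilibria (any $z$ with $\sum_{i \in V_p} z_i = 1/d$ that is "weight-consistent"), so one must show the stochastic approximation actually converges rather than cycling, and identify which point it picks. Establishing the right Lyapunov function and confirming the non-convergence to unstable sets (a Pemantle-type argument) is where the real work lies; the leaf asymptotics (II), while requiring care in matching the time change between continuous and discrete clocks, should follow a routine, if delicate, integration once (i) and (I) are in hand.
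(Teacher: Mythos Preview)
Your overall architecture is right --- leaves first, then a Lyapunov/stochastic-approximation argument on $S$, then transfer to VRRW via Proposition~\ref{vrrwmixcvrrw} --- and it matches the paper's. But you have misidentified where the difficulty lies in (ii), and this leads you to propose machinery (Pemantle-type non-convergence to unstable sets, cycling on a face of equilibria) that is not needed and would not be the natural way forward.

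The observation you are missing is that the within-part ratios are essentially frozen. For $i,j\in V_p$ we have $N_i(\mathbf{T})-N_j(\mathbf{T})=T_{\ell(i)}(t)-T_{\ell(j)}(t)$, which by part (i) converges to a finite limit. Hence
\[
\frac{\pi_i(\mathbf{T}(t))}{\pi_j(\mathbf{T}(t))}=\frac{W_i}{W_j}\,e^{T_{\ell(i)}(t)-T_{\ell(j)}(t)}
\]
converges automatically, so $\pi_i/\sum_{y\in V_p}\pi_y$ converges to a positive limit with no further work. There is no face of equilibria to worry about for the actual process: once (i) is established, the only remaining question is whether the aggregated masses $m_p(t):=\sum_{y\in V_p}\pi_y(\mathbf{T}(t))$ converge to $1/d$. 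The paper handles exactly this by taking the complete-graph Lyapunov function on the ``super-vertices'' you yourself mention, namely
\[
V(t)=\frac{1}{d}\sum_{q=1}^d\log\Bigl(\sum_{y\in V_q}\pi_y(\mathbf{T}(t))\Bigr)
\]
(up to bounded terms), and running the identical argument as in Theorem~\ref{dcomuni}/\ref{cvrrwcomlike}. Your suggestion of an entropy $\sum_i z_i\log z_i$ on individual vertices is the wrong granularity; the Lyapunov function should live on the $d$ parts.

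For (II) your sketch is headed in the right direction but vague at the crucial step. The paper's route is cleaner: apply the stochastic-approximation result (Proposition~\ref{propfTstoapp}/Theorem~\ref{fTasconv}) with $f(\mathbf{T})=T_{n_j}$ to get that $T_{n_j}(t)-\int_0^t\pi_{n_j}(\mathbf{T}(u))\,du$ converges, hence $T_{n_j}(t)/t\to z_{n_j}$. Since the leaf visit count satisfies $Y_j(t)\sim W_j e^{T_{n_j}(t)}$ by the Poisson law of large numbers (Corollary~\ref{wevexpsum}) and $\log\sum_x W_x e^{N_x(\mathbf{T}(t))}\sim (1-\tfrac{1}{d})t$, the exponent $\frac{d}{d-1}\tilde z_{n_j}$ drops out directly.
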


For a complete-like graph $G=S \cup \partial S$, we define $z^*=(z_i^*)_{i\in V}$ by 
\begin{equation}
  \label{zstardef}
  z^*_i:=\frac{1}{d}, \quad \text{if}\ i\in S; \quad z^*_i:=0 \quad \text{if}\ i\in \partial S
\end{equation}
Theorem \ref{cvrrwcomlike} implies the almost sure convergence of $(\pi_i(\mathbf{T}(t)))_{i \in V}$ to $z^*$.
The following theorem gives upper bounds on the rate of convergence, which improves Theorem 2 in \cite{MR2759737}. 

For $x\in \RR^V$, we write $\|x\|:=\|x\|_{L^2}$ for simplicity.

\begin{theorem}
  \label{rateupper}
  Let $(X_t)_{t\geq 0}$ and $(X_n)_{n\in \NN}$, respectively, be a cVRRW and a VRRW on $G$ where $G=S\cup \partial S$ is a complete-like graph with $S=K_d$. 
  \begin{enumerate}[topsep=0pt, partopsep=0pt, label=(\roman*)]
 \item If $d>3$, then almost surely,
  \begin{equation}
    \label{convratedlarge3}
  \lim_{t\to\infty} \|\pi(\mathbf{T}(t))-z^*\|e^{\frac{t}{d}} \quad \text{and}\quad \lim_{n\to\infty} \|z(n)-z^*\|n^{\frac{1}{d-1}} \quad \text{exist}
  \end{equation}
and for any $i\in S$,
  \begin{equation}
    \label{limTiminustd}
     \lim_{t\to \infty}\left( T_i(t)-\frac{t}{d}\right) =\log W_i+ \sum_{v\in \partial S: v\sim i}T_{v}(\infty) -\frac{1}{d}(\sum_{x\in S}\log W_x+ 2\sum_{j\in \partial S}T_j(\infty))
  \end{equation}
 \item  If $d=3$, then for any $\varepsilon>0$, almost surely,
\begin{equation}
  \label{convrated3}
\lim_{t\to\infty} \|\pi(\mathbf{T}(t))-z^*\|e^{\frac{(1-\varepsilon)t}{3}}=0  \quad \text{and}\quad \lim_{n\to\infty} \|z-z^*\|n^{\frac{1}{2}-\varepsilon}=0 
\end{equation}
  \end{enumerate}
\end{theorem}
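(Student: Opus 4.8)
The plan is to linearize the local-time dynamics around the equilibrium $z^*$: the rate $e^{-t/d}$ is the contraction rate of the linearized flow, and the precise limits come from solving the resulting linear stochastic equation. First, \eqref{limTiminustd} is essentially algebra on top of Theorem~\ref{cvrrwcomlike}. By that theorem the limits $T_j(\infty)$ ($j\in\partial S$) and $\ell_i:=\lim_{t}(T_i(t)-t/d)$ ($i\in S$) exist and $\pi_i(\mathbf{T}(t))\to 1/d$. For $i\in S$ write $N_i(\mathbf{T}(t))=\tfrac{d-1}{d}t-(T_i(t)-t/d)+c_i(t)$ with $c_i(t):=\sum_{v\in\partial S:\,v\sim i}T_v(t)-\sum_{j\in\partial S}T_j(t)\to c_i(\infty)$, and $N_j(\mathbf{T}(t))=T_{n(j)}(t)$ for a leaf $j$. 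Plugging into \eqref{statidisHt} and using that for $d\ge3$ the $S$-terms (order $e^{(d-1)t/d}$) dominate the leaf terms (order $e^{t/d}$) in the denominator, one finds that $W_ie^{c_i(\infty)-\ell_i}$ is independent of $i\in S$; combining this with the two identities $\sum_{i\in S}\ell_i=-\sum_{j}T_j(\infty)$ and $\sum_{i\in S}c_i(\infty)=-(d-1)\sum_{j}T_j(\infty)$, both read off from $\sum_{i\in S}T_i(t)+\sum_{j\in\partial S}T_j(t)=t$, pins down each $\ell_i$ and gives \eqref{limTiminustd}.

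\emph{Reduction to a linearized equation.} For $i\in S$ set $\varepsilon_i(t):=T_i(t)-t/d-\ell_i$ and for $j\in\partial S$ set $\rho_j(t):=T_j(\infty)-T_j(t)\ge0$; both tend to $0$ by Theorem~\ref{cvrrwcomlike}, and $\sum_{i\in S}\varepsilon_i(t)=\sum_{j}\rho_j(t)$. The computation of Step~1 gives $\pi_j(\mathbf{T}(t))=\Theta(e^{-(d-2)t/d})$ for leaves, while a Taylor expansion of \eqref{statidisHt} gives, for $i\in S$,
\[
\pi_i(\mathbf{T}(t))-\tfrac1d=-\tfrac1d\,\varepsilon_i(t)+O\!\big(\|\varepsilon(t)\|^2\big)+O\!\big(e^{-(d-2)t/d}\big).
\]
Since $(d-2)/d>1/d$ exactly when $d>3$, for $d>3$ the quantity $\|\pi(\mathbf{T}(t))-z^*\|$ equals $\tfrac1d\|\varepsilon(t)\|$ up to terms negligible once $\varepsilon(t)$ is known to be $O(e^{-t/d})$, so everything reduces to the asymptotics of $\varepsilon$.

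\emph{The linear stochastic equation and its solution.} From $\tfrac{d}{dt}T_i(t)=\mathds 1_{\{X_t=i\}}$ and the martingale decomposition underlying the stochastic approximation used for Theorem~\ref{cvrrwcomlike} — the frozen jump chain is reversible with invariant law $\pi(\mathbf{T})$, so a Poisson-equation corrector (itself of order $e^{-(d-1)t/d}$ because the jump rates are of order $e^{(d-1)t/d}$) turns $\int_0^t(\mathds 1_{\{X_s=i\}}-\pi_i(\mathbf{T}(s)))\,ds$ into a martingale plus lower-order boundary terms — one obtains, after substituting the expansion above,
\[
d\varepsilon_i(t)=-\tfrac1d\,\varepsilon_i(t)\,dt+d\mathcal M_i(t)+dR_i(t),
\]
where $\mathcal M_i$ is a martingale and $R_i$ satisfies $\int_0^\infty e^{s/d}\,|dR_i(s)|<\infty$ for $d>3$ (its pieces are the $O(\|\varepsilon\|^2)$, $O(e^{-(d-2)t/d})$ and corrector contributions). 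Hence $e^{t/d}\varepsilon_i(t)=\varepsilon_i(0)+\int_0^t e^{s/d}\,d\mathcal M_i(s)+\int_0^t e^{s/d}\,dR_i(s)$, and it remains to show $\int_0^t e^{s/d}\,d\mathcal M_i(s)$ converges a.s., i.e. $\int_0^\infty e^{2s/d}\,d\langle\mathcal M_i\rangle_s<\infty$. Over an interval of length $ds$ there are $\asymp e^{(d-1)s/d}\,ds$ jumps, each moving the corrector by $O(e^{-(d-1)s/d})$, so $d\langle\mathcal M_i\rangle_s\asymp e^{(d-1)s/d}e^{-2(d-1)s/d}\,ds=e^{-(d-1)s/d}\,ds$ and $\int_0^\infty e^{2s/d}e^{-(d-1)s/d}\,ds=\int_0^\infty e^{-(d-3)s/d}\,ds<\infty$ precisely when $d>3$. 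This shows $e^{t/d}\varepsilon_i(t)$ converges a.s., and feeding back into the previous paragraph gives convergence of $e^{t/d}\|\pi(\mathbf{T}(t))-z^*\|$, i.e. the cVRRW part of \eqref{convratedlarge3}. For $d=3$ the weight $e^{2s/3}$ is replaced by $e^{2(1-\varepsilon)s/3}$, for which $\int_0^\infty e^{2(1-\varepsilon)s/3}e^{-2s/3}\,ds<\infty$; this only yields $e^{(1-\varepsilon)t/3}\varepsilon_i(t)\to0$, hence \eqref{convrated3}.

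\emph{Transfer to the VRRW, and the main obstacle.} By Proposition~\ref{vrrwmixcvrrw} it suffices to deduce the discrete statements from the continuous ones for a fixed realization of the $\operatorname{Gamma}$ weights. If $t_n$ is the time of the $n$-th jump, then since the total jump rate equals $\sum_{x\in S}W_xe^{N_x(\mathbf{T}(s))}+O(e^{s/d})\sim d\,C\,e^{(d-1)s/d}$ one gets $n\,e^{-(d-1)t_n/d}\to dC$ a.s., hence $e^{t_n/d}\sim(dC)^{1/(d-1)}n^{1/(d-1)}$; moreover the discrete skeleton is reversible with invariant law proportional to $\pi_i(\mathbf{T})N_i(\mathbf{T})$-type weights, and one checks $z_i(n)-\pi_i(\mathbf{T}(t_n))=o(n^{-1/(d-1)})$ when $d>3$ (the empirical-versus-invariant discrepancy is $O(n^{-1/2})=o(n^{-1/(d-1)})$ exactly because $d>3$). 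Together with the previous paragraph this gives the discrete parts of \eqref{convratedlarge3} and \eqref{convrated3}. The delicate point is the third paragraph: making the martingale decomposition rigorous despite the jump rate exploding like $e^{(d-1)t/d}$, and controlling the leaf excursions, which inject noise into the interior dynamics; this is exactly where the dichotomy $d>3$ versus $d=3$ originates, since both $\int_0^\infty e^{-(d-3)s/d}\,ds$ and its discrete analogue $\sum_n n^{\frac{2}{d-1}-2}$ converge if and only if $d>3$.
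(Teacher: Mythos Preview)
Your derivation of \eqref{limTiminustd} is correct and in fact cleaner than the paper's: it uses only the convergence statements of Theorem~\ref{cvrrwcomlike} and some algebra, whereas the paper extracts it from the convergence of the more refined quantity~\eqref{piipijdiff}.

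For the rate itself, your route (linearize $\pi_i-1/d\approx -\tfrac1d\varepsilon_i$, solve the scalar linear equation for $\varepsilon_i$) is genuinely different from the paper's. The paper applies Proposition~\ref{propfTstoapp} not to $f(\mathbf T)=T_i$ but to the carefully chosen function
\[
f(\mathbf T)=W_ie^{T_j+T_{\ell(i)}}-W_je^{T_i+T_{\ell(j)}},
\]
for which the drift $\pi(\mathbf T)\cdot\nabla f(\mathbf T)$ is \emph{exactly} a leaf-only term (the second line of~\eqref{piipijdiff}), with no quadratic remainder in $\varepsilon$ at all. The convergence of $f(\mathbf T(t))$ then follows directly from the estimates in Lemma~\ref{mjpcomplike} and the bound $g_f(X_t,\mathbf T(t))=O(e^{-t/d})$, after which~\eqref{piipijetd} gives the rate. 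What this buys is that no bootstrap is needed; the argument is a single pass.

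That is precisely where your outline has a gap. You assert that $\int_0^\infty e^{s/d}\,|dR_i(s)|<\infty$, but one of the pieces of $R_i$ is the quadratic remainder $O(\|\varepsilon\|^2)$, and the integrability $\int_0^\infty e^{s/d}\|\varepsilon(s)\|^2\,ds<\infty$ presupposes $\|\varepsilon(s)\|=O(e^{-s/d})$, which is the conclusion. The phrase ``once $\varepsilon(t)$ is known to be $O(e^{-t/d})$'' flags the circularity without resolving it. It can be repaired: from $\varepsilon\to0$ one first absorbs $O(\|\varepsilon\|^2)\le\delta\|\varepsilon\|$ into the linear part to get $\varepsilon(t)=O(e^{-(1/d-\delta')t})$ for every $\delta'>0$, and then feeds this back to obtain $\int_0^\infty e^{s/d}\|\varepsilon(s)\|^2\,ds<\infty$ and hence the sharp rate. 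But that two-step bootstrap is the substance of the argument and should be written out. The paper's test function is engineered so that this issue never arises.

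Your VRRW transfer is in the same spirit as the paper's (Proposition~\ref{vrrwmixcvrrw} plus Poisson LLN/LIL via Corollary~\ref{wevexpsum}); the paper makes this explicit by controlling $(Y_i(t))^{1/(d-1)}\log(Y_j(t)/Y_i(t))$ and invoking the LIL bound~\eqref{lilMu}, which is essentially your $O(n^{-1/2})$ discrepancy claim made precise.
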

\begin{remark}
 (a)  The exponentially fast convergence can be generalized to complete d-partite graphs with possible leaves by similar arguments. In particular, Theorem \ref{comdpartthm} (II) can be improved: Almost surely, $ Z_j(n)/n^{\frac{d }{d-1}\tilde{z}_{n_j}}$ converges. \\
 (b) If $G=K_d$, i.e. $\partial S=\emptyset$, the right hand side of (\ref{limTiminustd}) is deterministic.
  \end{remark}

 In the case $G=K_d$ with $d>3$, we prove a conjecture by Limic and Volkov \cite{MR2759737} that the exponent $1/(d-1)$ in (\ref{convratedlarge3}) is optimal in the following sense.

 \begin{theorem}
  \label{d-1optimal}
  Let $(X_t)_{t\geq 0}$ and $(X_n)_{n\in \NN}$, respectively, be a cVRRW and a VRRW on $G=K_d$ with $d>3$, and let $\varepsilon>0$ be a constant, then
  \begin{equation}
   \label{opticVRRW}
        \PP(\lim_{t\to\infty}\|\pi(\mathbf{T}(t))-z^*\|e^{\frac{(1+\varepsilon)t}{d}}=\infty)>0,\quad \PP(\lim_{n\to\infty}\|z(n)-z^*\|n^{\frac{1}{d-1}+\varepsilon}=\infty)>0
  \end{equation}
 \end{theorem}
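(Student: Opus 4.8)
The plan is to show that the convergence $\pi(\mathbf T(t))\to z^*$ cannot be faster than exponential with rate $t/d$, by producing, with positive probability, a lower bound on $\|\pi(\mathbf T(t))-z^*\|$ of order $e^{-(1+\varepsilon)t/d}$ for all large $t$. Since on $G=K_d$ the weights $W_i$ are deterministic constants, the only source of randomness is the walk itself, and by Theorem~\ref{cvrrwcomlike}(ii) together with~\eqref{limTiminustd} we already know that $U_i(t):=T_i(t)-t/d$ converges a.s.\ to a deterministic limit $U_i(\infty)$. I would first write $\pi_i(\mathbf T(t))-\tfrac1d$ in terms of the differences $U_i(t)-\bar U(t)$ where $\bar U(t)=\tfrac1d\sum_{x\in S}U_x(t)$; a Taylor expansion of the softmax~\eqref{statidisHt} gives $\pi_i(\mathbf T(t))-\tfrac1d = \tfrac1d\big(N_i(\mathbf T(t))-\overline{N}(\mathbf T(t))\big)+O(\|N-\overline N\|^2)$, and on $K_d$ one has $N_i(\mathbf T)=t-T_i(t)=\tfrac{(d-1)t}{d}-U_i(t)$, so the leading term is $-\tfrac1d(U_i(t)-\bar U(t))$ up to higher order. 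Thus $\|\pi(\mathbf T(t))-z^*\|$ is comparable, to leading order, to $\|U(t)-\bar U(t)\|$, and the whole question becomes: with positive probability, does $U(t)$ fail to converge to a \emph{balanced} configuration too quickly?

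The second and main step is the fluctuation/lower-bound analysis. The natural tool is the stochastic approximation picture underlying Theorem~\ref{cvrrwcomlike}: the rescaled occupation vector satisfies a noisy ODE whose linearization at the equilibrium $z^*$, restricted to the subspace orthogonal to the simplex directions, has eigenvalues governing the exponential rate; the claim $d>3$ is exactly what makes the relevant eigenvalue equal to $-1/d$ with a genuinely fluctuating (non-degenerate) direction rather than a faster-decaying one. I would identify the martingale $M(t)$ driving the fluctuations of $U_i(t)-\bar U(t)$ along that slow eigendirection, show its bracket $\langle M\rangle_t$ grows to a finite but \emph{positive} limit (using that the walk keeps visiting every vertex of $S$ at positive asymptotic frequency $1/d$, so the jump rates stay bounded away from $0$ and $\infty$), and then invoke the law of the iterated logarithm or simply a second-moment/Paley--Zygmund argument to conclude that, with positive probability, $e^{t/d}\,(U(t)-\bar U(t))$ does not go to $0$ — indeed is bounded below along a sequence $t_k\to\infty$. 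Multiplying by an extra $e^{\varepsilon t/d}$ then forces the limsup to be $+\infty$ with positive probability, which is the first assertion in~\eqref{opticVRRW}. The discrete-time statement follows by the time change relating the cVRRW to the VRRW (Proposition~\ref{vrrwmixcvrrw}), under which $e^{t/d}$ corresponds to $n^{1/(d-1)}$, so the same event transfers; here one uses that for $K_d$ the Gamma weights are irrelevant only in that $\partial S=\emptyset$, and the positive-probability event is preserved because it is measurable with respect to the walk.

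The hard part will be getting a \emph{lower} bound — upper bounds on fluctuations are routine from martingale maximal inequalities, but showing the fluctuations do not conspire to cancel requires real non-degeneracy input. Concretely, one must rule out that the slow component is itself a sum that telescopes to something of smaller order; this is where I expect to need a careful choice of coordinates diagonalizing the linearized dynamics on $K_d$ (the symmetric group $S_d$ acts, so the orthogonal complement of the constants is an irreducible $(d-1)$-dimensional representation, and the linearization acts as a scalar $-1/d$ on it), and then to show the projected martingale has asymptotically non-vanishing conditional variance. A clean way to package this: condition on the a.s.\ event from Theorem~\ref{cvrrwcomlike}, work on the event $\{U(t)\to u\}$ for a generic non-balanced $u$ — but since the limit is in fact deterministic, one instead argues that the \emph{approach} to that limit has a Gaussian-type fluctuation of size $e^{-t/d}$ (a central-limit correction to the stochastic approximation), and a Gaussian random variable is nonzero with probability $1$, hence $e^{(1+\varepsilon)t/d}\|\pi-z^*\|\to\infty$ on an event of probability arbitrarily close to, though possibly not equal to, one — positive probability suffices for the statement. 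I would double-check the borderline role of $d=3$: there the eigenvalue coincides with a resonance and the fluctuation picks up a polynomial correction, which is precisely why Theorem~\ref{d-1optimal} is stated only for $d>3$, and the proof should visibly use $d>3$ at the step where $\langle M\rangle_\infty<\infty$.
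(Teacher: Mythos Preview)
Your overall strategy---reduce to showing that the random variable $L:=\lim_{t\to\infty}e^{t/d}\|\pi(\mathbf T(t))-z^*\|$ from Theorem~\ref{rateupper} is nonzero with positive probability---is natural and, if carried out, would actually yield a stronger conclusion than the theorem (probability one rather than merely positive). It is, however, \emph{not} the route the paper takes, and your proposal leaves the decisive step unfinished.

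First a minor correction: your Taylor expansion of the softmax drops the weights. On $K_d$ one has $\pi_i\propto W_ie^{N_i}$, so the small quantity is $a_i-\bar a$ with $a_i=\log W_i+N_i$, not $N_i-\bar N$; in your coordinates $a_i-\bar a=-(U_i-U_i(\infty))+(\bar U-\bar U(\infty))$, since by~\eqref{limTiminustd} $U_i(\infty)=\log W_i-\overline{\log W}$. Thus the relevant object is the \emph{rate of approach} $U_i(t)-U_i(\infty)$, not $U_i(t)-\bar U(t)$ (which converges to a nonzero constant unless the weights are equal). This is fixable, but as written your first step is off.

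The real gap is the non-degeneracy. Knowing that $\langle M\rangle_\infty\in(0,\infty)$ does \emph{not} by itself give $\mathbb P(M_\infty\neq 0)>0$, and neither the LIL (a $\limsup$ statement, useless once the martingale converges) nor Paley--Zygmund (which would need control of $\mathbb E L^4$ and a lower bound on $\mathbb E L^2$, neither of which you supply) closes this. Worse, $L$ is not just $M_f(\infty)$: from~\eqref{piipijdiff} with $\partial S=\emptyset$ one sees that $\lim_{t\to\infty}(W_ie^{T_j}-W_je^{T_i})$ equals a constant plus $M_f(\infty)$ \emph{minus} the random integral $\int_0^\infty\partial_{T_{X_u}}Q\nabla f\,du$, and you give no argument ruling out exact cancellation. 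Invoking a ``Gaussian CLT for stochastic approximation'' is not a proof here; you would have to establish such a CLT in this specific continuous-time setting, which is itself a substantial task and not in the paper.

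The paper proceeds quite differently and sidesteps the distribution of $L$ altogether. It works with the scalar $R(t):=H(z^*)-H(\pi(\mathbf T(t)))=\|\pi(\mathbf T(t))-z^*\|^2$ (Proposition~\ref{Hjconst}) and its stochastic-approximation identity, which on $K_d$ reads $R(t)=\int_t^\infty J(u)\,du+\text{errors}$ with $J\approx\tfrac{2}{d}R$ near $z^*$. One then fixes a large time $t_1$ at which $R(t_1)$ lies in a prescribed window $(e^{-h_1t_1}/2,e^{-h_1t_1})$ with $h_1<2/d$ (this happens with positive probability by continuity of $R$), introduces a stopping time $T_H$ at which $R$ would exceed a slower envelope, and uses Doob's $L^2$ inequality on the stopped martingale to make the error terms uniformly $O(e^{-h_3t/2})$ with $h_3>2C_2>4/d$ on a high-probability event $E_1$. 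On $E_1$ the resulting two-sided integral inequalities $C_1\int_t^\infty R\le R(t)+Ce^{-h_3t/2}$ and $R(t)\le C_2\int_t^\infty R+Ce^{-h_3t/2}$ are turned, via a Gronwall-type argument on $y(t)=e^{C_it}\int_t^\infty R$, into $R(t)\ge C(t_1)e^{-C_2t}$ with $C_2$ arbitrarily close to $2/d$. This is where $d>3$ enters: one needs $h_3/2>C_2>2/d$, and $h_3$ is constrained below $h_2+1-\tfrac1d$, so the window $\tfrac2d<\tfrac12(1-\tfrac1d+\tfrac2d)$ is nonempty precisely when $d>3$. The VRRW statement is then transferred via Corollary~\ref{wevexpsum} and the Poisson LIL bound~\eqref{lilMu}, comparing $Y_i(t)/\sum_xY_x(t)$ to $\pi_i(\mathbf T(t))$.

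In short: your route could work and would give more, but the heart of it---showing the limit has no atom at zero---is missing, and the tools you name do not supply it. The paper's integral-inequality argument is more elementary and self-contained; it trades the sharper conclusion for a proof that never needs to identify the law of $L$.
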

  
If $G$ is a triangle, we improve the upper bound in (\ref{convrated3}), which disproves a conjecture by Limic and Volkov \cite{MR2759737} on the true rate of convergence.

\begin{proposition}
  \label{K3lb}
  Let $(X_t)_{t\geq 0}$ and $(X_n)_{n\in \NN}$, respectively, be a cVRRW and a VRRW on $G=K_3$ and $\kappa>1/2$ be a constant, then
  \begin{equation}
    \label{limsup3cVRRW}
    \PP(\lim_{t\to\infty}\frac{\|\pi(\mathbf{T}(t))-z^*\|e^{\frac{t}{3}}}{t^{\kappa}}=0)=1,\quad  \PP(\limsup_{t\to\infty}\frac{\|\pi(\mathbf{T}(t))-z^*\|e^{\frac{t}{3}}}{\sqrt{t}}=\infty)=1
  \end{equation}
and
 \begin{equation}
  \label{limsup3VRRW}
  \PP(\lim_{n\to\infty}\frac{\|z(n)-z^*\|\sqrt{n}}{(\log n)^{\kappa}}=0)=1,\quad  \PP(\limsup_{n\to\infty}\frac{\|z(n)-z^*\|\sqrt{n}}{\sqrt{\log n}}=\infty)=1
\end{equation}
\end{proposition}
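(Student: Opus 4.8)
The plan is to reduce \eqref{limsup3cVRRW} to a law of the iterated logarithm for one vector-valued martingale, and then to deduce \eqref{limsup3VRRW} from it via the random time change of Proposition \ref{vrrwmixcvrrw}. Take $G=K_3$, $S=\{1,2,3\}$, and introduce
\[
  \eta_i(t):=\bigl(\log W_i-T_i(t)\bigr)-\tfrac13\sum_{j\in S}\bigl(\log W_j-T_j(t)\bigr),\qquad i\in S,
\]
so $\sum_i\eta_i(t)=0$ and, since $N_i(\mathbf{T}(t))=t-T_i(t)$ on $K_3$, $\pi_i(\mathbf{T}(t))=e^{\eta_i(t)}\big/\sum_{j}e^{\eta_j(t)}$, while $\frac{d}{dt}\eta_i(t)=\tfrac13-\mathds{1}_{\{X_t=i\}}$ holds exactly. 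By Theorem \ref{cvrrwcomlike}(ii) one has $\eta(t)\to0$ and each $T_j(t)-t/3$ converges, hence $S(t):=\sum_jW_je^{-T_j(t)}$ satisfies $e^{t/3}S(t)\to\tilde S_\infty\in(0,\infty)$ a.s.; consequently $\|\pi(\mathbf{T}(t))-z^*\|\asymp\|\eta(t)\|$ and $\tfrac13-\pi_i(\mathbf{T}(t))=-\tfrac13\eta_i(t)+O(\|\eta(t)\|^2)$. Writing $\tfrac13-\mathds{1}_{\{X_t=i\}}=(\tfrac13-\pi_i)-(\mathds{1}_{\{X_t=i\}}-\pi_i)$ and multiplying by the integrating factor $e^{t/3}$ gives
\[
  \zeta_i(t):=e^{t/3}\eta_i(t)=\eta_i(0)-\int_0^t e^{s/3}\bigl(\mathds{1}_{\{X_s=i\}}-\pi_i(\mathbf{T}(s))\bigr)\,ds+\int_0^t e^{s/3}\,O\!\bigl(\|\eta(s)\|^2\bigr)\,ds .
\]

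Next I would isolate the driving martingale. Let $P_i$ be the compensated counting martingale of $\mathds{1}_{\{X_t=i\}}$ (jumps $\pm1$, with $\sum_iP_i\equiv0$), so that $d\mathds{1}_{\{X_t=i\}}=e^tS(t)\bigl(\pi_i(\mathbf{T}(t))-\mathds{1}_{\{X_t=i\}}\bigr)\,dt+dP_i(t)$, whence, with $h(s):=1/(e^sS(s))$,
\[
  \bigl(\mathds{1}_{\{X_s=i\}}-\pi_i(\mathbf{T}(s))\bigr)\,ds=h(s)\,dP_i(s)-h(s)\,d\mathds{1}_{\{X_s=i\}} .
\]
Since $e^{s/3}h(s)=1/(e^{2s/3}S(s))\sim\tilde S_\infty^{-1}e^{-s/3}$ tends to $0$ and has integrable total variation, integrating the last term by parts, and bounding the $O(\|\eta\|^2)$ term by the a priori rate $\|\eta(s)\|=o(e^{-s/4})$ of Theorem \ref{rateupper}(ii), yields $\zeta_i(t)=\eta_i(0)+M_i(t)+R_i(t)$ with $M_i(t):=-\int_0^t e^{s/3}h(s)\,dP_i(s)$ a martingale and $R_i(t)$ converging a.s. The jumps of $M_i$ vanish; quasi-stationary averaging of the transition rates of $X$ (using $e^{t/3}S(t)\to\tilde S_\infty$) shows $\langle M_i\rangle_t$ and $\langle M_i,M_j\rangle_t$ grow linearly in $t$, with limiting covariance matrix $\tfrac{2}{9\tilde S_\infty}\bigl(3I-\mathbf{1}\mathbf{1}^{\top}\bigr)$, positive definite on the hyperplane $\{\sum_i\zeta_i=0\}$; thus $t^{-1/2}M(t)$ is asymptotically a nondegenerate Gaussian on that hyperplane.

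The remaining steps are routine. From $\langle M_i\rangle_t=O(t)$, the Burkholder--Davis--Gundy inequality with Borel--Cantelli along $t=2^k$ (equivalently the upper half of the martingale LIL) gives $\|\zeta(t)\|=O(\sqrt{t\log\log t})$ a.s., hence $\|\pi(\mathbf{T}(t))-z^*\|\,e^{t/3}=o(t^\kappa)$ a.s.\ for every $\kappa>\tfrac12$, which is the first equality of \eqref{limsup3cVRRW}. For the reverse, project $M$ onto a fixed direction of the hyperplane with positive asymptotic variance: the resulting real martingale has quadratic variation $\asymp t$ and vanishing jumps, so the lower half of the LIL forces its $\limsup/\sqrt t$ to be $+\infty$ a.s., whence $\limsup_{t}\|\pi(\mathbf{T}(t))-z^*\|\,e^{t/3}/\sqrt t=\infty$ a.s. For \eqref{limsup3VRRW}, couple $X$ to the VRRW via Proposition \ref{vrrwmixcvrrw}; the jump counter $K(t)$, of rate $e^sS(s)(1-\pi_{X_s}(\mathbf{T}(s)))$ with $e^sS(s)\sim\tilde S_\infty e^{2s/3}$, satisfies $K(t)e^{-2t/3}\to K_\infty\in(0,\infty)$ a.s., so the $n$-th jump time obeys $t_n=\tfrac32\log n+O(1)$ a.s., giving $e^{t_n/3}\asymp\sqrt n$ and $\sqrt{t_n}\asymp\sqrt{\log n}$; and $\|z(n)-z^*\|\asymp\|\pi(\mathbf{T}(t_n))-z^*\|$ because $z(n)$ is a $1/n$-step stochastic approximation with target the jump-chain invariant law $\nu$, for which $\nu_i-\tfrac13=\tfrac12\bigl(\pi_i(\mathbf{T}(t_n))-\tfrac13\bigr)+O(\|\pi(\mathbf{T}(t_n))-z^*\|^2)$ while the residual fluctuation is of smaller order (and, since $\eta$ barely moves between consecutive jumps, the continuous-time limsup transfers to $t=t_n$). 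These identities turn \eqref{limsup3cVRRW} into \eqref{limsup3VRRW}.

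I expect the main obstacle to be the bracket asymptotics of the second step: one must replace the instantaneous transition rates of $X$ by their quasi-stationary averages and prove that $\langle M_i\rangle_t/t$ and $\langle M_i,M_j\rangle_t/t$ converge to the stated positive limits, with an error small enough not to blur the $\sqrt t$-versus-$t^\kappa$ gap. This is plausible because on $K_3$ the quasi-stationary law $\pi(\mathbf{T}(s))$ evolves on the $O(1)$ time scale whereas jumps occur on the $O(e^{-2s/3})$ scale, but carrying it out needs a careful block/averaging estimate fed by the a priori rate of Theorem \ref{rateupper}(ii). A secondary point, essential for the $\limsup=\infty$ assertions and hence for disproving the conjectured exact rate, is confirming that $\tilde S_\infty\in(0,\infty)$ a.s.\ and that the limiting covariance is genuinely nondegenerate on the hyperplane.
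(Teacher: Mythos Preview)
Your argument for the cVRRW part \eqref{limsup3cVRRW} is correct and takes a genuinely different route from the paper. The paper applies its Poisson--equation machinery (Proposition~\ref{propfTstoapp}) to the function $f(\mathbf{T})=W_1e^{T_3}-W_3e^{T_1}$, computes $Q\nabla f$ explicitly via Lemma~\ref{QKd}, and obtains a martingale $M_f$ whose bracket is bounded and whose optional quadratic variation $[M_f]_t$ grows linearly; the LIL then yields both directions. You instead exploit the identity $N_i(\mathbf{T}(t))=t-T_i(t)$ on $K_3$ to get an exact ODE $\dot\eta_i=\tfrac13-\mathds{1}_{\{X_t=i\}}$, multiply by $e^{t/3}$, and decompose $\mathds{1}_{\{X_t=i\}}-\pi_i$ through the compensated jump martingales $P_i$. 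This is more elementary and avoids the $Q$-matrix entirely, at the cost of a quadratic remainder $\int e^{s/3}O(\|\eta\|^2)\,ds$ that forces you to import the a~priori rate of Theorem~\ref{rateupper}(ii); the paper's choice of $f$ sidesteps this Taylor step and needs no such input. For the lower LIL you use $\langle M\rangle_t\asymp t$ with vanishing jumps, whereas the paper works with $[M_f]_t$ by summing jump contributions; both are valid here since the jump sizes decay like $e^{-t/3}$. Your worry about ``quasi-stationary averaging'' is overstated: for the LIL you only need $\langle M_1-M_2\rangle_t$ bounded between two multiples of $t$, and this follows directly from $e^{t/3}S(t)\to\tilde S_\infty\in(0,\infty)$ and $\pi_i\to 1/3$, without any delicate block estimate.

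Your VRRW transfer for \eqref{limsup3VRRW} is the weak point. The decomposition through the jump-chain invariant law $\nu$ does not do what you want: while your formula $\nu_i-\tfrac13=\tfrac12(\pi_i-\tfrac13)+O(\|\pi-z^*\|^2)$ is correct, the ``residual'' $z_i(n)-\nu_i$ is not of smaller order than $\nu_i-\tfrac13$ (both are of order $\sqrt{\log n}/\sqrt n$ along the relevant subsequence), so this splitting gives no leverage. The paper proceeds more directly: by Corollary~\ref{wevexpsum} the visit counts satisfy $Y_i(t)=U_i(W_i(e^{N_i(\mathbf{T}(t))}-1))$ for independent unit Poisson processes $U_i$, and the Poisson LIL \eqref{lilMu} yields $\bigl|Y_i(\eta_n)/n-\pi_i(\mathbf{T}(\eta_n))\bigr|=O(\sqrt{\log\log n}/\sqrt n)$, which \emph{is} of smaller order than $\sqrt{\log n}/\sqrt n$. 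Combined with your (correct) time change $e^{t_n/3}\asymp\sqrt n$, $\sqrt{t_n}\asymp\sqrt{\log n}$ and the observation that $\zeta$ varies by $O(e^{-t/3})$ between consecutive jumps, this immediately transfers both statements of \eqref{limsup3cVRRW} to \eqref{limsup3VRRW}. Replacing your $\nu$-based paragraph by this Poisson-LIL comparison closes the gap.
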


\subsection{Organization of the paper} In Section \ref{timelinesconsec}, we describe a timelines construction of the cVRRW which can be used to prove Proposition \ref{vrrwmixcvrrw}. Using the Markov property of $\left(X_{t}, \mathbf{T}(t)\right)_{t\geq 0}$, we establish in Section \ref{conMJPsec} a stochastic equation for $f(\mathbf{T}(t))$, see Proposition \ref{propfTstoapp}. In Section \ref{cVRRWcomsec}, for the case $G$ is a complete graph, we give a short proof of Theorem \ref{cvrrwcomlike} and retrieve a result by Pemantle \cite{pemantle1988thesis}. Section \ref{comlikesub} is devoted to the proof of Theorem \ref{cvrrwcomlike}, Corollary \ref{VRRWcomlikeas} and Theorem \ref{comdpartthm}. We also obtain a stochastic approximation result for the cVRRW on complete-like graphs, i.e. Theorem \ref{fTasconv}, which is of independent interest. The proofs of Theorem \ref{rateupper}, Theorem \ref{d-1optimal} and Proposition \ref{K3lb} are presented in Section \ref{rateconsub}.

\section{Timelines construction}
\label{timelinesconsec}

We may construct the cVRRW from a collection of random alarms similar to Rubin's algorithm for ordinal-dependent urns, see e.g. \cite{MR1030727}.
\begin{definition}[Timelines construction]
  \label{timelinesdef}
  Let $(X_t)_{t\geq 0}$ be defined as follows: 
  \begin{enumerate}[topsep=0pt, partopsep=0pt, label=(\roman*)]
 \item Define, on each vertex $i \in V$, independent point processes (alarm times) 
    $$
    0<\Theta_{1}^{(i)} < \Theta_{2}^{(i)} < \Theta_{3}^{(i)}\cdots
    $$
    with the convention that $\Theta_{1}^{(i_0)}=0$ if $X_0=i_0$.
    \item Each vertex $i \in V$ has its own clock, denoted by $\tilde{T}_{i}(t)$, which only runs when the process $\left(X_{t}\right)_{t \geq 0}$ is adjacent to $i$. That is, 
$\tilde{T}_{i}(t)=\sum_{k: k\sim i} T_{k}(t)$.
\item When the clock of a vertex $i \in V$ rings, i.e. when $\tilde{T}_{i}(t)=\Theta_{k}^{(i)}$ for some $k>0$, then $X_{t}$ jump to crosses $i$ instantaneously (this can happen only when $X\sim i$).
  \end{enumerate} 
Then $X$ is called the $(\Theta_{k}^{(i)})_{i\in V,k\in \NN^+}$-induced continuous-time random walk.
\end{definition}

\begin{remark}
 $\Theta^{(i)}_n$ is the time that $X$ needs to spend on the neighboring vertices of $i$ before the $n$-th jump to $i$. Given $(\Theta_{k}^{(i)})_{i\in V,k\in \NN^+}$, $X$ is indeed deterministic.
\end{remark}

 For $T\geq 0$ and $i \in V$, we define a distribution function by
$$
F^{(T)}_{W_i}(t)= \begin{cases}1-\frac{1}{e^{W_i(e^t-e^T)}}, &  t \geq T \\ 0, & t<T\end{cases}
$$
A comparison to the exponential distribution leads to the following.
\begin{proposition}
  \label{propeFtwi}
  Suppose that $\xi \sim F^{(T)}_{W_i}$, then $\xi > T$ a.s., and for $t>T$, 
\begin{equation}
  \label{newdistrirate}
  \lim_{\Delta t \to 0}\frac{\PP(t<\xi \leq t+\Delta t|\xi>t)}{\Delta t}=\frac{1}{1-F^{(T)}_{W_i}(t)}\frac{d F^{(T)}_{W_i}(t)}{dt}=W_ie^t
\end{equation}
The distribution function of $\xi-T$ is 
$$
F_{W_i,T}(a)= \begin{cases}1-\frac{1}{e^{W_ie^T(e^a-1)}}, &  a \geq 0 \\ 0, & a<0\end{cases}
$$
and in particular, $\EE (\xi - T)\leq 1/(W_ie^T)$ and $\EE (\xi - T)^2\leq 2/(W_ie^T)^2$.
\end{proposition}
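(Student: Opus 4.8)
The plan is to verify the four assertions by direct computation from the explicit formula, writing $F := F^{(T)}_{W_i}$ throughout; the final moment bounds will follow from a stochastic domination comparison with an exponential law. First, since $F(T) = 1 - e^{-W_i(e^T - e^T)} = 0$ and $F$ is continuous at $T$, we get $\PP(\xi \leq T) = F(T) = 0$, so $\xi > T$ almost surely. For the hazard rate, the middle equality in (\ref{newdistrirate}) is the elementary fact that $\PP(t < \xi \leq t + \Delta t \mid \xi > t) = (F(t + \Delta t) - F(t))/(1 - F(t))$ whenever $F(t) < 1$, so the limit as $\Delta t \to 0$ equals $F'(t)/(1 - F(t))$; and for $t > T$ one computes $F'(t) = W_i e^t e^{-W_i(e^t - e^T)}$ and $1 - F(t) = e^{-W_i(e^t - e^T)}$, whence the quotient is $W_i e^t$.

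For the distribution of $\xi - T$: when $a \geq 0$, $\PP(\xi - T \leq a) = F(a + T) = 1 - e^{-W_i(e^{a+T} - e^T)} = 1 - e^{-W_i e^T(e^a - 1)}$, and when $a < 0$, $a + T < T$ so $\PP(\xi - T \leq a) = 0$; this is exactly $F_{W_i, T}$. For the moments, the tail $\PP(\xi - T > a) = e^{-W_i e^T(e^a - 1)}$ on $\{a \geq 0\}$ is bounded above by $e^{-W_i e^T a}$ using $e^a - 1 \geq a$, so $\xi - T$ is stochastically dominated by an $\mathrm{Exp}(W_i e^T)$ random variable $\eta$; hence $\EE(\xi - T)^k \leq \EE \eta^k$ for $k = 1, 2$, which yields $\EE(\xi - T) \leq 1/(W_i e^T)$ and $\EE(\xi - T)^2 \leq 2/(W_i e^T)^2$. (One may equally integrate the tail directly, $\EE(\xi - T)^k = \int_0^\infty k a^{k-1}\PP(\xi - T > a)\,da$.)

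I do not expect a genuine obstacle here: each step is a one- or two-line computation. The only points needing minor care are the continuity argument showing the law of $\xi$ has no atom at $T$, and using $e^a - 1 \geq a$ only on $\{a \geq 0\}$ so that the exponential stochastic domination in the last step is valid.
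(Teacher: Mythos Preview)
Your proof is correct. The paper does not give a proof of this proposition, merely remarking beforehand that ``a comparison to the exponential distribution leads to the following''; your approach---direct differentiation for the hazard rate and stochastic domination by $\mathrm{Exp}(W_i e^T)$ via $e^a-1\ge a$ for the moment bounds---is exactly the intended comparison and is entirely standard.
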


\begin{proposition}
  \label{proptimeline1}
On each site $i$, sample $\Theta_{1}^{(i)} \sim F^{(0)}_{W_i}$. For each $n\in \NN^+$, given $\Theta_{n}^{(i)}$, sample $\Theta_{n+1}^{(i)} \sim F^{(\Theta_{n}^{(i)})}_{W_i}$. Let $X$ be the $(\Theta_{k}^{(i)})_{i\in V,k\in \NN^+}$-induced continuous-time random walk as in Definition \ref{timelinesdef}. Then $X$ is a cVRRW with weights $\left(W_{i}\right)_{i \in V}$.
\end{proposition}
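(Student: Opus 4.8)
The plan is to show directly that the $(\Theta_k^{(i)})$-induced walk $X$ has, at every time $t$, jump rate exactly $W_j e^{N_j(\mathbf{T}(t))}$ from $X_t$ to each neighbour $j$ of $X_t$; since $(X_t,\mathbf{T}(t))_{t\ge 0}$ is a piecewise-deterministic Markov process whose law is determined by these jump rates (a prescribed function of the current state), by Definition \ref{defcvrrw} this identifies $X$ with the cVRRW. The computation that does all the work is the hazard-rate identity (\ref{newdistrirate}) of Proposition \ref{propeFtwi}: if the most recent alarm on site $i$ rang when its clock $\tilde{T}_i$ read $\Theta_n^{(i)}$, then conditionally on $\Theta_n^{(i)}$ and on the next alarm not having rung yet, the next alarm rings at instantaneous rate $W_i e^{s}$ once the clock reaches the value $s\ge\Theta_n^{(i)}$. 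Equivalently, in its own clock-time the alarm sequence on $i$ is an inhomogeneous Poisson process of intensity $W_i e^{s}\,ds$ on $(0,\infty)$, a law unaffected by the resampling performed at each ring --- this ``memorylessness across resets'' is exactly what Proposition \ref{propeFtwi} provides, since $F^{(\Theta_n^{(i)})}_{W_i}$ depends on the past of the alarm chain only through $\Theta_n^{(i)}$.

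Next I would tie the clocks to the local times. By construction the clock of $i$ satisfies $\tilde{T}_i(t)=\sum_{k\sim i}T_k(t)=N_i(\mathbf{T}(t))$, and $t\mapsto\tilde{T}_i(t)$ is piecewise linear with slope $1$ when $X_t\sim i$ and slope $0$ otherwise. Fix $t$, put $j=X_t$, and condition on $\mathcal{F}_t=\sigma(X_s:s\le t)$; since $X$ is a deterministic functional of $(\Theta_k^{(i)})$, this is equivalent to conditioning on the alarm points that have already triggered a jump together with the information that the remaining alarms have not rung, so by independence of the alarm families across sites and the Markov structure of each chain, the next alarm on $i$ is conditionally distributed as $F^{(\Theta_{n_i}^{(i)})}_{W_i}$ conditioned to exceed $\tilde{T}_i(t)$, where $n_i$ is the number of visits to $i$ up to time $t$. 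For $i\sim j$ all the clocks $\tilde{T}_i$ advance at unit rate just after $t$, so by (\ref{newdistrirate}) the alarm on $i$ rings during $[t,t+\Delta]$ with probability $W_i e^{N_i(\mathbf{T}(t))}\,\Delta+o(\Delta)$; alarms on sites not adjacent to $j$ cannot ring (their clocks are momentarily frozen), and by the rule in Definition \ref{timelinesdef} a ring can only move $X$ to the corresponding adjacent site. Independence across the finitely many neighbours of $j$ makes the probability of two rings in $[t,t+\Delta]$ of order $\Delta^2$, so $X$ jumps from $j$ to $i\sim j$ at rate $W_i e^{N_i(\mathbf{T}(t))}$; as these rates depend on the past only through $(X_t,\mathbf{T}(t))$, the conclusion follows.

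The main obstacle is upgrading this informal ``competing alarms'' picture to a clean argument, because the clocks $\tilde{T}_i$ are random, adapted time changes that advance simultaneously rather than producing a sequence of independent waiting times. The rigorous formulation is that, pushing each clock-time alarm point process forward through the time change $t\mapsto\tilde{T}_i(t)$ and superposing over $i$, one obtains a counting process whose predictable compensator is $\int_0^t\sum_{i\sim X_s}W_i e^{N_i(\mathbf{T}(s))}\,ds$ --- precisely the compensator of the cVRRW jump measure --- and one must check that each $\tilde{T}_i$ is $\mathcal{F}_t$-adapted, that the alarm family on $i$ is independent of everything driving the other clocks, and that no two rings occur simultaneously. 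A secondary point is to confirm that the induced walk is genuinely well defined: no instantaneous double jump (immediately after $X$ reaches $i$ the clock $\tilde{T}_i$ is momentarily frozen, and almost surely no alarm point sits exactly at the current clock reading), and no explosion on bounded intervals (the total jump rate out of $X_t$ is at most $\deg(X_t)\,\bigl(\max_{i\sim X_t}W_i\bigr)\,e^{t}$ because $N_i(\mathbf{T}(t))\le\sum_{k}T_k(t)=t$, which precludes infinitely many jumps on any finite time interval).
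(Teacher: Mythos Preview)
Your proposal is correct and follows essentially the same approach as the paper: the paper's proof is the single line ``This is a direct consequence of (\ref{newdistrirate}) and Definition \ref{defcvrrw},'' and your argument is a careful unpacking of precisely that --- using the hazard-rate identity of Proposition~\ref{propeFtwi} to verify that the induced walk has jump rate $W_j e^{N_j(\mathbf{T}(t))}$ to each neighbour $j$, which is the defining property of the cVRRW. Your additional discussion of well-definedness (no simultaneous rings, no explosion) and of the compensator formulation goes beyond what the paper states, but is in the same spirit.
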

\begin{proof}
  This is a direct consequence of (\ref{newdistrirate}) and Definition \ref{defcvrrw}.
\end{proof}
Note that if $\{U(t), t \geq 0\}$ is a Poisson process with unit rate, then $\{U\left(W_x\left(e^t-1\right)\right), t\geq 0\}$ is a point process with intensity measurable $W_x e^t d t$. This gives another way to generate $(\Theta_n^{(i)})$ in Proposition \ref{proptimeline1}.
\begin{corollary}
  \label{wevexpsum}
  Let $\{U_x(t), t\geq 0\}_{x\in V}$ be independent unit-rate Poisson processes. Define a continuous time random walk $(\widetilde{X})_{t\geq 0}$ as follows: it jumps to $x \in V$ at time $t$ if $$U_x(W_x(e^{N_x(\widetilde{\mathbf{T}}(t))}-1))=U_x(W_x(e^{N_x(\widetilde{\mathbf{T}}(t-))}-1))+1$$ where $\widetilde{\mathbf{T}}(t)$ is its local times at time $t$. Then $(\widetilde{X})_{t\geq 0}$ is a cVRRW with weights $(W_x)_{x\in V}$. In particular, let $\xi_1,\xi_2,\cdots$ be i.i.d. Exp(1)-distributed random variables, define $\tilde{\Theta}_{n}^{(i)}$ by
  $$
  W_i(e^{\tilde{\Theta}_{n}^{(i)}}-1)=\xi_1+\xi_2+\cdots+\xi_n, \quad n=1,2,\cdots
  $$
  Let $\{\Theta_{n}^{(i)}\}$ be as in Proposition \ref{proptimeline1}. Then, $\{\Theta_n^{(i)}\}$ and $\{\tilde{\Theta}_{n}^{(i)}\}$ have the same law.
\end{corollary}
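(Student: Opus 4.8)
\emph{Proof idea.} The plan is to identify the walk $\widetilde X$ with the timelines-induced walk of Definition~\ref{timelinesdef} driven by the point processes $(\Theta_n^{(i)})$ of Proposition~\ref{proptimeline1}, and to read off the last claim as the resulting identity in law of driving processes. The bridge between the two descriptions is a single deterministic time change: for each vertex $i$ put $\phi_i(t):=W_i(e^t-1)$, a strictly increasing bijection of $[0,\infty)$ fixing $0$ that carries the intensity $W_ie^t\,dt$ onto the unit intensity $dt$; everything comes down to the elementary identity $W_i(e^t-e^s)=\phi_i(t)-\phi_i(s)$. Granting the identification of $\widetilde X$ with a timelines-induced walk whose driving processes have the law of Proposition~\ref{proptimeline1}, the first assertion is immediate from Proposition~\ref{proptimeline1}, and the ``in particular'' part is the special case of that identity of laws spelled out.

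For the first assertion, let $0<\tau_1^{(x)}<\tau_2^{(x)}<\cdots$ be the arrival times of $U_x$, i.e.\ the partial sums of i.i.d.\ $\mathrm{Exp}(1)$ variables, and set $\Theta_n^{(x)}:=\phi_x^{-1}(\tau_n^{(x)})$. In Definition~\ref{timelinesdef} the clock of $x$ is $\tilde T_x(t)=\sum_{k\sim x}T_k(t)=N_x(\mathbf{T}(t))$, which is continuous and increases exactly when $X\sim x$. The defining rule for $\widetilde X$ says precisely that $\widetilde X$ jumps onto $x$ at those times $t$ at which the continuous increasing clock-argument $\phi_x(\tilde T_x(t))=W_x(e^{N_x(\widetilde{\mathbf{T}}(t))}-1)$ crosses some $\tau_n^{(x)}$, equivalently at which $\tilde T_x(t)$ reaches $\Theta_n^{(x)}$; hence $\widetilde X$ is exactly the $(\Theta_n^{(x)})_{x\in V,\,n\geq1}$-induced walk. (As in Definition~\ref{timelinesdef} this self-referential construction is well posed and non-exploding: the number of jumps onto $x$ before time $t$ equals $U_x(\phi_x(\tilde T_x(t)))\leq U_x(\phi_x(t))<\infty$ and $V$ is finite.) It then suffices to check that $(\Theta_n^{(x)})$ has the joint law of Proposition~\ref{proptimeline1}. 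By the time-change remark preceding the corollary, $\{\phi_x^{-1}(\tau_n^{(x)})\}_n$ is a renewal-type point process of intensity $W_xe^t\,dt$; concretely $\PP(\Theta_1^{(x)}\le t)=\PP(\tau_1^{(x)}\le\phi_x(t))=1-e^{-W_x(e^t-1)}=F^{(0)}_{W_x}(t)$, and conditionally on $\Theta_n^{(x)}=s$ and independently of $\Theta_1^{(x)},\dots,\Theta_{n-1}^{(x)}$ one has $\PP(\Theta_{n+1}^{(x)}>t)=\PP\bigl(U_x\text{ has no point in }(\phi_x(s),\phi_x(t)]\bigr)=e^{-(\phi_x(t)-\phi_x(s))}=e^{-W_x(e^t-e^s)}=1-F^{(s)}_{W_x}(t)$. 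These are exactly the conditional laws prescribed in Proposition~\ref{proptimeline1}, and the processes attached to distinct vertices are independent in both models; so the two families coincide in law, and Proposition~\ref{proptimeline1} gives that $\widetilde X$ is a cVRRW with weights $(W_x)_{x\in V}$.

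For the ``in particular'' part, with $\phi_i$ as above, $\phi_i(\tilde\Theta_n^{(i)})=\xi_1+\cdots+\xi_n$ holds by definition, so $\{\phi_i(\tilde\Theta_n^{(i)})\}_n$ is the partial-sum process of i.i.d.\ $\mathrm{Exp}(1)$ variables. On the other hand the computation just made (which uses only $W_x(e^t-e^s)=\phi_x(t)-\phi_x(s)$) shows that if $\{\Theta_n^{(i)}\}_n$ has the law of Proposition~\ref{proptimeline1} then $\phi_i(\Theta_1^{(i)})\sim\mathrm{Exp}(1)$ and $\phi_i(\Theta_{n+1}^{(i)})-\phi_i(\Theta_n^{(i)})\sim\mathrm{Exp}(1)$ independently of $\phi_i(\Theta_1^{(i)}),\dots,\phi_i(\Theta_n^{(i)})$, so $\{\phi_i(\Theta_n^{(i)})\}_n$ is likewise the partial-sum process of i.i.d.\ $\mathrm{Exp}(1)$ variables. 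Since $\phi_i$ is a fixed increasing bijection and the coordinates $i\in V$ are independent in both models, $\{\Theta_n^{(i)}\}_{i,n}$ and $\{\tilde\Theta_n^{(i)}\}_{i,n}$ have the same law.

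There is no serious analytic difficulty here; the content is the bookkeeping that matches the defining rule of $\widetilde X$ with the timelines mechanism of Definition~\ref{timelinesdef} plus the change of variables $W_x(e^t-e^s)=\phi_x(t)-\phi_x(s)$. The one point deserving attention is that ``point process of intensity $W_xe^t\,dt$'' is used in the strong renewal/Poisson sense (the mean measure alone would not determine the law), which is precisely what both Proposition~\ref{proptimeline1} and the time-changed process $U_x(\phi_x(\cdot))$ provide; and that the self-referential construction of $\widetilde X$, in which its own local times $\widetilde{\mathbf{T}}$ enter its jump rule, is well posed and non-exploding, which is immediate once $V$ is finite and the $U_x$ are locally finite.
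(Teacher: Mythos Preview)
Your proof is correct and follows essentially the same approach as the paper: the time change $\phi_x(t)=W_x(e^t-1)$ and the observation that $W_i(e^{\Theta_{n+1}^{(i)}}-e^{\Theta_n^{(i)}})\sim\mathrm{Exp}(1)$ given the past are exactly what the paper uses, and your computation of the conditional laws is the detailed version of what the paper leaves implicit. The only minor difference is that for the first assertion the paper appeals directly to the cVRRW jump-rate definition (Definition~\ref{defcvrrw}), whereas you route through the timelines construction and Proposition~\ref{proptimeline1}; both are correct, and the paper's remark immediately preceding the corollary already flags your route as an alternative. One small imprecision: you invoke ``$V$ is finite'' for well-posedness, but the paper works on locally finite graphs; local finiteness suffices for your bound on the number of jumps.
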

\begin{proof}
  The first assertion follows from the definition of the cVRRW. Note that the second assertion could also be deduced from the fact that given $(\Theta_{1}^{(i)}, \Theta_{2}^{(i)}, \cdots, \Theta_{n}^{(i)}) $, $W_i(e^{\Theta_{n+1}^{(i)}}-e^{\Theta_{n}^{(i)}})$ has an exponential distribution with unit rate. 
\end{proof}

 We may embed a VRRW into a continuous time process. We construct a continuous-time process $\widetilde{X}_t$ as in Definition \ref{timelinesdef} where for $i\in V$, we set 
\begin{equation}
  \label{contvrrwvkexp}
  \Theta_{k}^{(i)}=\sum_{l=0}^{k-1} \frac{1}{a_{i}+l} \xi_{l}^{(i)}, \quad \forall k \geq 1
\end{equation}
where $\left(\xi_{k}^{(i)}\right)_{i \in V, k \geq 1}$ are i.i.d. Exp(1)-distributed random variables. Let $\eta_{n}$ be the $n$-th jump time of $\left(\widetilde{X}_{t}\right)_{t \geq 0}$, with the convention that $\eta_{0}:=0$. Tarr{\`e}s \cite{tarres2011localization} proved the following lemma which says that $\left(\tilde{X}_{\eta_{n}}\right)_{n \geq 0}$ is a VRRW with the initial local times $(a_i)_{i\in V}$. 
\begin{lemma}
  \label{lemtimelinevrrw}
  Let $\left(X_{n}\right)_{n \in \mathbb{N}}$ be a VRRW with initial local times $(a_i)_{i\in V}$ and let $\left(\widetilde{X}_{t}\right)_{t \geq 0}$ be as above. Assume that they start at some vertex $i_{0} \in V$. Then $\left(\widetilde{X}_{\eta_{n}}\right)_{n \geq 0}$ and $\left(X_{n}\right)_{n \geq 0}$ have the same distribution.
\end{lemma}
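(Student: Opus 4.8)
The plan is to prove this by a Rubin-type competing-clocks argument, by induction on the jump number $n$. For the embedded walk put $v_n:=\widetilde X_{\eta_n}$, let $c_k(n):=\#\{0\le m\le n:\widetilde X_{\eta_m}=k\}$ be its number of visits to $k\in V$ by step $n$, set $\tilde Z_k(n):=a_k+c_k(n)$, and let $\tilde T_k$ be the clock of $k$ from Definition \ref{timelinesdef}. Each ring of the alarm of $k$ coincides with a visit of the embedded walk to $k$ (using the convention $\Theta_1^{(i_0)}=0$ for the starting vertex, absorbed by a trivial relabelling of its alarms), so $\Theta^{(k)}_{c_k(n)}\le\tilde T_k(\eta_n)<\Theta^{(k)}_{c_k(n)+1}$ and the residual alarm time $\rho^{(n)}_k:=\Theta^{(k)}_{c_k(n)+1}-\tilde T_k(\eta_n)$ is strictly positive. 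The invariant I would carry is: conditionally on $\mathcal G_n:=\sigma(\widetilde X_s:0\le s\le\eta_n)$, the family $(\rho^{(n)}_k)_{k\in V}$ is independent with $\rho^{(n)}_k\sim\mathrm{Exp}(\tilde Z_k(n))$. The mechanism making this plausible is that the increment $\Theta^{(k)}_{c+1}-\Theta^{(k)}_c=\xi^{(k)}_c/(a_k+c)$ is $\mathrm{Exp}(a_k+c)$, so when $k$ has been visited exactly $c$ times the pending increment is $\mathrm{Exp}(\tilde Z_k(n))$, and by memorylessness whatever part of it has already elapsed is irrelevant.

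The analytic engine is the elementary fact about finitely many independent exponentials $E_k\sim\mathrm{Exp}(\lambda_k)$: the minimum is $\mathrm{Exp}(\sum_k\lambda_k)$, the minimizer equals $k$ with probability $\lambda_k/\sum_l\lambda_l$, and conditionally on the minimizer and the value of the minimum the shifted losers are again independent $\mathrm{Exp}(\lambda_k)$, jointly independent of the minimizer and the minimum (together with the plain memorylessness of one exponential). The base case $(\star_0)$ is immediate: at $\eta_0=0$ nothing has run, $\rho^{(0)}_k=\Theta^{(k)}_1$ for $k\ne i_0$ and $\rho^{(0)}_{i_0}=\Theta^{(i_0)}_2$, which are independent exponentials of the rates $a_k=\tilde Z_k(0)$ and $a_{i_0}+1=\tilde Z_{i_0}(0)$.

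For the inductive step, on $[\eta_n,\eta_{n+1})$ the walk sits at $v:=v_n$, so $\tilde T_k$ runs at unit rate exactly for $k\sim v$ and is frozen otherwise (in particular for $k=v$); hence $\eta_{n+1}-\eta_n=\min_{k\sim v}\rho^{(n)}_k$ and $\widetilde X_{\eta_{n+1}}$ is the minimizing neighbour $j^\star$. Feeding $(\rho^{(n)}_k)_{k\sim v}$ from $(\star_n)$ into the exponential lemma yields $\PP(\widetilde X_{\eta_{n+1}}=k\mid\mathcal G_n)=\mathds{1}_{k\sim v}\,\tilde Z_k(n)/\sum_{l\sim v}\tilde Z_l(n)$, which is exactly the transition rule of the VRRW with initial local times $(a_i)$; since the right-hand side is measurable with respect to $\sigma(\widetilde X_{\eta_0},\dots,\widetilde X_{\eta_n})\subseteq\mathcal G_n$ it also gives the conditional law given $(\widetilde X_{\eta_0},\dots,\widetilde X_{\eta_n})$, and since these transition probabilities pin down the law, $(\widetilde X_{\eta_n})_{n\ge0}\overset{d}{=}(X_n)_{n\ge0}$. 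To close the induction one checks $(\star_{n+1})$ case by case: for $k\sim v$ with $k\ne j^\star$ the new residual is the shifted loser $\rho^{(n)}_k-(\eta_{n+1}-\eta_n)$, kept independent and $\mathrm{Exp}(\tilde Z_k(n))=\mathrm{Exp}(\tilde Z_k(n+1))$ by the exponential lemma; for $k\not\sim v$ (including $k=v$) the residual is unchanged and, being conditionally independent of $(\rho^{(n)}_l)_{l\sim v}$ hence of $(j^\star,\eta_{n+1}-\eta_n)$, stays $\mathrm{Exp}(\tilde Z_k(n))=\mathrm{Exp}(\tilde Z_k(n+1))$; and for $k=j^\star$, whose alarm has just rung, the new residual is the next increment of the alarm sequence of $j^\star$, a fresh $\mathrm{Exp}(\tilde Z_{j^\star}(n+1))$ variable independent of $\mathcal G_{n+1}$ and of the other residuals. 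As $\mathcal G_{n+1}=\mathcal G_n\vee\sigma(j^\star,\eta_{n+1}-\eta_n)$, this is $(\star_{n+1})$.

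The step I expect to be the real work is precisely this last bookkeeping. One is forced to track the residual alarm of \emph{every} vertex rather than only those adjacent to the current one, so that the frozen clock of the just-departed vertex $v_n$ --- which is a neighbour of $v_{n+1}$ --- carries its exponential status into the next step; and one must verify that enlarging the conditioning from $\mathcal G_n$ to $\mathcal G_{n+1}$, i.e. revealing $j^\star$ and the holding time, does not break independence, which is exactly what the ``loss of memory at the minimum'' half of the exponential lemma provides, once one notes that $(j^\star,\eta_{n+1}-\eta_n)$ is a function of the clocks of the neighbours of $v_n$ alone and that the post-ring increment of $j^\star$ uses an as-yet-unread exponential. (An alternative route is to combine Corollary \ref{wevexpsum} with the beta--gamma algebra to identify, for each $i$, the law of $(\Theta^{(i)}_n)_n$ with that of the alarm sequence of a cVRRW of weight $W_i\sim\mathrm{Gamma}(a_i,1)$ and then quote Proposition \ref{vrrwmixcvrrw}; but that presupposes what it would help prove, so it only serves as a consistency check.)
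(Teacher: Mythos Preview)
Your argument is correct and is precisely the standard Rubin competing-clocks computation that the paper defers to by citing Lemma~3.2 of \cite{tarres2011localization}; the paper itself gives no independent proof, and your induction on the invariant ``conditionally on $\mathcal G_n$, the residual alarms $(\rho^{(n)}_k)_{k\in V}$ are independent $\mathrm{Exp}(\tilde Z_k(n))$'' is exactly the argument one expects that reference to contain. Your bookkeeping is clean, in particular the point that one must carry the exponential residual at \emph{every} vertex (not only the current neighbours) so that the just-vacated site $v_n$ enters the next race with the correct law.
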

\begin{proof}
  The proof is similar to that of Lemma 3.2 in \cite{tarres2011localization}.
\end{proof}

 \begin{proof}[Proof of Proposition \ref{vrrwmixcvrrw}]
  Given Lemma \ref{lemtimelinevrrw}, the proof is then similar to that of Lemma 4.7 in \cite{tarres2011localization} and Theorem 1 in \cite{MR3420510}.
\end{proof}

\section{Connection with Markov jump process}
\label{conMJPsec}

It will be helpful to view the cVRRW $(X_{t})_{t\geq 0}$ as the marginal of the process $\left(X_{t}, \mathbf{T}(t)\right)_{t\geq 0}$, since the joint process $\left(X_{t}, \mathbf{T}(t)\right)_{t\geq 0}$ is a Markov process. Indeed, it is easy to check that $\left(X_{t}, \mathbf{T}(t)\right)_{t\geq 0}$ is a Feller process. We complete the natural filtration $\sigma(X_s:0\leq s\leq t)$ by adding all the $\PP$-negligible sets. We denote the completed filtration by $(\FF_t)_{t\geq 0}$ with respect to which the Markov property is preserved, see e.g. Proposition 2.14, Chapter 3 in \cite{MR1083357}.  Moreover, by Proposition 2.10 in Chapter 3 in \cite{MR1083357}, $(\FF_t)_{t\geq 0}$ is right-continuous.

When $G$ is finite, for fixed $\mathbf{T}\in \RR^{V}$, we consider a continuous-time Markov chain $A=(A_t)_{t\geq 0}$ with jump rate from $i$ to $j$ equal to $W_je^{N_j(\mathbf{T})}$. That is, its generator $L(\mathbf{T})$ is given by 
\begin{equation}
  \label{LTdef}
  L(\mathbf{T})(g)(i) = \sum_{j: j\sim i} W_j e^{N_j(\mathbf{T})}(g(j) - g(i)), \quad g \in \RR^V
\end{equation}

By a slight abuse of notation, we also use the notation $L(\mathbf{T})$ for the $|V| \times |V|$ matrix of that operator w.r.t. the canonical basis $\{e_i:=\left(1_{j=i}\right)_{j \in V}\}_{i\in V}$. That is, $L(\mathbf{T})(i,j)=W_j e^{N_j(\mathbf{T})}$ if $i\sim j$, and equals 0 otherwise. Note that the generator of the cVRRW $(X_t,\mathbf{T}(t))$ is 
\begin{equation}
  \label{twogenerpartialT}
 (\mathcal{L} f)(i, \mathbf{T})=L(\mathbf{T})f(\cdot, \mathbf{T}))(i)+\frac{\partial f(i, \mathbf{T})}{\partial T_{i}} 
\end{equation}

\begin{proposition}
  \label{cmcstadis}
 $A$ is irreducible, positive recurrent, and reversible. The stationary distribution is given by $(\pi_i(\mathbf{T}))_{i\in V}$ defined in (\ref{statidisHt}).
\end{proposition}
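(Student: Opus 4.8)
The plan is to verify the four assertions in turn, the heart of the matter being a one-line detailed-balance check. First I would note that the jump rate attached to an edge $i\sim j$, namely $q(i,j):=W_j e^{N_j(\mathbf{T})}$, is strictly positive because every $W_j>0$ and the exponential is positive; consequently $q(i,j)>0$ precisely when $i\sim j$, which is a symmetric relation. Since $G$ is connected and now finite, any two vertices are joined by a path in $G$ along which each transition has positive rate, so $A$ is irreducible.

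Positive recurrence is then automatic: an irreducible continuous-time Markov chain on the finite set $V$ has bounded rates, hence is non-explosive, and an irreducible chain on a finite state space is positive recurrent and possesses a unique stationary distribution.

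It remains to identify that distribution and to establish reversibility, which I would do simultaneously by checking detailed balance for $(\pi_i(\mathbf{T}))_{i\in V}$. Put $Z:=\sum_{x\in V}W_x e^{N_x(\mathbf{T})}$. For $i\sim j$,
$$\pi_i(\mathbf{T})\,q(i,j)=\frac{W_i e^{N_i(\mathbf{T})}}{Z}\,W_j e^{N_j(\mathbf{T})}=\frac{W_iW_j\,e^{N_i(\mathbf{T})+N_j(\mathbf{T})}}{Z}=\pi_j(\mathbf{T})\,q(j,i),$$
and when $i\not\sim j$ both sides are $0$. Thus $\pi(\mathbf{T})$ is reversible for $A$; summing the identity $\pi_i(\mathbf{T})q(i,j)=\pi_j(\mathbf{T})q(j,i)$ over $i\neq j$ recovers the balance equations $\sum_{i\neq j}\pi_i(\mathbf{T})q(i,j)=\pi_j(\mathbf{T})\sum_{i\neq j}q(j,i)$, so $\pi(\mathbf{T})$ is stationary, and by irreducibility it is the unique stationary distribution.

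Every step here is elementary, so I do not anticipate a genuine obstacle; the only things worth stating carefully are that the same normalizing constant $Z$ appears in $\pi_i(\mathbf{T})$ for all $i$, which is exactly why the product $\pi_i(\mathbf{T})q(i,j)$ comes out symmetric in $i$ and $j$, and that strict positivity of the rates is what upgrades connectedness of $G$ to irreducibility of $A$.
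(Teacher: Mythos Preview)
Your proof is correct and, if anything, more complete than the paper's. The paper's entire argument is the single line ``observe that $\int L(\mathbf{T})g\,d\pi(\mathbf{T})=0$ for all $g$,'' which verifies stationarity via the generator criterion but leaves irreducibility, positive recurrence, and reversibility implicit. You instead check detailed balance $\pi_i(\mathbf{T})q(i,j)=\pi_j(\mathbf{T})q(j,i)$, which simultaneously yields reversibility and stationarity, and you spell out why irreducibility and positive recurrence hold on the finite connected graph. Both approaches are elementary one-line computations at their core; yours has the minor advantage of addressing all four claims explicitly rather than only the last.
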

\begin{proof}
  Simply observe that for any $g\in \RR^V$, $\int L(\mathbf{T}) g d \pi(\mathbf{T})=0$ by (\ref{LTdef}).
\end{proof}

Let $I$ be the identity matrix and define the $|V| \times |V|$ matrix $\Pi(\mathbf{T})$ by 
$$
\Pi(\mathbf{T})_{i,j}=\pi_j(\mathbf{T}), \quad i,j \in V
$$
As in Section 2.2.3, \cite{aldous-fill-2014}, let us define, for all $\mathbf{T} \in \mathbb{R}^{V}$, 
\begin{equation}
  \label{QTdef}
  Q(\mathbf{T}):=-\int_{0}^{\infty}\left(e^{u L(\mathbf{T})}-\Pi(\mathbf{T})\right) d u
\end{equation}
which exists since the matrix of transition probabilities $P_u:=e^{u L(\mathbf{T})}$ converges towards $\Pi(\mathbf{T})$ at exponential rate when $|V|$ is finite. Similar notions have been used in the study of the VRRW, see Bena{\"\i}m \cite{MR1428513}, and Bena{\"\i}m, Tarr{\`e}s \cite{MR2932667}. Sabot and Tarr{\`e}s \cite{MR3420510} also used $Q(\mathbf{T})$ to study the vertex-reinforced jump process.

By definition, $Q(\mathbf{T})$ is a solution of the Poisson equation for the Markov chain $L(\mathbf{T})$,
\begin{equation}
  \label{poissonsolu}
 L(\mathbf{T}) Q(\mathbf{T})=Q(\mathbf{T}) L(\mathbf{T})=I-\Pi(\mathbf{T})
\end{equation}
Moreover, we have $\Pi(\mathbf{T}) Q(\mathbf{T})= Q(\mathbf{T})\Pi(\mathbf{T})=0$. In the sequel, we denote $E_{i}^{\mathbf{T}}$ the expectation with respect to the law of $(A)_{t\geq 0}$ defined by (\ref{LTdef}) with starting point $i\in V$.

\begin{proposition}
\label{QTformulatau}
  For all $\mathbf{T} \in \mathbb{R}^{V}$ and $i, j \in V$,  let $\tau_{j}$ be the first hitting time of $(A)_{t\geq 0}$ to vertex $j$ and write $\pi=\pi(\mathbf{T})$ for simplicity. Then,
  \begin{equation}
    \label{QTij}
   Q(\mathbf{T})_{i, j}=\pi_j \sum_{r \in V} \pi_r(E_{i}^{\mathbf{T}}\left(\tau_{j}\right)- E_{r}^{\mathbf{T}}\left(\tau_{j}\right)) 
  \end{equation}
\end{proposition}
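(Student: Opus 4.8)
The plan is to use the standard formula relating the fundamental matrix $Q(\mathbf T)$ of a finite reversible Markov chain to mean hitting times, applied to the chain generated by $L(\mathbf T)$ with stationary distribution $\pi=\pi(\mathbf T)$. First I would record the defining identity $L(\mathbf T)Q(\mathbf T)=I-\Pi(\mathbf T)$ together with $\Pi(\mathbf T)Q(\mathbf T)=0$, so that $Q(\mathbf T)$ is the particular solution of the Poisson equation that is "centered" against $\pi$: for each fixed $j$, the column vector $h:=Q(\mathbf T)_{\cdot,j}$ satisfies $L(\mathbf T)h=e_j\text{-coordinate minus }\pi_j$, i.e.\ $\bigl(L(\mathbf T)h\bigr)(i)=\mathds 1_{i=j}-\pi_j$, and $\sum_r\pi_r h(r)=0$.

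Next I would introduce, for fixed target $j$, the function $g(i):=\pi_j\,E_i^{\mathbf T}(\tau_j)$, with $g(j)=0$. The classical first-step analysis for mean hitting times gives, for $i\neq j$,
\[
\sum_{k:k\sim i}W_k e^{N_k(\mathbf T)}\bigl(E_i^{\mathbf T}(\tau_j)-E_k^{\mathbf T}(\tau_j)\bigr)=1,
\]
i.e.\ $\bigl(L(\mathbf T)g\bigr)(i)=-\pi_j$ for $i\neq j$. To handle $i=j$ one uses the Kac-type identity $E_j^{\mathbf T}(\tau_j^+)=1/\pi_j$ (mean return time), where $\tau_j^+$ is the first return time; a one-step decomposition of the return then yields $\bigl(L(\mathbf T)g\bigr)(j)=1-\pi_j$. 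Hence $L(\mathbf T)g=(I-\Pi(\mathbf T))e_j$ as a vector in $i$, exactly the same Poisson equation solved by $h=Q(\mathbf T)_{\cdot,j}$. Since $L(\mathbf T)$ is irreducible, its kernel on $\RR^V$ is spanned by the constant vector, so $g$ and $h$ differ by a constant: $h(i)=g(i)-c$ for all $i$, with $c=\sum_r\pi_r g(r)$ because $\sum_r\pi_r h(r)=0$. Substituting $g(r)=\pi_j E_r^{\mathbf T}(\tau_j)$ gives $c=\pi_j\sum_r\pi_r E_r^{\mathbf T}(\tau_j)$, and therefore
\[
Q(\mathbf T)_{i,j}=h(i)=\pi_j\Bigl(E_i^{\mathbf T}(\tau_j)-\sum_{r\in V}\pi_r E_r^{\mathbf T}(\tau_j)\Bigr)=\pi_j\sum_{r\in V}\pi_r\bigl(E_i^{\mathbf T}(\tau_j)-E_r^{\mathbf T}(\tau_j)\bigr),
\]
which is \eqref{QTij}.

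The only genuinely delicate point is the $i=j$ diagonal relation $\bigl(L(\mathbf T)g\bigr)(j)=1-\pi_j$, since $\tau_j$ as defined is $0$ when started at $j$; the clean way is to note that for the centered function $F(i):=E_i^{\mathbf T}(\tau_j)$ one has, by conditioning on the first jump out of every state including $j$ and using $E_j^{\mathbf T}(\tau_j^+)=1/\pi_j$, that $L(\mathbf T)F=\tfrac1{\pi_j}e_j-\mathbf 1$ (the "Green's function" identity), and multiplying by $\pi_j$ gives $L(\mathbf T)g=e_j-\pi_j\mathbf 1=(I-\Pi(\mathbf T))e_j$. Everything else is linear algebra: uniqueness of the Poisson solution up to additive constants (from irreducibility of $L(\mathbf T)$, guaranteed by Proposition \ref{cmcstadis}), plus the normalization $\Pi(\mathbf T)Q(\mathbf T)=0$ to pin down the constant. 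I would present the first-step/Kac identities as the substance of the argument and treat the matching of Poisson solutions as a short concluding paragraph.
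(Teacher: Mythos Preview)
Your argument is correct but takes a different route from the paper. The paper works directly with the integral definition $Q(\mathbf T)=-\int_0^\infty(e^{uL(\mathbf T)}-\Pi(\mathbf T))\,du$: applying the strong Markov property at $\tau_j$ to the transition kernel $P_u(i,j)$ yields in one stroke the identity
\[
Q(\mathbf T)_{i,j}=\pi_j\,E_i^{\mathbf T}(\tau_j)+Q(\mathbf T)_{j,j},
\]
after which $\Pi(\mathbf T)Q(\mathbf T)=0$ determines $Q(\mathbf T)_{j,j}=-\pi_j\sum_r\pi_r E_r^{\mathbf T}(\tau_j)$, and the formula follows. You instead characterize $Q(\mathbf T)_{\cdot,j}$ as the $\pi$-centered solution of the Poisson equation $L(\mathbf T)h=(I-\Pi(\mathbf T))e_j$, verify that $g(i)=\pi_j E_i^{\mathbf T}(\tau_j)$ solves the same equation, and match them up to the additive constant. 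The trade-off: the paper's route is shorter and sidesteps the diagonal case entirely (no need for the Kac return-time identity you correctly flag as the delicate step), while your route is more ``algebraic'' and makes the role of irreducibility and the uniqueness of Poisson solutions explicit. Both are standard; the paper's is the quicker one here.
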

\begin{proof}
  By the strong Markov property of $A$, one can show that 
  \begin{equation}
    \label{Qijexpress}
   Q(\mathbf{T})_{i, j}=\pi_j E_{i}^{\mathbf{T}}\left(\tau_{j}\right)+Q(\mathbf{T})_{j, j} 
  \end{equation}
  On the other hand,
  \begin{equation}
    \label{Qjj}
    Q(\mathbf{T})_{j, j}=\sum_{i \in V} \pi_iQ(\mathbf{T})_{j, j}=\sum_{i \in V} [\pi_i Q(\mathbf{T})_{i, j} - \pi_i\pi_j E_{i}^{\mathbf{T}}\left(\tau_{j}\right)]= -\pi_j\sum_{i \in V} \pi_i E_{i}^{\mathbf{T}}\left(\tau_{j}\right)
  \end{equation}
  where we used $\Pi(\mathbf{T})Q(\mathbf{T})=0$. Then, (\ref{QTij}) follows from (\ref{Qijexpress}) and (\ref{Qjj}).
\end{proof}
\begin{remark}
Proposition \ref{QTformulatau} should be compared to Lemma 2.11 and Lemma 2.12 in \cite{aldous-fill-2014} which are referred to as the mean hitting time formula for finite Markov chains there. 

Sabot and Tarr{\`e}s in Section 4.1 \cite{MR3420510} proved the case when the stationary distribution is the uniform distribution.
\end{remark}

\begin{proposition}
  \label{propfTstoapp}
  Let $(X_t)_{t\geq 0}$ be a cVRRW on a finite connected graph $G$ and 
  $$f: \RR^V \to \RR, \quad \mathbf{T} \mapsto f(\mathbf{T})$$ 
  be a twice continuously differentiable function function. Then,
  \begin{equation}
    \label{fTstoapp}
    \begin{aligned}
    &\quad f(\mathbf{T}(t))-f(\mathbf{T}(0))-\int_0^t  \pi(\mathbf{T}(u))\cdot \nabla f(\mathbf{T}(u)) d u\\
    &=[Q(\mathbf{T}(t))\nabla f(\mathbf{T}(t))]_{X_{t}}-[Q(\mathbf{T}(0))\nabla f(\mathbf{T}(0))]_{X_{0}}+M_{f}(t) \\
      &-\int_{0}^{t} [\frac{\partial Q(\mathbf{T}(u))}{\partial T_{X_{u}}} \nabla f(\mathbf{T}(u))]_{X_{u}} d u -\int_{0}^{t} [Q(\mathbf{T}(u))\frac{\partial }{\partial T_{X_{u}}}\nabla f(\mathbf{T}(u)) ]_{X_{u}} d u
      \end{aligned}
  \end{equation}
where $M_f(t)$ is a martingale with quadratic variation $\left\langle M_f\right\rangle_t=\int_0^t g_f\left(X_s, \mathbf{T}(s)\right) d s$ where
\begin{equation}
    \label{defgxtgeneral}
    g_{f}(x,\mathbf{T}):=\sum_{p: p\sim x} W_{p}e^{N_p(\mathbf{T})}([Q(\mathbf{T})\nabla f(\mathbf{T})]_{p}-[Q(\mathbf{T})\nabla f(\mathbf{T})]_{x})^2, \quad x\in V, \mathbf{T} \in \RR^V
  \end{equation}
\end{proposition}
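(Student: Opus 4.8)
The plan is to apply Dynkin's formula (the It\^o formula for piecewise-deterministic Markov processes) to the Feller process $(X_t,\mathbf{T}(t))_{t\ge 0}$, whose generator $\mathcal{L}$ is (\ref{twogenerpartialT}), at the test function
$$
g(i,\mathbf{T}):=f(\mathbf{T})-[Q(\mathbf{T})\nabla f(\mathbf{T})]_i .
$$
Here $\mathbf{T}$ is continuous and moves deterministically along $e_{X_t}$ between jumps, the only randomness being the jumps of $X$; so once $g$ is known to lie in (a suitable extension of) the domain of $\mathcal{L}$, Dynkin's formula gives that $g(X_t,\mathbf{T}(t))-g(X_0,\mathbf{T}(0))-\int_0^t(\mathcal{L}g)(X_u,\mathbf{T}(u))\,du$ is a martingale $M_g$, and rearranging this identity produces exactly (\ref{fTstoapp}) with $M_f=M_g$.

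The computation of $\mathcal{L}g$ uses only the Poisson equation. Viewing $g(\cdot,\mathbf{T})=f(\mathbf{T})\mathbf{1}-Q(\mathbf{T})\nabla f(\mathbf{T})$ as a vector in $\RR^V$, the matrix part $L(\mathbf{T})$ annihilates the constant vector and, by (\ref{poissonsolu}), $L(\mathbf{T})(g(\cdot,\mathbf{T}))(i)=-[(I-\Pi(\mathbf{T}))\nabla f(\mathbf{T})]_i=\pi(\mathbf{T})\cdot\nabla f(\mathbf{T})-[\nabla f(\mathbf{T})]_i$, since $[\Pi(\mathbf{T})\nabla f(\mathbf{T})]_i=\pi(\mathbf{T})\cdot\nabla f(\mathbf{T})$ for every $i$. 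The transport part contributes $\partial_{T_i}g(i,\mathbf{T})=[\nabla f(\mathbf{T})]_i-[\partial_{T_i}Q(\mathbf{T})\,\nabla f(\mathbf{T})]_i-[Q(\mathbf{T})\,\partial_{T_i}\nabla f(\mathbf{T})]_i$. Summing, the $[\nabla f(\mathbf{T})]_i$ terms cancel and $(\mathcal{L}g)(i,\mathbf{T})=\pi(\mathbf{T})\cdot\nabla f(\mathbf{T})-[\partial_{T_i}Q(\mathbf{T})\nabla f(\mathbf{T})]_i-[Q(\mathbf{T})\partial_{T_i}\nabla f(\mathbf{T})]_i$, so the three integral terms of (\ref{fTstoapp}) fall out after rearrangement. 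For the bracket process, observe that at a jump of $X$ from $i$ to $p$ the value of $g$ changes only through the vertex argument (because $\mathbf{T}$ is continuous), by the amount $[Q(\mathbf{T})\nabla f(\mathbf{T})]_i-[Q(\mathbf{T})\nabla f(\mathbf{T})]_p$, and the jump occurs at rate $W_pe^{N_p(\mathbf{T})}$; hence the predictable quadratic variation of the compensated jump martingale is $\langle M_g\rangle_t=\int_0^t\sum_{p\sim X_s}W_pe^{N_p(\mathbf{T}(s))}([Q(\mathbf{T}(s))\nabla f(\mathbf{T}(s))]_p-[Q(\mathbf{T}(s))\nabla f(\mathbf{T}(s))]_{X_s})^2\,ds=\int_0^t g_f(X_s,\mathbf{T}(s))\,ds$ with $g_f$ as in (\ref{defgxtgeneral}).

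The one step requiring real work is the regularity of $\mathbf{T}\mapsto Q(\mathbf{T})$: we must know it is $C^1$ both to make sense of $\partial_{T_i}Q(\mathbf{T})$ and to legitimately apply Dynkin's formula to $g$. Since $\mathbf{T}\mapsto L(\mathbf{T})$ and $\mathbf{T}\mapsto\Pi(\mathbf{T})$ are real-analytic and $L(\mathbf{T})$ has a spectral gap bounded below uniformly on compact sets, the integral (\ref{QTdef}) is real-analytic in $\mathbf{T}$; equivalently, differentiating the two relations $L(\mathbf{T})Q(\mathbf{T})=I-\Pi(\mathbf{T})$ and $\Pi(\mathbf{T})Q(\mathbf{T})=0$ in $T_k$ determines $\partial_{T_k}Q(\mathbf{T})$ uniquely with smooth coefficients. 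The unboundedness of $\mathbf{T}(t)$ as $t\to\infty$ is not an issue: on any finite interval $[0,t]$ each coordinate of $\mathbf{T}(u)$ lies in $[0,t]$, so $Q$, its first derivatives, $\nabla f$, $\nabla^2 f$ and the jump rates $W_pe^{N_p(\mathbf{T})}$ are all bounded there, which makes every integrand integrable, validates Dynkin's formula via a routine localization at the jump times, and shows $M_g$ is a genuine ($L^2$ on compacts) martingale. I expect the differentiability of $Q$ to be the only real obstacle; everything else is bookkeeping around the Poisson equation.
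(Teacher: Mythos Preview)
Your proposal is correct and follows essentially the same route as the paper: both derive the identity by applying Dynkin's formula for the Feller process $(X_t,\mathbf{T}(t))$ to the function $[Q(\mathbf{T})\nabla f(\mathbf{T})]_i$ (the paper) or equivalently to $g(i,\mathbf{T})=f(\mathbf{T})-[Q(\mathbf{T})\nabla f(\mathbf{T})]_i$ (you), and then use the Poisson equation $L(\mathbf{T})Q(\mathbf{T})=I-\Pi(\mathbf{T})$ to produce the $\pi\cdot\nabla f$ term. The only notable difference is in the quadratic variation: you invoke the standard carr\'e du champ formula for piecewise-deterministic Markov processes directly, whereas the paper proceeds more carefully via an Ethier--Kurtz absolute-continuity criterion and an explicit computation of $\frac{d}{ds}\langle M_f\rangle_s$ using the right-continuity of $(\mathcal{F}_t)$; both arrive at the same $g_f$.
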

\begin{proof}
First note that $f(\mathbf{T}(t))$ is absolutely continuous in $t$ and 
  $$
   f(\mathbf{T}(t))-f(\mathbf{T}(0))=\int_0^t \frac{d}{du}f(\mathbf{T}(u))du=\int_0^t \frac{\partial}{\partial T_{X_u}}f(\mathbf{T}(u))du=\int_0^t \sum_{x\in V} \frac{\partial }{\partial T_{x}}f(\mathbf{T}(u))\mathds{1}_{X_u=x} du
  $$
  Thus, the left-hand side of (\ref{fTstoapp}) equals
  \begin{equation}
    \label{stoappequf}
\int_0^t \sum_{x\in V}(\mathds{1}_{X_u=x}-\pi_x(\mathbf{T}(u))) \frac{\partial }{\partial T_{x}}f(\mathbf{T}(u)) du 
        =\int_{0}^{t} [(I-\Pi(\mathbf{T}(u)))\nabla f(\mathbf{T}(u))]_{X_u}du
  \end{equation}
  which, by (\ref{twogenerpartialT}) and (\ref{poissonsolu}), equals
  \begin{equation}
    \label{stoappequf1}
    \begin{aligned}
     &\quad \int_0^t(L(T(u)) Q(\mathbf{T}(u))\nabla f(\mathbf{T}(u)))_{X_u} d u\\
      &=\int_{0}^{t} \mathcal{L}\left(Q\nabla f(\cdot)_{\cdot }\right)\left(X_{u}, \mathbf{T}(u)\right) d u-\int_{0}^{t} \frac{\partial}{\partial T_{X_{u}}} Q\nabla f(\mathbf{T}(u))_{X_{u}} d u \\
  &=Q\nabla f(\mathbf{T}(t))_{X_{t}}-Q\nabla f(\mathbf{T}(0))_{X_{0}}+M_{f}(t)-\int_{0}^{t} \frac{\partial}{\partial T_{X_{u}}} Q\nabla f(\mathbf{T}(u))_{X_{u}} d u
    \end{aligned}
  \end{equation}
   where we wrote $Q\nabla f(\mathbf{T}(u))=Q(\mathbf{T}(u))\nabla f(\mathbf{T}(u))$ for simplicity of notation and 
  $$
  M_{f}(t):=-Q\nabla f(\mathbf{T}(t))_{X_{t}}+Q\nabla f(\mathbf{T}(0))_{X_{0}}+\int_{0}^{t} \mathcal{L}\left(Q\nabla f(\cdot)_{\cdot}\right)\left(X_{u}, \mathbf{T}(u)\right) d u
  $$
  is a martingale by Dynkin's formula. It remains to prove (\ref{defgxtgeneral}). First note that for any $t>0$, $|\mathcal{L}\left([Q\nabla f(\cdot)_{\cdot}]^2\right)|$ and $|\mathcal{L}\left(Q\nabla f(\cdot)_{\cdot}\right)|$ are bounded on $V \times [0,t]^{V}$ by our assumptions. Since $\left(X_u, \mathbf{T}(u)\right) \in V \times[0, t]^V$ for $u \leq t$, by applying Dynkin's formula to $[Q\nabla f(\mathbf{T}(t))_{X_{t}}]^2$, we have 
\begin{equation}
  \label{M2expbd}
    |\EE(M_f(r)^2-M_f(s)^2|\FF_s )|= |\EE([M_f(r)-M_f(s)]^2|\FF_s )|\leq K(t)(r-s), \quad 0\leq s\leq r \leq t
\end{equation}
for some constant $K(t)$. Ethier–Kurtz criterion (Corollary 7.4, Page 83, \cite{MR0838085}) then enables us to write $\langle M_{f}\rangle_t=\int_0^t\frac{d}{d s}\langle M_{f}\rangle_sds$ with $|\frac{d}{d s}\langle M_{f}\rangle|\leq K(t)$ for $s\leq t$. Note that $\frac{d}{d s}\langle M_{f}\rangle$ is $\FF_{s+}$ measurable. By the right-continuity of $(\FF_t)$ and the dominated convergence theorem, 
$$
\begin{aligned}
\frac{d}{d s}\langle M_{i}\rangle_{s} &= \EE(\lim_{\varepsilon\to 0+}\frac{\langle M_{i}\rangle_{s+\varepsilon}-\langle M_{i}\rangle_{s}}{\varepsilon}|\FF_s )= \lim_{\varepsilon\to 0+}\EE(\frac{\langle M_{i}\rangle_{s+\varepsilon}-\langle M_{i}\rangle_{s}}{\varepsilon}|\FF_s )\\ &= \lim_{\varepsilon\to 0+} \mathbb{E}\left(\frac{\left(M_{i}(s+\varepsilon)-M_{i}(s)\right)^{2}}{\varepsilon} \mid \mathcal{F}_{s}\right) \\
&=\left(\frac{d}{d \varepsilon} \mathbb{E}\left(\left(Q(\mathbf{T}(t+\varepsilon))_{X_{s+\varepsilon},i}-Q(\mathbf{T}(t))_{X_{s},i}\right)^{2} \mid \mathcal{F}_{s}\right)\right)_{\varepsilon=0} \\
&=\mathcal{L}\left([Q]^2(\cdot)_{\cdot,i}\right)\left(X_{s}, \mathbf{T}(s)\right)-2 Q(\mathbf{T}(s))_{X_s,i}\mathcal{L}\left(Q(\cdot)_{\cdot,i}\right)\left(X_{s}, \mathbf{T}(s)\right) \\
&=\sum_{p: p \sim X_s} W_p e^{N_p(\mathbf{T}(s))}\left(Q(\mathbf{T}(s))_{p, i}-Q(\mathbf{T}(s))_{X_s, i}\right)^2
\end{aligned}
$$
which completes the proof of (\ref{defgxtgeneral}).
\end{proof}

\section{Proofs of the main results}

When $G$ is finite, for any probability measure $y=(y_i)_{i\in V}$ on $V$ (i.e. $\sum_{i\in V}y_i=1$ and $y_i\geq 0$ for all $i\in V$), define $H(y):=\sum_{i\in V} y_i N_i(y)$. If $\mathbf{T}(t)$ is the local time process of a cVRRW, for simplicity of notation, we write $H(t):=H(\pi(\mathbf{T}(t)))$.

\subsection{cVRRW on complete graphs} 
\label{cVRRWcomsec} When $G$ is a complete graph, we have explicit formulas for $Q(\mathbf{T})_{i,j}$ defined in (\ref{QTdef}).
\begin{lemma}
  \label{QKd}
  Let $G=K_d$ $(d\geq 3)$ and fix $\mathbf{T} \in \RR^d$. Then, for any $x,y\in V$ such that $x\neq y$
  \begin{equation}
    \label{exphitKd}
    E_x^{\mathbf{T}} \tau_y = \frac{1}{ W_y e^{N_y(\mathbf{T})}}
  \end{equation}
and for $i,j\in V$,
$$
Q(\mathbf{T})_{i,j}=\frac{\pi_j(\mathbf{T})}{\sum_{x\in V}W_x e^{N_x(\mathbf{T})}}, \quad i\neq j; \quad Q(\mathbf{T})_{j,j}=-\frac{1-\pi_j(\mathbf{T})}{\sum_{x\in V}W_x e^{N_x(\mathbf{T})}}
$$
In particular, 
\begin{equation}
  \label{QestKd}
  \sup_{x,i\in V} \left( |Q(\mathbf{T})_{x,i}|, |\frac{\partial}{\partial T_{x}} Q(\mathbf{T})_{x, i}|\right) \leq \frac{1}{\sum_{x\in V}W_x e^{N_x(\mathbf{T})}} 
\end{equation}
\end{lemma}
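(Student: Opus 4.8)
The plan is to reduce the whole lemma to the explicit hitting-time identity (\ref{exphitKd}), which then feeds directly into the mean-hitting-time formula of Proposition \ref{QTformulatau}.

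\emph{Step 1: the hitting times.} Fix $y\in V$ and $\mathbf{T}$, and set $\lambda:=W_y e^{N_y(\mathbf{T})}$. Because $K_d$ is complete, from \emph{any} vertex $x\neq y$ the frozen chain $A$ has $y$ as a neighbour, and it jumps from $x$ directly to $y$ at rate exactly $\lambda$, with no dependence on $x$. By memorylessness of the competing exponential jump clocks, the residual time before the ``jump to $y$'' alarm rings stays $\mathrm{Exp}(\lambda)$ however many times $A$ first moves among the vertices of $V\setminus\{y\}$; hence under $E_x^{\mathbf{T}}$ the time $\tau_y$ is $\mathrm{Exp}(\lambda)$ for every $x\neq y$, so $E_x^{\mathbf{T}}\tau_y=1/\lambda$. (Equivalently, first-step analysis shows that the constant $c=1/\lambda$ solves $h_x=R_x^{-1}+\sum_{z\neq x,y}(W_z e^{N_z(\mathbf{T})}/R_x)\,h_z$ with $R_x=\sum_{z\neq x}W_z e^{N_z(\mathbf{T})}$, and this bounded solution is unique by the irreducibility of $A$ from Proposition \ref{cmcstadis}.)

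\emph{Step 2: the formula for $Q$.} Write $S:=\sum_{x\in V}W_x e^{N_x(\mathbf{T})}$, so $\pi_j(\mathbf{T})=W_j e^{N_j(\mathbf{T})}/S$. Inserting (\ref{exphitKd}) into (\ref{QTij}): for $i\neq j$, since $E_r^{\mathbf{T}}\tau_j=1/(W_j e^{N_j(\mathbf{T})})$ for all $r\neq j$ and $E_j^{\mathbf{T}}\tau_j=0$, the sum $\sum_r\pi_r(E_i^{\mathbf{T}}\tau_j-E_r^{\mathbf{T}}\tau_j)$ collapses to $\pi_j/(W_j e^{N_j(\mathbf{T})})$, whence $Q(\mathbf{T})_{i,j}=\pi_j^2/(W_j e^{N_j(\mathbf{T})})=\pi_j(\mathbf{T})/S$. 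For $i=j$ the same formula (with $E_j^{\mathbf{T}}\tau_j=0$) gives $Q(\mathbf{T})_{j,j}=-\pi_j(1-\pi_j)/(W_j e^{N_j(\mathbf{T})})=-(1-\pi_j(\mathbf{T}))/S$. These are the asserted expressions.

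\emph{Step 3: the bounds.} Since $0\le\pi_i(\mathbf{T})\le 1$, both $|Q(\mathbf{T})_{x,i}|\le 1/S$ (off-diagonal) and $|Q(\mathbf{T})_{j,j}|\le 1/S$ are immediate, giving the first half of (\ref{QestKd}). For the derivative I would first record that on $K_d$ we have $N_k(\mathbf{T})=\sum_{l\neq k}T_l$, so $\partial N_k/\partial T_x=\mathds{1}_{\{x\neq k\}}$; in particular $\partial(W_x e^{N_x(\mathbf{T})})/\partial T_x=0$ and $\partial S/\partial T_x=S-W_x e^{N_x(\mathbf{T})}$. Differentiating the explicit formulas from Step 2 and abbreviating $a:=W_x e^{N_x(\mathbf{T})}/S\in(0,1)$, a short computation yields
$$\frac{\partial}{\partial T_x}Q(\mathbf{T})_{x,x}=\frac{(1-a)(1-2a)}{S},\qquad \frac{\partial}{\partial T_x}Q(\mathbf{T})_{x,i}=\frac{\pi_i(\mathbf{T})\,(2a-1)}{S}\quad(i\neq x).$$
Since $|(1-a)(1-2a)|\le 1$ and $\pi_i(\mathbf{T})\,|2a-1|\le 1$ for $a\in(0,1)$, the bound on the partial derivatives in (\ref{QestKd}) follows.

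\emph{Main obstacle.} Everything here is bookkeeping once (\ref{exphitKd}) is established, so the only real content is Step 1, and the memorylessness argument I prefer uses the completeness of $K_d$ in an essential way: that $y$ is a one-step neighbour of every other vertex at a rate not depending on the current position. This fails on a general graph, which is precisely why the lemma is special to complete graphs. A minor pitfall in Step 3 is the slightly counterintuitive identity $\partial(W_x e^{N_x(\mathbf{T})})/\partial T_x=0$, valid because $x\not\sim x$ in $K_d$, which is exactly what makes the resulting polynomials in $a$ come out bounded by $1$.
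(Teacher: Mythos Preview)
Your proof is correct and follows essentially the same route as the paper: the paper's proof is the two-line ``(\ref{exphitKd}) follows from the definition of the Markov chain $A$; the rest follows from (\ref{QTij})'', and your Steps 1--2 are precisely a careful unpacking of those two sentences. Your Step 3, which explicitly differentiates the closed-form entries and bounds the resulting polynomials in $a=\pi_x$, goes beyond what the paper writes out (the paper leaves (\ref{QestKd}) implicit), but the computations are right and the approach is the natural one.
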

\begin{proof}
  (\ref{exphitKd}) follows from the definition of the Markov chain $A$. The rest follows from (\ref{QTij}).
\end{proof}

 For $\mathbf{T} \in \RR^V$, define
$$
V(\mathbf{T}):=\frac{1}{d}\sum_{i\in V}\log \pi_i(\mathbf{T})=- \log \sum_{x\in V}W_xe^{N_x(\mathbf{T})} +\frac{1}{d}\sum_{i\in V}\left[N_i(\mathbf{T})+\log W_i\right]
$$
Note that $-\log d -V(\mathbf{T})$ is a relative entropy and thus non-negative. It equals 0 iff $\pi(\mathbf{T})$ is the uniform measure. If $(\mathbf{T}(t))$ is the local time process of a cVRRW, for simplicity of notation, we write
 $$V(t):=V(\mathbf{T}(t))$$

Informally, one might regard $V(t)$ as a Lyapunov function for the dynamical system governing $\pi(\mathbf{T}(t))$. A more general form of $V(t)$ was originally introduced by Losert and Akin in 1983 in \cite{MR0714271} in the study of the deterministic Fisher-Wright-Haldane population genetics model, and used by Bena{\"\i}m and Tarr{\`e}s \cite{MR2932667} in the study of VRRW. In the proof of the following result, we use a technique originally introduced by Métivier and Priouret \cite {MR0873887}, and adapted by Bena{\"\i}m \cite{MR1428513} in the context of vertex reinforcement.

\begin{theorem}
  \label{dcomuni}
  Let $X=(X_t)_{t\geq 0}$ be a cVRRW on $G=K_d$ $(d\geq 3)$. Almost surely, for any $i\in V$, $\lim_{t\to \infty}\pi_i(\mathbf{T}(t))=1/d$.
\end{theorem}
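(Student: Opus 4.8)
The plan is to run the standard ODE/Lyapunov-function method for stochastic approximation, using $V(t)=V(\mathbf{T}(t))$ as a Lyapunov function and Proposition \ref{propfTstoapp} applied to $f=V$. First I would compute the drift term $\pi(\mathbf{T})\cdot\nabla V(\mathbf{T})$. Writing $\pi=\pi(\mathbf{T})$, a direct calculation gives $\partial V/\partial T_x = \tfrac1d\sum_i \partial_{T_x}\log\pi_i = \tfrac1d\sum_{i\sim x}1 - \sum_i \pi_i \mathbf 1_{i\sim x} = \tfrac{d-1}{d} - N_x(\pi)$ on $K_d$ (where $N_x(\pi)=\sum_{k\sim x}\pi_k = 1-\pi_x$), so $\partial V/\partial T_x = \pi_x - \tfrac1d$. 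Hence $\pi\cdot\nabla V = \sum_x \pi_x(\pi_x-\tfrac1d) = \|\pi\|^2 - \tfrac1d = \sum_x(\pi_x-\tfrac1d)^2 \geq 0$, vanishing exactly when $\pi$ is uniform. Thus $V$ increases on average, and $-\log d - V(t)\geq 0$ is a relative entropy, so $V(t)$ is bounded above; the integral $\int_0^\infty \|\pi(\mathbf{T}(u))-z^*\|^2\,du$ is the object I want to control.

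Next I would show the error terms in \eqref{fTstoapp} are negligible. By Lemma \ref{QKd}, $Q(\mathbf{T})$ and its relevant $T$-derivatives are bounded by $1/\sum_x W_x e^{N_x(\mathbf{T})}$, which decays exponentially in $t$ since $N_x(\mathbf{T}(t))\to\infty$ (each $N_x$ grows at least linearly because $X$ spends positive total time on $K_d$); also $\nabla V$ and its $T$-derivatives are uniformly bounded (the first by the computation above, the second since $\partial_{T_y}\pi_x$ is bounded). Consequently the boundary terms $[Q\nabla V]_{X_t}$ and the two time-integrals in \eqref{fTstoapp} converge (the integrands being exponentially small and absolutely integrable). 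For the martingale $M_V$: its quadratic variation $\langle M_V\rangle_t = \int_0^t g_V(X_s,\mathbf{T}(s))\,ds$ with $g_V$ controlled by $\sum_p W_p e^{N_p}\cdot(\text{bounded})\cdot(1/\sum_x W_x e^{N_x})^2 \lesssim 1/\sum_x W_x e^{N_x(\mathbf{T}(s))}$, which is exponentially small, so $\langle M_V\rangle_\infty<\infty$ a.s. and $M_V(t)$ converges a.s. by the martingale convergence theorem.

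Putting these together: rearranging \eqref{fTstoapp} for $f=V$, the left side is $V(t)-V(0)-\int_0^t\|\pi(\mathbf{T}(u))-z^*\|^2\,du$, and the right side converges a.s. as $t\to\infty$. Since $V(t)$ is bounded (above by $\log d$, and below — I should check — because $V(t)$ is nondecreasing up to the a.s.-convergent error, hence eventually bounded below too), it follows that $\int_0^\infty \|\pi(\mathbf{T}(u))-z^*\|^2\,du<\infty$ a.s. Combined with an equicontinuity argument — $\pi(\mathbf{T}(t))$ has bounded time-derivative, since $\tfrac{d}{dt}\pi_i(\mathbf{T}(t)) = \sum_x \partial_{T_x}\pi_i\cdot\mathbf 1_{X_t=x}$ is bounded — this forces $\|\pi(\mathbf{T}(t))-z^*\|\to 0$, which is the claim.

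The main obstacle I expect is the bookkeeping around the convergence/boundedness of $V(t)$ and the error terms: one must verify that $\sum_x W_x e^{N_x(\mathbf{T}(t))}$ really does grow exponentially (equivalently that $\min_x N_x(\mathbf{T}(t))\to\infty$, which needs that $X$ does not get stuck forever on a set whose complement in $K_d$ is never visited — but on $K_d$ every vertex is adjacent to every other, so $N_x(\mathbf{T}(t)) = t - T_x(t) + (\text{leaf terms})$, and since $T_x(t)\leq t$ cannot all be close to $t$, in fact $N_x(\mathbf{T}(t))\to\infty$ for every $x\in S$ provided $X$ spends unbounded time in $S$, which holds because the leaves are transient-like — this should be argued cleanly, perhaps by first noting $\sum_{i\in S}N_i(\mathbf{T}(t))\geq (d-1)\sum_{i\in S}T_i(t)$ and $\sum_{i\in S}T_i(t)\to\infty$). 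Once the exponential decay of the $Q$-bounds is in hand, everything else is the routine stochastic-approximation wrap-up.
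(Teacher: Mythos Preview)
Your approach is correct and essentially identical to the paper's: apply Proposition~\ref{propfTstoapp} to $f=V$, compute $\pi\cdot\nabla V=\sum_x(\pi_x-\tfrac1d)^2$ (the paper writes this as $1-\tfrac1d-H$, which is the same quantity on $K_d$), bound all error terms via Lemma~\ref{QKd} together with exponential growth of the normalizer, and conclude via Lipschitz continuity of $t\mapsto\pi(\mathbf{T}(t))$. Your flagged obstacle dissolves in one line once you remember that Theorem~\ref{dcomuni} concerns $G=K_d$ with \emph{no} leaves: since $\sum_x T_x(t)=t$, some $T_x(t)\le t/d$, hence $N_x(\mathbf{T}(t))=t-T_x(t)\ge(d-1)t/d$ and $\sum_x W_xe^{N_x(\mathbf{T}(t))}\ge(\min_yW_y)\,e^{(d-1)t/d}$---this is exactly the bound the paper uses.
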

\begin{proof}
  Observe that $\pi(\mathbf{T}(u))\cdot \nabla V(\mathbf{T}(u))=1-\frac{1}{d}-H(u)$. By Proposition \ref{propfTstoapp}, we have
\begin{equation}
  \label{Vstoapp}
  \begin{aligned}
  V(t)-V(0)-\int_0^t (1-\frac{1}{d}-H(u)) d u&=Q \nabla V(\mathbf{T}(t))_{X_t}-Q \nabla V(\mathbf{T}(0))_{X_0}+M_V(t)\\
  &-\int_0^t \frac{\partial}{\partial T_{X_u}} Q \nabla V(\mathbf{T}(u))_{X_u} d u
\end{aligned}
\end{equation}
where we wrote $Q\nabla V (\mathbf{T}):=Q(\mathbf{T}) \nabla V (\mathbf{T})$ for simplicity and $M_V(t)$ is a martingale with quadratic variation 
\begin{equation}
  \label{qudraMV}
  \left\langle M_V\right\rangle_t=\int_0^t \sum_{p: p\sim X_s} W_{p}e^{N_p(\mathbf{T}(s))}([Q(\mathbf{T}(s))\nabla V(\mathbf{T}(s))]_{p}-[Q(\mathbf{T}(s))\nabla V(\mathbf{T}(s))]_{X_s})^2 d s
\end{equation}
Since $\nabla V(\mathbf{T})$ is a bounded vector function with bounded partial derivatives, by (\ref{QestKd}), for some constant $C$, we have 
$$
\max\left\{\left|Q \nabla V(\mathbf{T}(s))_{X_s}\right|, \left|\frac{\partial}{\partial T_{X_s}} Q \nabla V(\mathbf{T}(s))_{X_s}\right|\right\} \leq \frac{C}{\sum_{x \in V} W_x e^{N_x(\mathbf{T}(s))}}
$$
Using this inequality, (\ref{qudraMV}) and the fact that $\sum_{x\in V}W_xe^{N_x(\mathbf{T}(t))}\geq \min_{x\in V}(W_x)e^{t/d}$, we see that the right hand side of (\ref{Vstoapp}) converges a.s.

By the property of $H$ (see e.g. Proposition \ref{Hjconst}), for any small neighborhood $B(z,\delta)$ $(\delta>0)$ of $z:=(\frac{1}{d},\frac{1}{d},\cdots,\frac{1}{d})\in \RR^d$, there exists $\varepsilon>0$ such that $1-\frac{1}{d}-H(\pi)>\varepsilon$ if $\pi$ is outside $B(z,\delta)$. Since $\pi(\mathbf{T}(t))$ is 1-Lipschitz, the time $\pi(\mathbf{T}(t))$ stays outside of $B(z,2\delta)$ must be finite and it leaves $B(z,2\delta)$ only finitely many times, because otherwise 
$$
\lim_{t\to \infty}V(t)= \lim_{t\to \infty}\int_0^{t} (1-\frac{1}{d}-H(u)) d u = \infty
$$
which contradicts the fact $V$ is non-positive. This shows that $\pi(\mathbf{T}(t))$ converges to $z$ a.s.. 
\end{proof}

\begin{corollary}[Section 5.6, \cite{pemantle1988thesis}]
  \label{VRRWKdas}
  Let $(X_n)_{n\in \NN}$ be a VRRW on $G=K_d$ $(d\geq 3)$. Almost surely, for any $x\in V$, $\lim_{n\to \infty}Z_x(n)/n=1/d$.
\end{corollary}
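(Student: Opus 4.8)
The plan is to transport Theorem~\ref{dcomuni} to discrete time through the embedding of the VRRW into continuous time. By Proposition~\ref{vrrwmixcvrrw} together with Corollary~\ref{wevexpsum}, we may realize $(X_n)_{n\in\NN}$ as the jump chain $(\widetilde X_{\eta_n})_{n\ge 0}$ of a cVRRW $\widetilde X$ on $K_d$ constructed from the timelines of Definition~\ref{timelinesdef} with independent weights $W_i\sim\operatorname{Gamma}(a_i,1)$ and alarm times determined by $W_i\bigl(e^{\Theta^{(i)}_n}-1\bigr)=\xi^{(i)}_1+\cdots+\xi^{(i)}_n$, the $\xi^{(i)}_l$ being jointly independent $\mathrm{Exp}(1)$ variables; here $\eta_n$ denotes the $n$-th jump time of $\widetilde X$. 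Applying the strong law of large numbers to $\xi^{(i)}_1+\cdots+\xi^{(i)}_n$ gives, almost surely, $\Theta^{(i)}_n=\log n-\log W_i+o(1)$.

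Next I would set up the continuous/discrete dictionary. Write $\mathbf T(t)$ for the local time vector of $\widetilde X$ and $N(t)$ for the number of jumps of $\widetilde X$ in $[0,t]$. On $K_d$ one has $\sum_k T_k(t)=t$ and the clock of vertex $i$ is $\widetilde T_i(t)=\sum_{k\sim i}T_k(t)=N_i(\mathbf T(t))=t-T_i(t)$, so the number $J_i(t):=\#\{n\ge 1:\Theta^{(i)}_n\le N_i(\mathbf T(t))\}$ of alarms of $i$ that have rung by time $t$ satisfies $Z_i(N(t))=J_i(t)+O(1)$ and $\sum_k J_k(t)=N(t)+O(1)$. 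Now feed in Theorem~\ref{dcomuni}: since $\pi_i(\mathbf T(t))=W_ie^{-T_i(t)}/\sum_x W_xe^{-T_x(t)}\to 1/d$ for every $i$, the differences $T_i(t)-T_j(t)$ remain bounded, hence $T_i(t)=t/d+O(1)$ and $N_i(\mathbf T(t))\to\infty$; in particular $J_i(t)\to\infty$ and $N(t)\to\infty$. Combining $\Theta^{(i)}_{J_i(t)}\le N_i(\mathbf T(t))<\Theta^{(i)}_{J_i(t)+1}$ with the asymptotics of $\Theta^{(i)}_n$ yields $N_i(\mathbf T(t))=\log J_i(t)-\log W_i+o(1)$, that is, $J_i(t)=W_ie^{N_i(\mathbf T(t))}(1+o(1))$.

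Finally I would divide: $Z_i(N(t))/N(t)=\bigl(J_i(t)+O(1)\bigr)/\bigl(\sum_k J_k(t)+O(1)\bigr)$, and after dividing numerator and denominator by $\sum_x W_xe^{N_x(\mathbf T(t))}\to\infty$ this becomes $\pi_i(\mathbf T(t))(1+o(1))$ over $\sum_k\pi_k(\mathbf T(t))(1+o(1))=1+o(1)$, which tends to $1/d$. Since $t\mapsto N(t)$ is nondecreasing, diverges, and increases by unit jumps, it attains every sufficiently large integer, so $Z_x(n)/n\to 1/d$ along all of $\NN$, which is the assertion.

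The routine ingredients are the law of large numbers for $\Theta^{(i)}_n$ and the bookkeeping above; the only step requiring a little care is that the random $o(1)$ inside the exponent of $\Theta^{(i)}_n$ not corrupt the estimate $J_i(t)=W_ie^{N_i(\mathbf T(t))}(1+o(1))$, which is fine because Theorem~\ref{dcomuni} keeps $\pi_i(\mathbf T(t))$ bounded away from $0$, so the $N_i(\mathbf T(t))$ all diverge at comparable rates and the ratios of the $J_i(t)$ stabilise. One could equally bypass Proposition~\ref{vrrwmixcvrrw} and argue directly from $\Theta^{(i)}_k=\sum_{l=0}^{k-1}\xi^{(i)}_l/(a_i+l)$ in~\eqref{contvrrwvkexp}, for which $\Theta^{(i)}_k-\log k$ converges a.s.\ by an $L^2$-martingale argument; nothing else changes.
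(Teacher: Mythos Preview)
Your proof is correct and follows essentially the same route as the paper: embed the VRRW as the jump chain of a cVRRW via Proposition~\ref{vrrwmixcvrrw}/Corollary~\ref{wevexpsum}, use the law of large numbers to obtain $J_i(t)\sim W_i e^{N_i(\mathbf T(t))}$ (which is exactly the paper's display~\eqref{llnYW} with $Y_x$ in place of your $J_x$), and then invoke Theorem~\ref{dcomuni}. The paper compresses the bookkeeping into two lines by appealing directly to the Poisson-process representation of Corollary~\ref{wevexpsum}, whereas you spell out the sandwich $\Theta^{(i)}_{J_i(t)}\le N_i(\mathbf T(t))<\Theta^{(i)}_{J_i(t)+1}$; the content is the same.
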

\begin{proof}
By Proposition \ref{vrrwmixcvrrw}, it suffices to prove the result for the discrete-time process associated with a cVRRW. Let $X=(X_t)_{t\geq 0}$ be a cVRRW on $G$ with weights $(W_x)_{x\in V}$ and we denote by $Y_{x}(t)$ the number of visits of $X$ to $x\in V$ up to time $t$. By Corollary \ref{wevexpsum} and the law of large numbers,
\begin{equation}
  \label{llnYW}
  \lim_{t\to \infty}\frac{Y_{x}\left(t\right)}{W_{x}\left(e^{N_{x}(\mathbf{T}(t))}-1\right)} = 1 ,\quad \lim_{t\to \infty}\frac{\sum_{x\in V}Y_{x}\left(t\right)}{\sum_{x\in V} W_{x}\left(e^{N_{x}(\mathbf{T}(t))}-1\right)} = 1 \quad a.s.
\end{equation}
Now apply Theorem \ref{dcomuni} to conclude.
\end{proof}

\begin{proposition}
  Let $G$ be a complete $d$-partite graph with $d\geq 3$ and $X=(X_t)_{t\geq 0}$ be a cVRRW on $G$ with initial weights $(W_x)_{x\in V}$. Then, almost surely, for any $i\in V_p \subset G$, $1\leq p \leq d$, 
  \begin{equation}
      \label{piTdparti}
      \lim_{t\to \infty}\pi_i(\mathbf{T}(t))=\frac{W_i}{\sum_{j\in V_p}W_j}\frac{1}{d}  
  \end{equation}
Moreover, let $(X_n)_{n\in \NN}$ be a VRRW on $G$ with initial local times $(a_x)_{x\in V}$. Then, almost surely, for any $p\in \{1,2,\cdots,d\}$, $\lim_{n\to \infty} (dZ_i(n)/n)_{i\in V_p}$
  exists with distribution $\operatorname{Dirichlet}(a_i,i\in V_p)$
\end{proposition}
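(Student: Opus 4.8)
The plan is to mimic the argument for $K_d$ (Theorem \ref{dcomuni}) with a suitably modified Lyapunov function. The key observation is that for a complete $d$-partite graph the Markov chain $A=(A_t)_{t\geq 0}$ with generator $L(\mathbf{T})$ still has a simple structure: if $x\in V_p$ and $y\in V_q$ with $p\neq q$, then $x\sim y$, so we retain an explicit formula $E_x^{\mathbf{T}}\tau_y$ and hence a tractable expression for $Q(\mathbf{T})_{i,j}$, together with the uniform-type bound $\sup_{x,i}\big(|Q(\mathbf{T})_{x,i}|,\,|\partial_{T_x}Q(\mathbf{T})_{x,i}|\big)\leq C\big/\sum_{x}W_xe^{N_x(\mathbf{T})}$ — just as in Lemma \ref{QKd}. (If $x,y$ lie in the same part $V_p$ one routes through a third part; the hitting time is still bounded above by a constant multiple of $1/(W_ye^{N_y(\mathbf{T})})$ plus lower-order terms, which is all we need for the estimate.) Since $\sum_x W_xe^{N_x(\mathbf{T}(t))}$ still grows at least like $e^{t/d}$, every boundary term and the quadratic variation of the martingale in Proposition \ref{propfTstoapp} will again be summable.

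First I would introduce $V(\mathbf{T}):=\sum_{p=1}^{d}\frac{1}{d}\sum_{i\in V_p}\frac{W_i}{\sum_{j\in V_p}W_j}\log\pi_i(\mathbf{T})$, or more simply the relative-entropy-type functional $V(\mathbf{T})=\sum_{i\in V}\beta_i\log\pi_i(\mathbf{T})$ with weights $\beta_i$ chosen so that $-\log d-V$ is minimized exactly at the target point $z^*$ of \eqref{piTdparti}, i.e. $\beta_i = \frac{1}{d}\cdot\frac{W_i}{\sum_{j\in V_p}W_j}$ for $i\in V_p$. Then I would compute $\pi(\mathbf{T})\cdot\nabla V(\mathbf{T})$ and show it is strictly negative away from $z^*$ and vanishes at $z^*$ — this is the analogue of ``$1-\tfrac1d-H(u)$'' and is the structural heart of the argument. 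Applying Proposition \ref{propfTstoapp} to this $V$ gives a stochastic-approximation identity; the estimates above show the right-hand side converges a.s., while $V(t)$ is bounded above, so $\int_0^\infty \big(-\pi(\mathbf{T}(u))\cdot\nabla V(\mathbf{T}(u))\big)\,du<\infty$. Combined with the $1$-Lipschitz continuity of $t\mapsto\pi(\mathbf{T}(t))$ and a compactness argument identical to the one in the proof of Theorem \ref{dcomuni}, this forces $\pi(\mathbf{T}(t))\to z^*$, which is \eqref{piTdparti}.

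For the VRRW statement I would pass through Proposition \ref{vrrwmixcvrrw}: a VRRW with initial local times $(a_x)$ is the discrete skeleton of a cVRRW with independent $W_x\sim\mathrm{Gamma}(a_x,1)$. Conditionally on the weights, \eqref{piTdparti} together with the law-of-large-numbers comparison \eqref{llnYW} (valid on any finite graph via Corollary \ref{wevexpsum}) gives $dZ_i(n)/n\to d\,\pi_i^{\infty}=\frac{W_i}{\sum_{j\in V_p}W_j}$ a.s. for $i\in V_p$. But $\big(W_i/\sum_{j\in V_p}W_j\big)_{i\in V_p}$, built from independent $\mathrm{Gamma}(a_i,1)$ variables, is by the standard gamma–Dirichlet representation distributed as $\mathrm{Dirichlet}(a_i,i\in V_p)$, and since the limit is a measurable function of $(W_x)_{x\in V_p}$ only, its unconditional law is the same Dirichlet distribution; the independence across distinct parts is inherited from the independence of the underlying gamma families.

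The main obstacle I anticipate is verifying that $V$ (with these $W$-weighted coefficients) really is a strict Lyapunov function, i.e. that $\pi\cdot\nabla V(\mathbf{T})<0$ for $\pi(\mathbf{T})\neq z^*$. Unlike the symmetric $K_d$ case, here the interaction term $N_i(\mathbf{T})=\sum_{q\neq p}\sum_{k\in V_q}T_k$ couples whole parts rather than single vertices, so one must check that the convex-analytic identity underlying the Losert–Akin functional still yields the needed sign; I expect this to reduce to a weighted Jensen/relative-entropy inequality on each part $V_p$, together with the between-parts structure forcing $\sum_{i\in V_p}\pi_i\to 1/d$, but making this precise — and handling the marginal case where two vertices in the same part must be connected through a third part in the hitting-time bound — is where the real work lies.
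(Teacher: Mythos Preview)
Your approach is workable but misses the structural shortcut that makes this proposition almost trivial given Theorem~\ref{dcomuni}. The paper's proof rests on two observations you overlook. First, for $i,j\in V_p$ all neighbours coincide, so $N_i(\mathbf{T})=N_j(\mathbf{T})$ and hence
\[
\frac{\pi_i(\mathbf{T})}{\sum_{j\in V_p}\pi_j(\mathbf{T})}=\frac{W_i}{\sum_{j\in V_p}W_j}
\]
identically in $t$; the within-part proportions are \emph{deterministic constants}, not something to be extracted from a Lyapunov argument. Second, the part-aggregated process $\big(\sum_{j\in V_p}T_j(t)\big)_{1\le p\le d}$ is exactly the local time of a cVRRW on $K_d$ with weights $\widetilde W_p=\sum_{j\in V_p}W_j$, so Theorem~\ref{dcomuni} gives $\sum_{j\in V_p}\pi_j(\mathbf{T}(t))\to 1/d$ directly. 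Combining the two yields \eqref{piTdparti} in one line; there is no need to rebuild $Q(\mathbf{T})$ bounds on the $d$-partite graph or to worry about same-part hitting times at all.

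Your anticipated ``main obstacle'' in fact dissolves under the first observation: with your weights $\beta_i$, a short computation gives $V(\mathbf{T})=\text{const}+\tfrac{d-1}{d}\sum_k T_k-\log\sum_x W_xe^{N_x(\mathbf{T})}$, so $\pi\cdot\nabla V=\sum_p s_p^2-\tfrac1d$ with $s_p=\sum_{i\in V_p}\pi_i$. This is nonnegative and vanishes on the whole set $\{s_p=1/d\ \forall p\}$, not only at $z^*$; it is the constraint $\pi_i/s_p\equiv W_i/\sum_{j\in V_p}W_j$ that pins down $z^*$ within that set. So your Lyapunov route is really the $K_d$ argument in disguise, carried out on the larger state space. (The paper does use essentially this Lyapunov function later, in the proof of Theorem~\ref{comdpartthm}, but only because leaves break the clean projection; here it is unnecessary.)

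For the VRRW part your gamma--Dirichlet argument via Proposition~\ref{vrrwmixcvrrw} is correct and is a pleasant alternative to the paper's route, which instead notes that within each part the VRRW behaves as a P\'olya urn (again because all vertices of $V_p$ share the same neighbourhood), giving $\big(Z_i(n)/\sum_{j\in V_p}Z_j(n)\big)_{i\in V_p}\to\mathrm{Dirichlet}(a_i,i\in V_p)$ directly, and then combines this with the $K_d$ projection for $\sum_{j\in V_p}Z_j(n)/n\to 1/d$. The two arguments are equivalent, yours simply pushes the randomness into the weights.
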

\begin{proof}
 Observe that $(\sum_{j\in V_p}T_j(t),t\geq 0)_{1\leq p \leq d}$ is equal in law to the local time process of a cVRRW on $K_d$ with initial weights $(\sum_{j\in V_p}W_j, 1\leq p \leq d)$. Then (\ref{piTdparti}) follows from Theorem \ref{dcomuni}. By results for the P\'{o}lya urn model, for any $p\in \{1,2,\cdots,d\}$,
$$ \lim_{n\to \infty}(\frac{Z_y(n)}{\sum_{j\in V_p} Z_j(n)})_{y \in V_p}\ \text{exists with distribution} \ \operatorname{Dirichlet}(a_y,y\in V_p)$$
Corollary \ref{VRRWKdas} implies that $\sum_{j\in V_p} Z_j(n)/n$ converges to 1/d. Then apply Slutsky's theorem to conclude.
\end{proof}

\subsection{cVRRW on complete-like graphs}
\label{comlikesub} Now we assume that $(X_t)_{t\geq 0}$ is a cVRRW on a complete-like graph $G=S \cup \partial S$. Without loss of generality, we may assume that each $i \in S$ has at most one leaf. Indeed, if $n(j_1)=n(j_2)$, we may glue $j_1,j_2$ together and denote this new vertex by $j$. Then 
$$(X_t,(T_{x}(t),x\neq j_1,j_2),T_{j_1}(t)+T_{j_2}(t))_{t\geq 0} \stackrel{d}{=} (X^{(new)}_t,(T_{x}^{X^{(new)}}(t),x\neq j),T_{j}^{X^{(new)}}(t))_{t\geq 0}$$
where $(X^{(new)}_t)$ is a cVRRW on the new graph with weights $W_j=W_{j_1}+W_{j_2}$ and otherwise unchanged and $T^{X^{(new)}}$ is its local time process. If $i\in S$ has a leaf, we denote the leaf point by $\ell(i)$. 

By Markov property of $A=(A_t)_{t\geq 0}$, we can obtain some bounds on $Q(\mathbf{T})_{x,y}$. 

\begin{lemma}
  \label{mjpcomplike}
  Let $A$ be the Markov chain defined by (\ref{LTdef}). Assume that $W_je^{T_{n(j)}}\leq 2W_{n(j)}e^{N_{n(j)}(\mathbf{T})}$ for all $j\in \partial S$. Then, there exists a constant $C$ such that 
  \begin{enumerate}[topsep=0pt, partopsep=0pt, label=(\roman*)]
\item $\sup_{i \in S}\EE_{i}^{\mathbf{T}}\tau_x \leq \frac{C}{W_{x}e^{N_{x}(\mathbf{T})}}$ for any $x \in S$
\item for  any $y \in \partial S$,
  $$
  \sup_{i \in S}\EE_{i}^{\mathbf{T}}\tau_y \leq \frac{C}{W_{y}e^{N_{y}(\mathbf{T})}}\frac{\sum_{x \in V} W_x e^{N_x(\mathbf{T})}}{W_{n(y)}e^{N_{n(y)}(\mathbf{T})}}
  $$
\item for any $j,y \in \partial S$
  $$
 \EE_{j}^{\mathbf{T}}\tau_y \leq \frac{C}{W_{n(j)}e^{N_{n(j)}(\mathbf{T})}}+\frac{C}{W_{y}e^{N_{y}(\mathbf{T})}}\frac{\sum_{x \in V} W_x e^{N_x(\mathbf{T})}}{W_{n(y)}e^{N_{n(y)}(\mathbf{T})}}
  $$
  \item  $$
 \sup_{x\in S,y\in S}Q(\mathbf{T})_{x,y} \leq \frac{C}{\sum_{x \in V} W_x e^{N_x(\mathbf{T})}}, \quad \sup_{x\in S}Q(\mathbf{T})_{x,y} \leq \frac{C}{W_{n(y)}e^{N_{n(y)}(\mathbf{T})}}, \ \forall y\in \partial S
$$
Moreover,
$$
\sup_{x\in V,  y\in V} Q(\mathbf{T})_{x,y} \leq \frac{C}{\min_{i \in S} W_i e^{N_i(\mathbf{T})}}
$$
and 
\begin{equation}
    \label{Qxyjdifbd}
    \sup_{x\in S,  y\in S, j\in V} |Q(\mathbf{T})_{x,j}-Q(\mathbf{T})_{y,j}| \leq \frac{C}{\sum_{i \in V} W_i e^{N_i(\mathbf{T})}}; \  \sup_{j\in V} |Q(\mathbf{T})_{x,j}-Q(\mathbf{T})_{n_x,j}| \leq \frac{C}{ W_{n_x} e^{N_{n_x}(\mathbf{T})}}, \ \forall x\in \partial S
\end{equation}
  \end{enumerate}  
\end{lemma}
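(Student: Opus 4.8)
The plan is to estimate expected hitting times of the finite Markov chain $A$ directly, exploiting the special structure of the complete-like graph: once $A$ leaves the core $S=K_d$ it sits on a single leaf $y$ with its only neighbor being $n(y)$, and from inside $S$ the dynamics restricted to $S$ are (up to the excursions onto leaves, which cost little time by the assumption $W_je^{T_{n(j)}}\le 2W_{n(j)}e^{N_{n(j)}(\mathbf{T})}$) essentially those of the chain on $K_d$ analyzed in Lemma \ref{QKd}. Throughout I write $\mu_x:=W_xe^{N_x(\mathbf{T})}$ for the jump rates and $\Sigma:=\sum_{x\in V}\mu_x$; the standing hypothesis says $\mu_{\ell(i)}\le 2\mu_i$ for each $i\in S$ with a leaf, so the total rate seen from any $i\in S$ is comparable to $\Sigma$ up to a factor between $\tfrac12$ and $1$.

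First, for (i): fix $x\in S$ and $i\in S$. Compare $A$ to the chain on $K_d$ by a coupling/conditioning argument — from any state in $S$ the chain jumps to $x$ with probability $\gtrsim \mu_x/\Sigma$ per excursion that stays in $S\cup\{\text{leaves}\}$, and each such excursion (including a possible detour onto one leaf and back, whose expected duration is $\le 1/\mu_{\ell(i)}+ $ (return cost) $\lesssim 1/\mu_i$ by Proposition \ref{propeFtwi}) takes expected time $\lesssim 1/\Sigma$. Hence $\EE_i^{\mathbf T}\tau_x$ is a geometric-type sum giving $\lesssim 1/\mu_x$. For (ii), to hit a leaf $y$ from $i\in S$ one must first reach $n(y)$ (cost $\lesssim 1/\mu_{n(y)}$ by (i)) and then, on each visit to $n(y)$, jump to $y$ with probability $\mu_y/(\mu_y+\sum_{k\sim n(y)}\mu_k)\gtrsim \mu_y/\Sigma$; the expected number of returns to $n(y)$ before that success is $\lesssim \Sigma/\mu_y$, each return costing $\lesssim 1/\mu_{n(y)}$ (reach $n(y)$ again from $S$), which multiplies out to the claimed bound $\tfrac{C}{\mu_y}\cdot\tfrac{\Sigma}{\mu_{n(y)}}$. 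Part (iii) is immediate from (ii) and the strong Markov property at the first entrance to $S$: from $j\in\partial S$ the chain jumps to $n(j)$ after expected time $\le 1/\mu_{n(j)}$, then apply (ii).

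For (iv), I feed these hitting-time estimates into the formula $Q(\mathbf{T})_{i,j}=\pi_j\sum_{r\in V}\pi_r\big(\EE_i^{\mathbf T}\tau_j-\EE_r^{\mathbf T}\tau_j\big)$ from Proposition \ref{QTformulatau}. Note $\pi_j=\mu_j/\Sigma$, and $\pi_r$ is negligible for $r\in\partial S$ (it is $O(\mu_r/\Sigma)$ with $\mu_r\le 2\mu_{n(r)}$ so the sum over leaves is absorbed), so the sum over $r$ is effectively over $r\in S$ where the $\EE_r^{\mathbf T}\tau_j$ are controlled by (i)–(ii). For $x,y\in S$: $Q_{x,y}\le \pi_y\cdot\max_r\EE_x^{\mathbf T}\tau_y\lesssim (\mu_y/\Sigma)(1/\mu_y)=1/\Sigma$. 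For $y\in\partial S$, $x\in S$: $Q_{x,y}\le \pi_y\cdot\max_r\EE_x^{\mathbf T}\tau_y\lesssim (\mu_y/\Sigma)\cdot\tfrac{1}{\mu_y}\tfrac{\Sigma}{\mu_{n(y)}}=1/\mu_{n(y)}$. The crude bound $Q(\mathbf{T})_{x,y}\le C/\min_{i\in S}\mu_i$ over all $x,y\in V$ follows since $\mu_{n(y)}\ge\min_{i\in S}\mu_i$ and $\Sigma\ge\min_{i\in S}\mu_i$. For the difference estimates \eqref{Qxyjdifbd}, subtract the hitting-time formula at $x$ and at $y$ (both in $S$): $Q_{x,j}-Q_{y,j}=\pi_j\sum_r\pi_r\big(\EE_x^{\mathbf T}\tau_j-\EE_y^{\mathbf T}\tau_j\big)=\pi_j(\EE_x^{\mathbf T}\tau_j-\EE_y^{\mathbf T}\tau_j)$, and I bound $|\EE_x^{\mathbf T}\tau_j-\EE_y^{\mathbf T}\tau_j|$ by a coupling of the two chains started at $x$ and $y$ that merges after $O(1/\Sigma)$ expected time (both reach a common state in $S$ quickly), giving $\lesssim \pi_j\cdot(\text{merge cost})\lesssim (\mu_j/\Sigma)(1/\Sigma)$ when $j\in S$ and the stated $C/\mu_{n_x}$ bound when $x\in\partial S$, where the extra $1/\mu_{n_x}$ is the cost for the chain from $x$ to first reach $n(x)\in S$.

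I expect the main obstacle to be making the "excursions onto leaves cost little" reduction fully rigorous in parts (i)–(ii): one must verify that inserting leaf-detours into the $K_d$ analysis only inflates every expected hitting time by a bounded factor, which uses the hypothesis $\mu_{\ell(i)}\le 2\mu_i$ together with the second-moment bound $\EE(\xi-T)^2\le 2/\mu^2$ from Proposition \ref{propeFtwi} to control the geometric series of return times; and in \eqref{Qxyjdifbd} constructing the merging coupling so that its expected coupling time is genuinely $O(1/\Sigma)$ uniformly (the two copies must be driven to agree using a shared clock on the same target vertex in $S$) rather than merely finite.
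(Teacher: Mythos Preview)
Your plan for (i)--(iii) follows the same geometric-return/strong-Markov line as the paper, but two of your intermediate assertions are off. The reference to Proposition~\ref{propeFtwi} is misplaced: that concerns cVRRW alarm times, whereas $A$ is the fixed-rate chain with plain exponential holding times. More importantly, ``each excursion takes expected time $\lesssim 1/\Sigma$'' fails when one $\mu_i$ dominates, since the holding time at $i\in S$ is $\sim 1/(\sum_{k\sim i}\mu_k)$, which need not be comparable to $1/\Sigma$. The paper avoids this by decomposing $\EE_i^{\mathbf T}\tau_x$ directly as time spent in $S$ plus time spent on leaves: because every vertex of $S$ is adjacent to $x$, the clock at $x$ runs at rate $\mu_x$ whenever $A\in S$, so the expected time in $S$ before $\tau_x$ is exactly $1/\mu_x$; the expected time on each leaf $j$ before $\tau_x$ is at most $(\mu_j/\mu_x)\cdot(1/\mu_{n(j)})\le 2/\mu_x$ by the hypothesis. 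This gives (i) with no comparison to the pure $K_d$ chain and no per-excursion bound.

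The coupling argument for \eqref{Qxyjdifbd} has a genuine gap. You assert $|\EE_x^{\mathbf T}\tau_j-\EE_y^{\mathbf T}\tau_j|\lesssim (\text{expected merge time})=O(1/\Sigma)$, but this is false already when $y=j\in S$: then the difference equals $\EE_x^{\mathbf T}\tau_j\asymp 1/\mu_j$ by (i), which can be $\gg 1/\Sigma$. Expected coupling time does not in general bound differences of expected hitting times, because one chain may hit $j$ before the merge. The paper's route is simpler and needs no coupling: from \eqref{Qijexpress} one has $Q(\mathbf T)_{x,j}-Q(\mathbf T)_{y,j}=\pi_j\bigl(\EE_x^{\mathbf T}\tau_j-\EE_y^{\mathbf T}\tau_j\bigr)$. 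For $j\in S$, bound each hitting time by $C/\mu_j$ via (i) and let the prefactor $\pi_j=\mu_j/\Sigma$ cancel the $\mu_j$, giving $C/\Sigma$. For $j\in\partial S$, the strong Markov property at $\tau_{n_j}$ gives $\EE_x^{\mathbf T}\tau_j-\EE_y^{\mathbf T}\tau_j=\EE_x^{\mathbf T}\tau_{n_j}-\EE_y^{\mathbf T}\tau_{n_j}$ (the common remainder $\EE_{n_j}^{\mathbf T}\tau_j$ cancels), reducing to the previous case. The second inequality in \eqref{Qxyjdifbd} is likewise immediate from $\EE_x^{\mathbf T}\tau_j=1/\mu_{n_x}+\EE_{n_x}^{\mathbf T}\tau_j$ for $x\in\partial S$, $j\ne x$, together with (ii) for the case $j=x$.
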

\begin{proof}
(i) Before $A$ hit $x$, the expected number of visits to any $j\in \partial S$ is bounded by $W_je^{N_j(\mathbf{T})}/W_{x}e^{N_{x}(\mathbf{T})}$ whence the expected time spent at $j$ is bounded by 
$$\frac{W_je^{N_j(\mathbf{T})}}{W_{n(j)}e^{N_{n(j)}(\mathbf{T})}W_{x}e^{N_{x}(\mathbf{T})}} \leq \frac{2}{W_{x}e^{N_{x}(\mathbf{T})}}$$ 
By definition, the expected time spent on $S$ before $\tau_x$ is $(W_{x}e^{N_{x}(\mathbf{T})})^{-1}$. Thus, 
  $$
  \EE_{i}^{\mathbf{T}}\tau_{x} \leq \frac{1}{W_{x}e^{N_{x}(\mathbf{T})}}+\frac{2(d-1)}{W_{x}e^{N_{x}(\mathbf{T})}}
  $$
(ii) By the Markov property, $\EE_{i}\tau_{y}=\EE_{i}\tau_{n(y)}+\EE_{n(y)}\tau_y$. By (i) and similar arguments,
  $$
\EE_{n(y)}^{\mathbf{T}}\tau_y\leq \frac{1}{W_ye^{N_y(\mathbf{T})}}+\frac{\sum_{x\in S:x\sim n_y}W_{x}e^{N_{x}(\mathbf{T})}}{W_ye^{N_y(\mathbf{T})}}\frac{C}{W_{n(y)}e^{N_{n(y)}(\mathbf{T})}}
  $$
 which implies (ii).  \\
 (iii) For any $j \in \partial S$ and $j\neq y$, $\EE_{j}^{\mathbf{T}}\tau_y=\EE_{n(j)}^{\mathbf{T}}\tau_y+\frac{1}{W_{n(j)}e^{N_{n(j)}(\mathbf{T})}}$. (iii) then follows from (ii). \\
 (iv) follows from (i), (ii), (iii), and (\ref{QTij}). Note that for (\ref{Qxyjdifbd}), one can use (\ref{Qijexpress}) to write 
 $$
Q(\mathbf{T})_{x,j}-Q(\mathbf{T})_{y,j}=\pi_j (\EE_x^{\mathbf{T}}\tau_j-\EE_y^{\mathbf{T}}\tau_j)
 $$
 Then the first inequality in (\ref{Qxyjdifbd}) follows from (i) if $j \in S$. If $j\in \partial S$, $\EE_x^{\mathbf{T}}\tau_j-\EE_y^{\mathbf{T}}\tau_j=\EE_x^{\mathbf{T}}\tau_{n_j}-\EE_y^{\mathbf{T}}\tau_{n_j}$ by the Markov property. The second inequality in (\ref{Qxyjdifbd}) is proved similarly.
\end{proof}

Now we are ready to prove Theorem \ref{cvrrwcomlike}.

\begin{proof}[Proof of Theorem \ref{cvrrwcomlike}]
 As in Corollary \ref{VRRWKdas}, fix $j \in \partial S$, by the law of large numbers for Poisson process, almost surely,
 \begin{equation}
  \label{NnjTandTnj}
  \liminf_{t\to \infty}\frac{W_{n_j}e^{N_{n_j}(T(t))}}{W_je^{T_{n_j}(t)}} \geq 1, \quad \text{i.e.}\ \liminf_{t\to \infty}N_{n_j}(\mathbf{T}(t)) - T_{n_j} \geq \log \frac{W_j}{W_{n_j}}
 \end{equation}
 Denote the $n$-th visit time to $n_j$ by $\sigma_n$. By the time-lines construction(Proposition \ref{proptimeline1}), 
 \begin{equation}
  \label{Tjbdfinite}
   T_j(\sigma_{k+1})\leq \Theta_1^{(n_j)}+\sum_{n=1}^{k}\left(\Theta_{n+1}^{(n_j)}-\Theta_{n}^{(n_j)}\right) \mathds{1}_{B_n}
 \end{equation}
 where $B_n$ is the event that the next jump after $\sigma_n$ is $n_j \to j$. Similarly, 
$$
\sum_{p\in S:p\sim n_j}T_p(\sigma_{k+1}) \geq \sum_{n=1}^{k}\left(\Theta_{n+1}^{(n_j)}-\Theta_{n}^{(n_j)}\right) (1-\mathds{1}_{B_n})
$$
Note that 
$$\PP\left(B_n|\FF_{\sigma_n}\right)=\frac{W_je^{T_{n_j}(\sigma_n)}}{\sum_{x\in S: x\sim n_j}W_x e^{N_x(\mathbf{T}(\sigma_n))}}\leq \frac{W_j}{\sum_{x\in S: x\sim n_j}W_x}$$
Martingale arguments then imply that 
$$\limsup_{t\to\infty}\frac{T_j(t)}{\sum_{p\in S:p\sim n_j}T_p(t)} \leq \frac{W_j}{\sum_{x\in S: x\sim n_j}W_x}$$
  Combined with (\ref{NnjTandTnj}), this implies that for large $t$, we can always find some $p \in S$, with $p\sim n_j$ such that 
  \begin{equation}
  \label{Tsbdc1}
      \frac{T_p(t)}{T_{n_j}(t)} > \frac{\sum_{x\in S: x\sim n_j}W_x}{d \sum_{x: x\sim n_j}W_x}=:c_1
  \end{equation}
  Note that we used the assumption that $d\geq 3$ here. In particular,
 \begin{equation}
  \label{Wiest}
  \limsup_{t\to \infty} \frac{(W_je^{T_{n_j}(t)})^{c_1+1}}{\sum_{x\in S: x\sim n_j}W_x e^{N_x(\mathbf{T}(t))}} =0, \quad \text{and thus} \quad \int_0^{\infty}\pi_j(\mathbf{T}(t))dt<\infty\quad   a.s.
 \end{equation}

By Proposition \ref{propeFtwi},
 $$
  \sum_{n=1}^{\infty} \EE\left(\left(\Theta_{n+1}^{(i)}-\Theta_{n}^{(i)}\right) \mathds{1}_{B_n}|\FF_{\sigma_n}\right)\leq  \sum_{n=1}^{\infty} \frac{W_je^{T_{n_j}(\sigma_n)}}{\sum_{x\in S: x\sim n_j}W_xe^{N_x(\mathbf{T}(\sigma_n))}}\frac{1}{W_{n_j}e^{N_{n_j}(\mathbf{T}(\sigma_n))}}
 $$
 By (\ref{Wiest}) and the law of large numbers, for large $n$, $W_{n_j}e^{N_{n_j}(\mathbf{T}(\sigma_n))}\leq 2\sum_{x\in S: x\sim n_j}W_xe^{N_x(\mathbf{T}(\sigma_n))}$ and thus for some constant $C$, we have
 $$
 \frac{W_je^{T_{n_j}(\sigma_n)}}{\sum_{x\in S: x\sim n_j}W_xe^{N_x(\mathbf{T}(\sigma_n))}}\frac{1}{W_{n_j}e^{N_{n_j}(\mathbf{T}(\sigma_n))}} \leq \frac{C}{(W_{n_j}e^{N_{n_j}(\mathbf{T}(\sigma_n))})^{2-\frac{1}{c+1}}}\sim \frac{C}{n^{2-\frac{1}{c_1+1}}}
 $$
 which is summable. Therefore, almost surely,
 \begin{equation}
  \label{condexsummable}
  \sum_{n=1}^{\infty} \EE\left(\left(\Theta_{n+1}^{(i)}-\Theta_{n}^{(i)}\right) \mathds{1}_{B_n}|\FF_{\sigma_n}\right)<\infty
 \end{equation}
 Again, by Proposition \ref{propeFtwi} and Corollary \ref{wevexpsum}, letting $\xi_1,\xi_2,\cdots$ be i.i.d. Exp(1)-distributed random variables, we have,
 $$
\EE\left(\Theta_{n+1}^{(n_j)}-\Theta_{n}^{(n_j)}\right)^2 \leq \EE\frac{2}{(W_{n_j}e^{\Theta_{n}^{(n_j)}})^2}=\EE\frac{2}{(W_{n_j}+\sum_{i=1}^n\xi_i)^2}
 $$
 Since $\sum_{i=1}^{n} \xi_i \sim \operatorname{Gamma}(n,1)$, for $n\geq 3$, we have
$$
\begin{aligned}
  \EE\frac{2}{(W_{n_j}+\sum_{i=1}^n\xi_i)^2}=\int_0^{\infty}\frac{2}{(W_{n_j}+x)^2}\frac{x^{n-1} \mathrm{e}^{-x}}{(n-1) !}dx&\leq \frac{2}{(n-1)(n-2)}\int_0^{\infty}\frac{x^{n-3} \mathrm{e}^{-x}}{(n-3) !}dx\\
  &=\frac{2}{(n-1)(n-2)}
\end{aligned}
$$
which is summable. Thus, a.s. the martingale 
$$M_{n+1}:=\sum_{i=1}^{n}\left(\left(\Theta_{n+1}^{(n_j)}-\Theta_{n}^{(n_j)}\right) \mathds{1}_{B_n}-\EE(\left(\Theta_{n+1}^{(n_j)}-\Theta_{n}^{(n_j)}\right) \mathds{1}_{B_n}|\FF_{\sigma_n}) \right),\quad n\geq 1$$
converges. By (\ref{Tjbdfinite}) and (\ref{condexsummable}), we have a.s. 
\begin{equation}
  \label{Tjpartialfinite}
  T_j(\infty)<\infty, \quad \forall j\in \partial S
\end{equation}

Again, we define 
$$V(t):=V(\mathbf{T}(t)):=\frac{1}{d}\sum_{i\in S}\log \pi_i(\mathbf{T}(t))=-\log \sum_{x\in V}W_xe^{N_x(\mathbf{T}(t))}+\frac{1}{d}\sum_{i\in S}\left[N_i(\mathbf{T}(t))+\log W_i\right] $$
By Proposition \ref{propfTstoapp}, we have
\begin{equation}
  \label{VKdlikestoapp}
  \begin{aligned}
   V(t)-V(0)-\int_0^t (1-\frac{1}{d}-H(u)) &d u=Q \nabla V(\mathbf{T}(t))_{X_t}-Q \nabla V(\mathbf{T}(0))_{X_0}+M_V(t)\\
  &-\int_0^t \frac{\partial}{\partial T_{X_u}} Q \nabla V(\mathbf{T}(u))_{X_u} d u+\sum_{j\in \partial S}\int_0^{t}C_j\pi_j(\mathbf{T}(u))du
\end{aligned}
\end{equation}
where $C_j$ are constants depending on $G$ and the quadratic variation of $M_V$ is given by (\ref{qudraMV}). Note that $\nabla V$ is a bounded column-vector function with bounded partial derivatives.

By (\ref{NnjTandTnj}), (\ref{Tsbdc1}) and (\ref{Tjpartialfinite}), a.s. we can find some (possibly random) $u>0$ such that for all $t\geq u$,
\begin{equation}
    \label{goodu}
  \min_{x \in S} W_x e^{N_x(\mathbf{T})} \geq e^{c_2t}; \quad W_je^{T_i(t)}\leq 2W_{n_j}e^{N_{n_j}(T(t))}, \quad \forall j\in \partial S  
\end{equation}
where $c_2$ is a positive constant. By (\ref{qudraMV}), (\ref{Qxyjdifbd}) and (\ref{goodu}), we have
\begin{equation}
  \label{MVinftybd}
  \left\langle M_V\right\rangle_{\infty}\leq \left\langle M_V\right\rangle_{u}+\int_u^{\infty} \frac{C }{\min_{x \in S} W_x e^{N_x(\mathbf{T}(t))}}d t<\infty
\end{equation}

For all $k, l \in V$, let us compute $\frac{\partial}{\partial T_{k}} Q(\mathbf{T})_{k, l}$: by differentiation of the Poisson equation,
 \begin{equation}
   \label{deriQT}
    \begin{aligned}
 &\quad\ \frac{\partial}{\partial T_{k}} Q(\mathbf{T})_{k, l}=\left(Q(\mathbf{T})L(\mathbf{T})\frac{\partial}{\partial T_{k}} Q(\mathbf{T})+\Pi(\mathbf{T})\frac{\partial}{\partial T_{k}} Q(\mathbf{T})\right)_{k, l} \\  
 &=-\left(Q(\mathbf{T})\frac{\partial}{\partial T_{k}} \Pi(\mathbf{T})+ Q(\mathbf{T})\left(\frac{\partial}{\partial T_{k}} L\right) Q(\mathbf{T})\right)_{k, l} -  \left((\frac{\partial}{\partial T_{k}}\Pi(\mathbf{T}))  Q(\mathbf{T})\right)_{k, l} \\
 &=-\left( Q(\mathbf{T})\left(\frac{\partial}{\partial T_{k}} L\right) Q(\mathbf{T})\right)_{k, l}- (\widetilde{\Pi}_k(\mathbf{T})Q(\mathbf{T}))_{k, l}
 \end{aligned}
 \end{equation}
 where $\widetilde{\Pi}_k(\mathbf{T})(u,v)=\pi_v(\mathbf{T})$ if $v \sim k$ and equals 0 otherwise. Note that we used
 $$
 \frac{\partial}{\partial T_{k}}\Pi(\mathbf{T})=\widetilde{\Pi}_k(\mathbf{T})-\frac{Z_k(\mathbf{T})}{\sum_{i \in V} W_ie^{N_i(\mathbf{T})}}\Pi(\mathbf{T}), \quad \text{and} \quad Q(\mathbf{T})\widetilde{\Pi}_i(\mathbf{T})=0
 $$
 where $Z_k(\mathbf{T}):=\sum_{y: y \sim k} W_y e^{N_y(\mathbf{T})}$. On the other hand,
 $$
 (\frac{\partial}{\partial T_{k}} L(\mathbf{T}))_{r,q} = \begin{cases}-\sum_{p: p\sim r,p\sim k} W_pe^{N_p(\mathbf{T})} & \text { if } r= q \\ W_qe^{N_q(\mathbf{T})} & \text { if } q \sim k, r \sim q \\ 0 & \text { otherwise }\end{cases}
 $$
 Therefore, if $t\geq u$, by Lemma \ref{mjpcomplike} (iv) and (\ref{deriQT}),  considering the two cases $X_t\in S$ and $X_t\notin S$ respectively, one can show that, for some constant $C$,
 \begin{equation}
  \label{derQbetabd}
  \left|\frac{\partial}{\partial T_{X_t}} Q \nabla V(\mathbf{T}(t))_{X_t}\right| \leq \frac{C}{\min_{x \in S} W_x e^{N_x(\mathbf{T}(t))}}
 \end{equation}
Combining (\ref{Wiest}), Lemma \ref{mjpcomplike} (iv), (\ref{MVinftybd}) and (\ref{derQbetabd}), we know that (\ref{VKdlikestoapp}) converges a.s. as $t\to \infty$.

Arguments as in the proof of Theorem \ref{dcomuni} then imply that almost surely $\pi(\mathbf{T}(t))$ converges to $z^*$ defined in (\ref{zstardef}).

By (\ref{Tjpartialfinite}), a.s. $\sum_{i\in S}T_i(t)-t$ converges. For any $i,j\in S$, since $\pi_i(\mathbf{T}(t))/\pi_j(\mathbf{T}(t))\to 1$, $T_j(t)-T_i(t)$ converges a.s. Thus, for any $i\in S$, $T_i(t)-t/d$ converges a.s.
\end{proof}

In the proof of the a.s-convergence of (\ref{VKdlikestoapp}), we used only the fact that $\nabla V$ is a bounded column-vector function with bounded partial derivatives. Therefore, when $G$ is a complete-like graph, we improved Proposition \ref{propfTstoapp} in the following sense.

\begin{theorem}[Stochastic approximation]
 \label{fTasconv}
 In the setting of Proposition \ref{propfTstoapp}, if $G$ is a complete-like graph and $f$ has bounded partial derivatives and bounded second partial derivatives, then almost surely, $M_{f}(t)$ and $\int_{0}^{t} \frac{\partial}{\partial T_{X_{u}}} Q\nabla f(\mathbf{T}(u))_{X_{u}} d u$ converge, and
$$
\begin{aligned}
  &\quad \lim_{t\to \infty}\left(f(\mathbf{T}(t))-f(\mathbf{T}(0))-\int_0^t  \pi(\mathbf{T}(u))\cdot \nabla f(\mathbf{T}(u)) d u \right)\\
  &=-Q\nabla f(\mathbf{T}(0))_{X_{0}}+\lim_{t \to \infty}M_{f}(t)-\int_{0}^{\infty} \frac{\partial}{\partial T_{X_{u}}} Q\nabla f(\mathbf{T}(u))_{X_{u}} d u
\end{aligned}
$$

\end{theorem}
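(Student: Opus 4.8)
The plan is to repeat, for a general $f$ as in the statement, the argument already carried out in the proof of Theorem~\ref{cvrrwcomlike} in the special case $f=V$, where the only features of $V$ that were used were the boundedness of $\nabla V$ and of its first partial derivatives. Concretely, I would start from the identity \eqref{fTstoapp} of Proposition~\ref{propfTstoapp} applied to $f$, which, abbreviating $Q\nabla f(\mathbf{T}):=Q(\mathbf{T})\nabla f(\mathbf{T})$ and writing $\tfrac{\partial}{\partial T_{X_u}}Q\nabla f(\mathbf{T}(u))_{X_u}$ for the total partial derivative in the direction $T_{X_u}$ of the composite map $\mathbf{T}\mapsto[Q(\mathbf{T})\nabla f(\mathbf{T})]_{X_u}$ (i.e.\ the sum of the two integral correction terms in \eqref{fTstoapp}), reads
\begin{multline*}
f(\mathbf{T}(t))-f(\mathbf{T}(0))-\int_0^t\pi(\mathbf{T}(u))\cdot\nabla f(\mathbf{T}(u))\,du
=[Q\nabla f(\mathbf{T}(t))]_{X_t}-[Q\nabla f(\mathbf{T}(0))]_{X_0}\\
+M_f(t)-\int_0^t\tfrac{\partial}{\partial T_{X_u}}Q\nabla f(\mathbf{T}(u))_{X_u}\,du .
\end{multline*}
It then suffices to prove that, almost surely: (i) $[Q\nabla f(\mathbf{T}(t))]_{X_t}\to 0$ as $t\to\infty$; (ii) $\langle M_f\rangle_\infty<\infty$, whence $M_f(t)$ converges; and (iii) $u\mapsto\tfrac{\partial}{\partial T_{X_u}}Q\nabla f(\mathbf{T}(u))_{X_u}$ is absolutely integrable on $[0,\infty)$, whence $\int_0^t\tfrac{\partial}{\partial T_{X_u}}Q\nabla f(\mathbf{T}(u))_{X_u}\,du$ converges. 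The stated identity then follows by letting $t\to\infty$ in the display above.

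The geometric input comes from Theorem~\ref{cvrrwcomlike}: almost surely $\pi(\mathbf{T}(t))\to z^*$ and $T_j(\infty)<\infty$ for all $j\in\partial S$, and hence, exactly as in the passage leading to \eqref{goodu}, there are a.s.\ a random time $u<\infty$ and a constant $c_2>0$ such that, for all $t\ge u$,
\[
\min_{x\in S}W_x e^{N_x(\mathbf{T}(t))}\ \ge\ e^{c_2 t},\qquad W_j e^{T_{n(j)}(t)}\ \le\ 2\,W_{n(j)}e^{N_{n(j)}(\mathbf{T}(t))}\ \text{ for all } j\in\partial S,
\]
the second inequality being precisely the hypothesis of Lemma~\ref{mjpcomplike}. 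Thus for $t\ge u$ all the estimates of Lemma~\ref{mjpcomplike}(iv) are in force; in particular $|Q(\mathbf{T}(t))_{x,y}|\le C/\min_{i\in S}W_i e^{N_i(\mathbf{T}(t))}\le Ce^{-c_2 t}$ uniformly in $x,y\in V$, which together with $\|\nabla f\|_\infty<\infty$ gives (i) at once. All integrands appearing below, being continuous functions of $\mathbf{T}(s)$, are moreover bounded on the compact set $\{s\le u\}$ where $\mathbf{T}(s)\in[0,u]^V$, so only their behaviour on $[u,\infty)$ matters.

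For (ii) I would bound the quadratic-variation density $g_f$ of \eqref{defgxtgeneral}. For $s\ge u$ and $p\sim X_s$, write $[Q\nabla f(\mathbf{T}(s))]_p-[Q\nabla f(\mathbf{T}(s))]_{X_s}=\sum_{j\in V}(Q(\mathbf{T}(s))_{p,j}-Q(\mathbf{T}(s))_{X_s,j})\,\nabla f(\mathbf{T}(s))_j$ and split into the three cases: both $X_s,p\in S$; $X_s\in S$ with $p=\ell(X_s)$ a leaf; $X_s\in\partial S$ with $p=n(X_s)$. In each case $|Q_{p,j}-Q_{X_s,j}|$ is controlled by \eqref{Qxyjdifbd} (respectively by $C/\sum_i W_i e^{N_i}$, by $C/W_{X_s}e^{N_{X_s}}$, by $C/W_{n(X_s)}e^{N_{n(X_s)}}$), while $W_p e^{N_p(\mathbf{T}(s))}$ is controlled respectively by $\sum_i W_i e^{N_i}$, by $2W_{X_s}e^{N_{X_s}}$ (using $W_{\ell(X_s)}e^{T_{X_s}}\le 2W_{X_s}e^{N_{X_s}}$ from the display above, since $N_{\ell(X_s)}(\mathbf{T})=T_{X_s}$), and by $W_{n(X_s)}e^{N_{n(X_s)}}$; multiplying, $g_f(X_s,\mathbf{T}(s))\le C'/\min_{i\in S}W_i e^{N_i(\mathbf{T}(s))}\le C'e^{-c_2 s}$. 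Hence $\langle M_f\rangle_\infty<\infty$ a.s., and a locally square-integrable martingale with finite quadratic variation converges a.s. For (iii), differentiating the Poisson equation as in \eqref{deriQT} and inserting the estimates of Lemma~\ref{mjpcomplike}(iv), together with the boundedness of $\nabla f$ and of its first and second partial derivatives, yields $|\tfrac{\partial}{\partial T_{X_u}}Q\nabla f(\mathbf{T}(u))_{X_u}|\le C/\min_{i\in S}W_i e^{N_i(\mathbf{T}(u))}\le Ce^{-c_2 u}$ for $u$ large — this is exactly the computation of \eqref{derQbetabd} with $\nabla V$ replaced by $\nabla f$, split according to whether $X_u\in S$ or $X_u\notin S$ — so the integrand is absolutely integrable, which is (iii).

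Since (i)--(iii) are, line by line, the estimates already performed for $f=V$ in the proof of Theorem~\ref{cvrrwcomlike}, and those estimates used nothing about $V$ beyond what is now assumed about $f$, I do not expect a genuinely new obstacle. The only care needed is bookkeeping: keeping the interpretation of $\tfrac{\partial}{\partial T_{X_u}}Q\nabla f(\mathbf{T}(u))_{X_u}$ as a total derivative consistent with the two separate correction integrals of \eqref{fTstoapp}, checking that the three-case split in (ii) and the two-case split in (iii) exhaust all possibilities on a complete-like graph, and observing that although the time $u$ furnished by Theorem~\ref{cvrrwcomlike} is random and not a stopping time, this is harmless because it enters only through the a.s.\ validity of bounds of the form ``for all $t\ge u$''.
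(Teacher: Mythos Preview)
Your proposal is correct and follows exactly the paper's own approach: the paper does not give a separate proof of Theorem~\ref{fTasconv} but simply observes, in the paragraph preceding the statement, that ``in the proof of the a.s.-convergence of (\ref{VKdlikestoapp}), we used only the fact that $\nabla V$ is a bounded column-vector function with bounded partial derivatives,'' so the same estimates \eqref{goodu}, \eqref{MVinftybd}, \eqref{derQbetabd} go through with $V$ replaced by any $f$ satisfying the hypotheses. Your write-up makes this explicit, with the same case splits and the same appeals to Lemma~\ref{mjpcomplike}(iv) and \eqref{deriQT}; the only cosmetic difference is that you invoke the \emph{conclusion} of Theorem~\ref{cvrrwcomlike} to re-derive \eqref{goodu}, whereas the paper had \eqref{goodu} already in hand from the intermediate steps of that proof.
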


As in Corollary \ref{VRRWKdas}, we can deduce Corollary \ref{VRRWcomlikeas} from Theorem \ref{cvrrwcomlike}

\begin{proof}[Proof of Corollary \ref{VRRWcomlikeas}]
  The proof of (i) is similar to that of Corollary \ref{VRRWKdas}. (ii) First note that by Theorem \ref{cvrrwcomlike}, 
  \begin{equation}
    \label{Njtdconv}
      \frac{W_je^{N_j(\mathbf{T}(t))}}{W_je^{t/d}}=e^{T_{n_j}(t)-t/d} \quad \text{and}\quad \frac{\sum_{x\in V}W_xe^{N_x(\mathbf{T}(t))}}{\sum_{x\in V}W_xe^{(1-\frac{1}{d})t}}=\sum_{x\in V}W_xe^{N_x(\mathbf{T}(t))-(1-\frac{1}{d})t}
  \end{equation}
  converge a.s. to non-zero limits. Let $(Y_x(t), t\geq 0)_{x\in V}$ be as in the proof of Corollary \ref{VRRWKdas}. Then by the law of large numbers, almost surely, for any $j\in \partial S$,
  $$
\lim_{t\to \infty}\frac{Y_{j}(t)}{(\sum_{x\in V}Y_x(t))^{\frac{1}{d-1}}}= \lim_{t\to \infty}\frac{W_je^{N_j(\mathbf{T}(t))}}{(\sum_{x\in V}W_xe^{N_x(\mathbf{T}(t))})^{\frac{1}{d-1}}}  \quad \text{ exists and is non-zero}
$$
It remains to apply Proposition \ref{vrrwmixcvrrw}.
\end{proof}

\begin{proof}[Proof of Theorem \ref{comdpartthm}]
  The proof of (i) is similar to that of (i) in Theorem \ref{cvrrwcomlike}. In particular, for any $i \in V_p \subset S$,
  $$
  \lim_{t\to \infty} \frac{\pi_i(\mathbf{T}(t))}{\sum_{y\in V_p}\pi_y(\mathbf{T}(t))}
  $$
  exists and is non-zero. For (ii), define 
  $$V(t):=-\log \sum_{x\in V}W_xe^{N_x(\mathbf{T}(t))} +\frac{1}{d}\sum_{q=1}^d \left(\sum_{x \in S, x \notin V_q}T_x(t)+\sum_{y\in V_q}\log W_y\right) $$
  Then $V(t)$ is upper bounded by $
\sum_{q=1}^d \frac{1}{d} \log (\sum_{y\in V_q} \pi_y(\mathbf{T}(t)))\leq 0$.
  Then (\ref{VKdlikestoapp}) still holds with possibly different constants $(C_j)_{j\in \partial S}$. Similar arguments as in the proof of Theorem \ref{cvrrwcomlike} imply that $\sum_{y\in V_p}\pi_y(\mathbf{T}(t))$ converges to $1/d$ a.s. for any $p\in \{1,2,\cdots,d\}$. 
  
  (I) follows from (ii) and Proposition \ref{vrrwmixcvrrw} as in the proof of Corollary \ref{VRRWKdas}. For (II), by Proposition \ref{propfTstoapp} with $f(\mathbf{T})=T_{n_j}$, as in Theorem \ref{fTasconv}, we have
  $$
  T_{n_j}(t)-\int_0^t\pi_{n_j}(\mathbf{T}(t))dt  \quad \text{converges a.s.}
  $$
  Therefore, as in the proof of Corollary \ref{VRRWKdas}, let $Y_{x}(t)$ the number of visits of $(X_t)_{t\geq 0}$ to $x\in V$ in the time interval $(0,t]$, then
  $$
  \lim_{t \to \infty} \frac{\log W_{j}e^{T_{n_j}(t)}}{\log \sum_{x\in V}W_xe^{N_x(\mathbf{T}(t))}} =   \frac{d }{d-1} z_{n_j} =  \frac{d }{d-1} \lim_{t\to \infty} \frac{Y_{n_j}(t)}{\sum_{x\in V} Y_x(t)}
  $$
  It remains to apply Proposition \ref{vrrwmixcvrrw}.
\end{proof}

\subsection{On the rate of convergence}
\label{rateconsub}

\begin{proof}[Proof of Theorem \ref{rateupper}]
  We first assume that $d>3$. Let $i,j$ be two distinct vertices in $S$. Recall that $\ell(i)$ and $\ell(j)$ are their leaves (if any). Applying Proposition \ref{propfTstoapp} with $f(\mathbf{T})=W_{i}e^{T_j+T_{\ell(i)}}-W_je^{T_i+T_{\ell(j)}}$, we have
\begin{equation}
  \label{piipijdiff}
    \begin{aligned}
      f(\mathbf{T}(t))=&= W_i-W_j+[Q(\mathbf{T}(t))\nabla f(\mathbf{T}(t))]_{X_{t}}-[Q(\mathbf{T}(0))\nabla f(\mathbf{T}(0))]_{X_{0}} \\
    &+\int_0^t \frac{\left( W_{\ell(i)}W_ie^{T_i(u)+T_j(u)+T_{\ell(i)}(u)}-W_{\ell(j)}W_je^{T_i(u)+T_j(u)+T_{\ell(j)}(u)}\right)}{\sum_{x\in V} W_xe^{N_x(\mathbf{T})}}   du  \\
      &-\int_{0}^{t} \frac{\partial}{\partial T_{X_{u}}} Q\nabla f(\mathbf{T}(u))_{X_{u}} d u+M_{f}(t)
      \end{aligned}
\end{equation}
By Lemma \ref{mjpcomplike}, (\ref{deriQT}) (or (\ref{derQbetabd})) and Theorem \ref{cvrrwcomlike}, if $d>3$, the right hand side of (\ref{piipijdiff}) converges a.s., where we used 
$$
 \limsup_{t\to \infty}e^{\frac{t}{d}} g_f(X_t,\mathbf{T}(t))<\infty
$$
to show that $M_f(t)$ converges a.s. Therefore, by Theorem \ref{cvrrwcomlike}, almost surely,
\begin{equation}
  \label{piipijetd}
  \lim_{t\to \infty} e^{\frac{t}{d}} \left( \pi_i(\mathbf{T}(t))-\pi_j(\mathbf{T}(t))\right)= \lim_{t\to \infty}\frac{e^{\frac{t}{d}}e^{\sum_{y\in S: y\sim i, y\sim j}T_y(t)}}{\sum_{x\in V} W_xe^{N_x(\mathbf{T}(t))}}\left( W_1e^{T_3(t)}-W_3e^{T_1(t)}\right) \quad \text{exists}
\end{equation}
Note that $e^{\frac{t}{d}}\pi_{\ell}(\mathbf{T}(t))$ converges to 0 a.s. for any leaf $\ell \in \partial S$ and $\sum_{x\in V}\pi_x(\mathbf{T}(t))=1$. Then, by (\ref{piipijetd}),
$$
\lim_{t\to \infty} e^{\frac{t}{d}}( \pi_i(\mathbf{T}(t))-\frac{1}{d} ) \quad \text{exists a.s. for any}\ i\in S
$$
which proves the existence of the first limit in (\ref{convratedlarge3}). By (\ref{piipijdiff}), almost surely,
$$
\lim_{t\to\infty}\left(\frac{e^{T_i(t)-T_{\ell(i)}(t)}}{W_i}-\frac{e^{\frac{t-2\sum_{x\in \partial S}T_x(t)}{d}}}{(\prod_{j\in S} W_j)^{\frac{1}{d}}} \right)=\lim_{t\to\infty}\left(\frac{e^{T_i(t)-T_{\ell(i)}(t)}}{W_i}-\sqrt[d]{\prod_{j\in S}\frac{e^{T_j(t)-T_{\ell(j)}(t)}}{W_j}}\right) \quad \text{exists}
$$
which completes the proof of (\ref{limTiminustd}).

If $(U(t))_{t\geq 0}$ be a Poisson process with unit rate, then it is known that $(M_U(t):=U(t)- t)_{t\geq 0}$ is a martingale with $\langle M_U\rangle_t=t$. Since $M_U$ has bounded jumps, by the law of the iterated logarithm for right-continuous local martingales, see e.g. Theorem 3 in \cite{MR0520004}, a.s.
\begin{equation}
 \label{lilMu}
\limsup_{t \to \infty}\frac{M_{U}(t)}{\sqrt{2t\log \log t}} \leq 1
\end{equation}
We denote by $Y_{x}(t)$ the number of visits of $(X_t)_{t\geq 0}$ to $x\in V$ up to time $t$. By (\ref{piipijetd}), (\ref{lilMu}), Corollary \ref{wevexpsum} and Theorem \ref{cvrrwcomlike}, for any $i\neq j\in S$, almost surely,
$$
\lim_{t\to \infty}(Y_i(t))^{\frac{1}{d-1}} \log \frac{Y_j(t)}{Y_i(t)}= \lim_{t\to \infty}(W_ie^{N_i(\mathbf{T}(t))})^{\frac{1}{d-1}} \left(\log \frac{Y_j(t)}{W_je^{N_j(\mathbf{T}(t))}} + \log  \frac{W_ie^{N_i(\mathbf{T}(t))}}{Y_i(t)}+\log \frac{W_je^{N_j(\mathbf{T}(t))}}{W_ie^{N_i(\mathbf{T}(t))}}  \right) 
$$
exists, where we used $\log (1+x) \sim x$ as $x\to 0$. Let $\eta_{n}$ be the $n$-th jump time of $X$. Then, 
\begin{equation}
  \label{Yeta1d-1log}
  \lim_{n \to \infty}(Y_i(\eta_n))^{\frac{1}{d-1}} \log \frac{Y_j(\eta_n)}{Y_i(\eta_n)}\quad \text{exists a.s.}
\end{equation}
By Proposition \ref{vrrwmixcvrrw}, for the VRRW $(X_n)_{n\in \NN}$, 
$$
\lim_{n \to \infty}(Z_i(n))^{\frac{1}{d-1}} \log \frac{Z_j(n)}{Z_i(n)}\quad \text{exists a.s.}
$$
By Corollary \ref{VRRWcomlikeas}, this implies the existence of the second limit in (\ref{convratedlarge3}).

The case $d=3$ is proved similarly. Simply observe that by (\ref{piipijdiff}), for any $\varepsilon>0$ and $i\neq j \in S$, almost surely, $e^{-\varepsilon t}\left( W_{i}e^{T_j+T_{\ell(i)}}-W_je^{T_i+T_{\ell(j)}}\right)$ converges to 0.
\end{proof}

\begin{proof}[Proof of Proposition \ref{K3lb}]
  We label the three vertices by $1,2,3$ and let $f(\mathbf{T}):=W_1e^{T_3}-W_3e^{T_1}$. By (\ref{piipijdiff}),
$$
    \begin{aligned}
    &\quad W_1e^{T_3(t)}-W_3e^{T_1(t)}=W_1-W_3+[Q(\mathbf{T}(t))\nabla f(\mathbf{T}(t))]_{X_{t}}-[Q(\mathbf{T}(0))\nabla f(\mathbf{T}(0))]_{X_{0}} \\
      &-\int_{0}^{t} \frac{\partial}{\partial T_{X_{u}}} Q\nabla f(\mathbf{T}(u))_{X_{u}} d u+M_{f}(t)
      \end{aligned}
$$
By Lemma \ref{QKd}, 
\begin{equation}
  \label{QnablafK3}
  [Q\nabla f(\mathbf{T})]_{1}=\frac{W_3e^{T_1}}{\sum_{i=1}^3 W_ie^{N_i(\mathbf{T})}},\quad [Q\nabla f(\mathbf{T})]_{2}=0,\quad [Q\nabla f(\mathbf{T})]_{3}=\frac{-W_1e^{T_3}}{\sum_{i=1}^3 W_ie^{N_i(\mathbf{T})}}
\end{equation}
and thus by Theorem \ref{cvrrwcomlike}, a.s. 
$$
\lim_{t\to\infty}[Q(\mathbf{T}(t))\nabla f(\mathbf{T}(t))]_{X_{t}}=0, \quad \int_{0}^{\infty} |\frac{\partial}{\partial T_{X_{u}}} Q\nabla f(\mathbf{T}(u))_{X_{u}} |d u<\infty
$$
and 
$$
 \limsup_{t\to\infty}g_f(X_t,T(t)) <\infty
$$
where $g_f$ is defined in (\ref{defgxtgeneral}). Therefore, as in (\ref{lilMu}), by the law of the iterated logarithm for martingales, for any $\kappa>1/2$, almost surely,
\begin{equation}
  \label{lilmartW1w3}
  \limsup_{t\to \infty} \frac{M_f(t)}{t^{\kappa}}=0, \quad \text{and thus,} \ \limsup_{t\to \infty} \frac{ W_1e^{T_3(t)}-W_3e^{T_1(t)}}{t^{\kappa}}=0
\end{equation}
By Theorem \ref{cvrrwcomlike}, almost surely,
$$
\limsup_{t\to \infty} \frac{e^{\frac{t}{3}} \left( \pi_1(\mathbf{T}(t))-\pi_3(\mathbf{T}(t))\right)}{t^{\kappa}} = \limsup_{t\to \infty}\frac{e^{\frac{t}{3}}e^{T_2(t)}}{\sum_{i=1}^3 W_ie^{N_i(\mathbf{T}(t))}} \frac{ \left( W_1e^{T_3(t)}-W_3e^{T_1(t)}\right)}{t^{\kappa} }=0
$$
Thus, for any $i\neq j \in \{1,2,3\}$, $e^{\frac{t}{3}}t^{-\kappa}|\pi_i(\mathbf{T}(t))-\pi_j(\mathbf{T}(t))|$ converges to 0, which completes the proof of the first equality in (\ref{limsup3cVRRW}) since 
$$
\max_{i=1,2,3}\{|\pi_i(\mathbf{T}(t))-\frac{1}{3}|\} \leq \max_{i\neq j}\{|\pi_i(\mathbf{T}(t))-\pi_j(\mathbf{T}(t))|\}
$$
Now we prove the second equality in (\ref{limsup3cVRRW}). We consider the optional quadratic variation $[M]_t$ of $M_f$, for a definition of the optional quadratic variation, see e.g. Chapter 11 in \cite{MR3443368}. Since $[M]_t\geq \sum_{0<s \leq t} \Delta M_s^2$, by considering all the jumps to site 2 before time $t$ and (\ref{QnablafK3}), we have
$$
\liminf_{t\to\infty}\frac{[M]_t}{t} >0, \quad \limsup_{t\to\infty}\frac{\left( W_1e^{T_3(t)}-W_3e^{T_1(t)}\right)}{\sqrt{t}}=\infty
$$
by the law of the iterated logarithm for martingales, see e.g. Theorem 4 in \cite{MR0520004}. Then, by Theorem \ref{cvrrwcomlike},
\begin{equation}
  \label{pip3sqrtinf}
  \limsup_{t\to\infty}\frac{W_1e^{N_1(\mathbf{T}(t))}-W_3e^{N_3(\mathbf{T}(t))}}{\sqrt{t W_1e^{N_1(\mathbf{T}(t))}}}=\limsup_{t\to\infty}\frac{e^{T_2(t)}}{\sqrt{W_1e^{N_1(\mathbf{T}(t))}}}\frac{\left( W_1e^{T_3(t)}-W_3e^{T_1(t)}\right)}{\sqrt{t}}=\infty
\end{equation}
which implies the second equality in (\ref{limsup3cVRRW}).

Again, let $Y_{i}(t)$ be the number of visits of the cVRRW $(X_t)_{t\geq 0}$ to $i$ up to time $t$. Since $(Y_i(t))$ changes only at jumping times $\eta_1,\eta_2,\cdots$, as in (\ref{Yeta1d-1log}), by (\ref{lilMu}) and (\ref{pip3sqrtinf}), almost surely,
$$
\limsup_{t\to \infty}\frac{\sqrt{Y_3(t)}}{\sqrt{\log Y_3(t)}}\log \frac{Y_1(t)}{Y_3(t)}=\infty, \quad \limsup_{n\to \infty}\frac{\sqrt{Y_3(\eta_n)}}{\sqrt{\log Y_3(\eta_n)}}\log \frac{Y_1(\eta_n)}{Y_3(\eta_n)}=\infty
$$
Proposition \ref{vrrwmixcvrrw} then implies that a.s.
$$
\limsup_{n\to \infty}\frac{\sqrt{n}}{\sqrt{\log n}}\log \frac{z_1(n)}{z_3(n)}=\infty,  \quad \text{and thus,} \ \limsup_{n\to \infty}\frac{\sqrt{n}}{\sqrt{\log n}}(z_1(n)-z_3(n))=\infty
$$
which completes the proof of the second equality in (\ref{limsup3VRRW}). The first one is proved similarly.
\end{proof}

For any probability measure $x=\left(x_i\right)_{i \subset V}$ on $V$, let
$$
J(x):=2 \sum_{i \in V} x_i\left(N_i(x)-H(x)\right)^2
$$
Recall $z^*$ in (\ref{zstardef}). By the definitions of $H, J$ and some direct computations, we have the following results. The proof is omitted here. 
\begin{proposition}
  \label{Hjconst}
 Let $(x_i)_{i\in V}$ be a probability measure on $G=K_d$ $(d\geq 3)$, then $H(z^*)-H(x)=\sum_{i\in V}(x_i-\frac{1}{d})^2$ and 
  $$
  J(x) = 2\sum_{i\in V}x_i(x_i-\frac{1}{d})^2-2(H(z^*)-H(x))^2
  $$
  In particular, for any $C_1<2/d<C_2$, we can find a neighborhood $B(z^*,\delta)$ of $z^*$ such that 
  $$
  C_1 (H(z^*)-H(x))\leq J(x) \leq C_2 (H(z^*)-H(x))
  $$
\end{proposition}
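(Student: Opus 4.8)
The plan is to reduce the entire statement to the elementary identity $N_i(x)=1-x_i$, which holds on $K_d$ because $x$ is a probability measure and every vertex is adjacent to all the others. From it, $H(x)=\sum_{i}x_i(1-x_i)=1-\sum_i x_i^2$, hence $H(z^*)=1-1/d$ and $H(z^*)-H(x)=\sum_i x_i^2-1/d$. Since $\sum_i(x_i-1/d)^2=\sum_i x_i^2-2/d+1/d=\sum_i x_i^2-1/d$, the first identity follows.

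For the formula for $J$, I would write $a:=H(z^*)-H(x)=\sum_i(x_i-1/d)^2$ and $b_i:=x_i-1/d$, and observe that $N_i(x)-H(x)=(1-x_i)-H(x)=(1-1/d-H(x))-(x_i-1/d)=a-b_i$. Expanding, $J(x)=2\sum_i x_i(a-b_i)^2=2a^2\sum_i x_i-4a\sum_i x_i b_i+2\sum_i x_i b_i^2$. Using $\sum_i x_i=1$ together with the crucial identity $\sum_i x_i b_i=\sum_i x_i(x_i-1/d)=\sum_i x_i^2-1/d=a$, the first two terms collapse to $-2a^2$, leaving $J(x)=2\sum_i x_i(x_i-1/d)^2-2a^2$, which is the asserted formula. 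This cancellation $\sum_i x_i b_i=a$ is the only step that is not purely mechanical.

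For the two-sided comparison near $z^*$, I would note that $\sum_i x_i(x_i-1/d)^2$ is a sum of the $x_i$ weighted by the nonnegative numbers $b_i^2$ with total weight $\sum_i b_i^2=a$, so $(\min_i x_i)\,a\le\sum_i x_i(x_i-1/d)^2\le(\max_i x_i)\,a$; combined with the formula for $J$ this gives $(2\min_i x_i-2a)\,a\le J(x)\le(2\max_i x_i-2a)\,a$. As $x\to z^*$ we have $\min_i x_i\to 1/d$, $\max_i x_i\to 1/d$ and $a\to 0$, so both coefficients tend to $2/d$; hence for $C_1<2/d<C_2$ there is $\delta>0$ such that on $B(z^*,\delta)$ one has $C_1\le 2\min_i x_i-2a$ and $2\max_i x_i-2a\le C_2$, giving $C_1 a\le J(x)\le C_2 a$ there (the case $a=0$, equivalently $x=z^*$, being trivial since then $J(x)=0$). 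There is no genuine obstacle here: the proposition is a short chain of one-line computations once $N_i(x)=1-x_i$ is substituted, and the only things to watch are the cancellation $\sum_i x_i b_i=a$ and the fact that near $z^*$ the quadratic term $-2a^2$ is negligible against the linear term $(2/d)a$, so the comparison constants are governed purely by $2/d$.
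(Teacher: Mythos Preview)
Your proof is correct: the substitution $N_i(x)=1-x_i$ on $K_d$ reduces both identities to short algebra, the key cancellation $\sum_i x_i b_i=a$ is verified, and the sandwich $(2\min_i x_i-2a)\,a\le J(x)\le(2\max_i x_i-2a)\,a$ cleanly yields the two-sided comparison near $z^*$. The paper omits its own proof of this proposition (``The proof is omitted here''), so there is no alternative argument to compare against; your write-up fills that gap.
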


The definition of $J(x)$ is motivated by the following observation: Informally, by Theorem \ref{fTasconv}, $\mathbf{T}(t)$ is “approximately" governed by $\mathbf{T}^{\prime}=\pi(\mathbf{T})$, then $H(t):=H(\pi(\mathbf{T}(t)))$ is governed by 
$$\frac{d}{dt} (H(z^*)-H(t)) =-2\sum \pi_i(\mathbf{T})\left(N_i(\pi(\mathbf{T}))-H(\pi(\mathbf{T}))\right)^2\approx -\frac{2}{d}(H(z^*)-H(t))$$
when $\pi(\mathbf{T}(t))$ is close to $z$ by Proposition \ref{Hjconst}. This would allow us to estimate to rate of convergence $H(z^*)-H(t) \to 0$.

 We write $J(t):=J(\pi(\mathbf{T}(t)))$ for simplicity, and define $D(t):=\log \sum_{x\in V}W_xe^{N_x(\mathbf{T}(t))}$. With a slight abuse of notation, we let $C$ denote a universal positive constant, and let $C(\omega)$ denote a positive constant depending on the outcome $\omega$ but not on time $t$.

\begin{proof}[Proof of Theorem \ref{d-1optimal}]
By Theorem \ref{fTasconv} with $f(\mathbf{T}):=H(\pi(\mathbf{T}))$,
\begin{equation}
  \label{stoappequHinft}
  \begin{aligned}
    H(z^*)-H(t)&=\int_t^\infty J(u)du -Q\nabla H\circ \pi (\mathbf{T}(t))_{X_{t}}+M_{H\circ \pi}(\infty)-M_{H\circ \pi}(t)\\
      &-\int_{t}^{\infty} \frac{\partial}{\partial T_{X_{u}}} Q\nabla H\circ \pi(\mathbf{T}(u))_{X_{u}} d u 
  \end{aligned}
\end{equation}
By Proposition \ref{Hjconst}, for any $i\in V$,
  $$
  \partial_{T_i} H(\pi(\mathbf{T}))=\pi_i(\mathbf{T})\left(H(\pi(\mathbf{T}))-H\left(z^*\right)+\pi_i(\mathbf{T})-\frac{1}{d}\right)
  $$
Therefore, by Lemma \ref{QKd} and Proposition \ref{Hjconst}, for some constant $C$,
\begin{equation}
  \label{gHbdCh}
  g_{H \circ \pi}\left(X_u, T(u)\right) \leq \frac{C\left(H\left(z^*\right)-H(u)\right)}{\sum_{x \in V} W_x e^{N_x(\mathbf{T})}}
\end{equation}
For $h_1<2/d$ and large $t_1$, with positive probability, $H(z^*)-H(t_1)\in (e^{-h_1t_1}/2,e^{-h_1t_1})$. Let $h_2<h_1$, we define a stopping time $T_H$ by 
  $$T_H:=\inf\{s\geq t_1: H(z^*)-H(s)\geq e^{-h_2 s}\}$$
By Doob's inequality in $L^2$ and (\ref{gHbdCh}), for $n \geq t_1$, for the stopped martingale $M_{H\circ \pi}^{T_H}$ where $M_{H\circ \pi}^{T_H}(t):=M_{H\circ \pi}(T_H\wedge t)$, we have 
  \begin{equation}
    \label{bdMstopped}
    \begin{aligned}
        \mathbb{P}\left(\sup _{s\geq n}\left|M_{H\circ \pi}^{T_H}(s)-M_{H\circ \pi}^{T_H}(n)\right| \geq  e^{-\frac{nh_3}{2}}\right) &\leq 4e^{nh_3} \EE \int_n^{T_H}g_{H\circ \pi}(X_u,T(u))du \\
        &\leq C e^{-n(h_2+1-\frac{1}{d}-h_3)}
    \end{aligned}
  \end{equation}
  where $h_3<h_2+1-\frac{1}{d}$ is a constant. Note that for any $n\leq t<n+1$,
$$\left|M_{H \circ \pi}^{T^m}(\infty)-M_{H \circ \pi}^{T^m}(t)\right| \leq\left|M_{H \circ \pi}^{T^m}(\infty)-M_{H \circ \pi}^{T^m}(n)\right|+\left|M_{H \circ \pi}^{T^m}(t)-M_{H \circ \pi}^{T^m}(n)\right|$$
Thus, by (\ref{bdMstopped}), conditional on $\FF_{t_1}$, with probability $1-o(t_1)$,
$$
\left|M_{H\circ \pi}^{T_H}(\infty)-M_{H\circ \pi}^{T_H}(t)\right|\leq C e^{-\frac{h_3t}{2}}, \quad \forall t\geq t_1
$$
We denote this event by $E_1$. Thus, for $t_1\leq t< T_H$, by Lemma \ref{QKd}, on $E_1$,
\begin{equation}
  \label{remainbdh3}
  |-Q\nabla H\circ \pi(\mathbf{T}(t))_{X_{t}}+M_{H\circ \pi}(\infty)-M_{H\circ \pi}(t)-\int_{t}^{\infty} \frac{\partial}{\partial T_{X_{u}}} Q\nabla H\circ \pi(\mathbf{T}(u))_{X_{u}} d u| \leq C e^{-\frac{h_3t}{2}}
\end{equation}
By possibly choosing a larger $t_1$, we may assume that $\pi(\mathbf{T}(t_1)) \in B(z^*,\delta/2)$ where $\delta$ is as in  Proposition \ref{Hjconst} with $C_1\in (h_1,2/d)$ and $C_2\in (2/d,h_3/2)$ which is possible since $2/d<(d-1)/d$ for $d>3$. We write $R(u):=H(z^*)-H(u)$, then by (\ref{stoappequHinft}), on $E_1$, for $t\in [t_1,T_H)$,
 $$
  R(t)\geq C_1\int_t^{\infty}R(u)du - C e^{-\frac{h_3t}{2}}
    $$
Write $y(t):=e^{C_1 t} \int_t^{\infty} R(u) d u$. We have $y^{\prime}(t) \leq C e^{-(\frac{ h_3}{2}-C_1)t}$. Thus, $y(t)$ is upper bounded on $[t_1,T_H)$, and $
\int_t^{\infty}R(u)du \leq C e^{-C_1t}$. Then, by (\ref{stoappequHinft}) and (\ref{remainbdh3}), for $t\in [t_1,T_H)$,
\begin{equation}
  \label{RtbdC2C}
  H\left(z^*\right)-H(t)=R(t) \leq C_2 \int_t^{\infty} R(u) d u+C e^{-\frac{h_3t}{2}} \leq C e^{-C_1 t}
\end{equation}
By possibly choosing a larger $t_1$, we see that $T_H=\infty$ on $E_1$. Now, write $y_1(t):=e^{C_2 t} \int_t^{\infty} R(u) d u$. On $E_1$, for $t\geq t_1$
\begin{equation}
  \label{ytlowbd}
  y_1^{\prime}(t) \geq -C e^{-(\frac{h_3}{2}-C_2)t}; \quad  y(t) \geq y(t_1) - \frac{C}{\frac{h_3}{2}-C_2}  e^{-(\frac{h_3}{2}-C_2)t_1}
  \end{equation}
 Since $R(t_1)>e^{-h_1t_1}/2$, by  (\ref{RtbdC2C}) and possibly choosing a larger $t_1$, the right-hand side of the second inequality in (\ref{ytlowbd}) is positive. We denote it by $C_1(t_1)$. In particular, on $E_1$,
  \begin{equation}
   \label{intRlowbound}
 \int_t^{\infty}R(u)du \geq C_1(t_1) e^{-C_2t} , \quad t\geq t_1
 \end{equation}
On $E_1$, by (\ref{remainbdh3}), we have, for some positive constant $C_2(t_1)$,
\begin{equation}
  \label{RtlwC2}
  R(t)\geq C_1 \int_t^{\infty}R(u)du - C  e^{-\frac{h_3t}{2}}\geq C_2(t_1) e^{-C_2t}, \quad t\geq t_1
\end{equation}
Since $C_2$ could be arbitrarily close to $2/d$, the first inequality in (\ref{opticVRRW}) follows from Proposition \ref{Hjconst}.

Again, we denote by $Y_{x}(t)$ the number of visits of $(X_t)_{t\geq 0}$ to $x\in V$ up to time $t$. By (\ref{lilMu}), a.s. 
$$
|\frac{Y_i(t)}{\sum_{x\in V} Y_x(t)}-\frac{W_ie^{N_i(\mathbf{T}(t))}}{\sum_{x\in V} W_x e^{N_x(\mathbf{T}(t))}}|\leq C(\omega)e^{-\frac{2}{5}(1-\frac{1}{d})t}
$$
By (\ref{RtlwC2}), on $E_1$, 
$$
\max_{i\in V} \{|\frac{W_ie^{N_i(\mathbf{T}(t))}}{\sum_{x\in V} W_x e^{N_x(\mathbf{T}(t))}}-\frac{1}{d}|\} \geq \sqrt{\frac{C_2(t_1)}{3}}e^{-\frac{C_2t}{2}}
$$
Since $d>3$, we may choose $C_2$ to be close to $2/d$ such that $\frac{C_2}{2}<\min\{\frac{2}{5}(1-\frac{1}{d}), \frac{1}{d}+\frac{(d-1)\varepsilon}{d}\}$. Then, noting that 
$$
|\frac{Y_i(t)}{\sum_{x\in V} Y_x(t)}-\frac{1}{d}|\geq |\frac{W_ie^{N_i(\mathbf{T}(t))}}{\sum_{x\in V} W_x e^{N_x(\mathbf{T}(t))}}-\frac{1}{d}| -|\frac{Y_i(t)}{\sum_{x\in V} Y_x(t)}-\frac{W_ie^{N_i(\mathbf{T}(t))}}{\sum_{x\in V} W_x e^{N_x(\mathbf{T}(t))}}|
$$
we have, by Theorem \ref{cvrrwcomlike} and (\ref{llnYW}), on $E_1$, for $t\geq t_1$,
$$ \lim_{t\to\infty} (\sum_{j\in V}Y_j(t))^{\frac{1}{d-1}+\varepsilon} \max_{i\in V}\{ |\frac{Y_i(t)}{\sum_{x\in V} Y_x(t)}-\frac{1}{d}|\}=\infty$$
Then, the second inequality in (\ref{opticVRRW}) now follows from Proposition \ref{vrrwmixcvrrw}.

\end{proof}

\bibliographystyle{plain}
\bibliography{math_ref}

\end{document}